\def\supp{\operatorname{supp}}
\def\sinc{\operatorname{sinc}}
\def\N{{{\Bbb N}}}
\def\Z{{{\Bbb Z}}}
\def\R{{\Bbb R}}
\def\C{{\Bbb C}}
\def\l{{\lambda }}
\def\a{{\alpha }}
\def\D{{\Delta }}
\def\a{{\alpha}}
\def\b{{\beta}}
\def\d{{\delta}}
\def\s{{\sigma}}
\def\vp{{\varphi}}
\def\t{{\theta }}
\def\g{{\gamma }}
\def\w{{\omega }}
\def\){\right)}
\def\({\left(}
\def\supp{\operatorname{supp}}
\numberwithin{equation}{section}
\newtheorem{theorem}{Theorem}[section]
\newtheorem{corollary}[theorem]{Corollary}
\newtheorem{lemma}[theorem]{Lemma}
\newtheorem{proposition}[theorem]{Proposition}
\newtheorem{remark}[theorem]{Remark}
\begin{document}

\title[]{Special measures of smoothness for \\ approximation by  sampling operators in $L_p(\R^d)$}

\author[Yurii
Kolomoitsev]{Yurii
Kolomoitsev$^{\text a, *}$}
\address{Institute for Numerical and Applied Mathematics, G\"ottingen University, Lotzestr. 16-18, 37083 G\"ottingen, Germany
}
\email{kolomoitsev@math-uni.goettingen.de}

\thanks{$^\text{a}$Institute for Numerical and Applied Mathematics, G\"ottingen University, Lotzestr. 16-18, 37083 G\"ottingen, Germany}

\thanks{$^*$Supported by the German Research Foundation, project KO 5804/3-1}


\thanks{E-mail address: kolomoitsev@math-uni.goettingen.de}

\date{\today}
\subjclass[2010]{41A05, 41A15, 41A17, 41A25, 41A27} \keywords{Sampling operators, Integral and averaged moduli of smoothness, $K$-functionals, Direct and inverse error estimates, $L_p$-spaces}

\begin{abstract}
Traditional measures of smoothness  often fail to provide accurate $L_p$-error estimates for approximation by sampling or interpolation operators, especially for functions with low smoothness. To address this issue, we introduce a modified measure of smoothness that incorporates the local behavior of a function at the sampling points through the use of averaged operators.
With this new tool, we obtain matching direct and inverse error estimates for a wide class of sampling operators and functions in
$L_p$ spaces. Additionally, we derive a criterion for the convergence of sampling operators in $L_p$, identify conditions that ensure  the exact rate of approximation, construct realizations of $K$-functionals based on these operators, and study the smoothness properties of sampling operators. We also demonstrate how our results apply to several well-known operators, including the classical Whittaker-Shannon sampling operator, sampling operators generated by $B$-splines, and those based on the Gaussian.
\end{abstract}

\maketitle

\section{Introduction}

The present paper investigates the approximation properties of general linear sampling operators in the spaces  $L_p(\R^d)$, $1\le p<\infty$. Numerous studies have been devoted to the approximation of functions in these spaces using sampling operators. The works most closely related to the present investigation include, for example,~\cite{BBSV06}, \cite{BMSVV14}, \cite{B88}, \cite{C23}, \cite{H89}, \cite{KL23}, \cite{KP21}, \cite{KS20}, \cite{KS21}, \cite{Sk1}, \cite{SS00}. In most of these studies, it is typically assumed that the function
$f$ to be approximated is sufficiently smooth or, in more general settings, that $f$ is continuous and bounded.
In this context, several types of direct error estimates have been established using different smoothness characteristics such as membership in  Besov and Sobolev spaces, special moduli of smoothness, or the decay of the Fourier transform.
To illustrate this, we present some of these estimates for one of the most classical and widely studied sampling and interpolation operators, given by
\begin{equation*}
  {S}_{\s}f(x):=\sum_{k\in\Z}  f(\s^{-1}k)\, {\rm sinc}(\s x-k), \quad{\rm sinc}(x):=\frac{\sin\pi x}{\pi x}.
\end{equation*}
The following property is known (see, e.g., \cite{SS00} and~\cite{Si92}): Let $\a>1/p$ and $1<p<\infty$.
Then a function $f$ belongs to the Besov space $B_{p,\infty}^\a(\R)$ \footnote{As usual, we write $f\in B_{p,q}^\a(\R)$ if $f\in L_p(\R)$ and $(\int_{1}^\infty (\frac{\w_{[\a]+1}(f,t)_p}{t^{\a}})^q\frac{dt}{t})^{1/q}<\infty$ with the standard modification when $q=\infty$.} if and only if $f$ is continuous and satisfies
$$
\Vert f-{S}_\s f\Vert_{L_p(\R)}= \mathcal{O}(\s^{-\a})\quad\text{as}\quad \s\to\infty.
$$
This raises the question of what can be said about this property in the case $\a\le 1/p$.
It is known, for example, from~\cite{SS00} that if $f$ belongs to the Besov space $B_{p,1}^{1/p}(\R)$ where $1<p<\infty$, then
\begin{equation}\label{S1+}
  \Vert f-{S}_\s f\Vert_{L_p(\R)}\le \frac C{\s^{1/p}}\int_{0}^{1/\s}\frac{\omega_r(f,t)_p}{t^{1/p}}\frac{{\rm d} t}{t},  \quad r=1,2,\dots,
\end{equation}
where $\omega_r(f,t)_p$ is the classical modulus of smoothness of order $r \in \N$, and the constant $C$ is independent of $f$ and $\s$, see also~\cite{KS21}.
More general error estimates can be obtained in terms of the $\tau$-modulus of smoothness. Typical results in this direction, which in particular imply~\eqref{S1+}, were established in~\cite{BBSV06} and \cite{SS00}. It was shown that for every bounded function $f\in L_p(\R)$, $1<p<\infty$, satisfying certain additional natural conditions, the following inequality holds:
\begin{equation}\label{S2}
  \Vert f-{S}_\s f\Vert_{L_p(\R)}\le C\tau_s(f,\s^{-1})_p, \quad s=1,2,\dots,
\end{equation}
where $\tau_s(f,\s^{-1})_p$ is the so-called $\tau$-modulus, also known as the averaged modulus of smoothness, and the constant $C$ is independent of $f$ and $\s$. 
Despite the relatively simple form of the error estimates~\eqref{S1+} and~\eqref{S2} and their applicability to a wide range of functions, these inequalities are generally not sharp in the sense of inverse estimates for functions with low smoothness, specifically when $\a\le 1/p$. See, for example,~\cite{KL23}, \cite{BXZ92}, and~\cite{WS03} for a related discussion in the periodic case.

Now let us consider a continuous analogue of the discrete operator $S_\s$ defined by
$$
M_\s f(x):=\s \int_\R f(y)\sinc(\s(t-y))dy.
$$
The following properties are well-known (see, e.g.,~\cite{SS00}): Let $f\in L_p(\R)$, $1<p<\infty$, and $s\in \N$.
Then there exists a constant $C$, independent of $f$ and $\s$, such that
\begin{equation}\label{intrleJB.1}
\|f-M_\s f\|_{L_p(\R)}\le C\w_s(f,\s^{-1})_p
\end{equation}
and
      \begin{equation}\label{intrleJB.2}
         \w_s(f,\s^{-1})_p\le \frac{C}{\s^s}\sum_{\nu=0}^{[\s]}(\nu+1)^{s-1}\|f-M_\nu f\|_{L_p(\R)}.
      \end{equation}
In particular, we have $f\in B_{p,\infty}^\a(\R)$, $\a>0$, if and only if $f\in L_p(\R)$ and
$$
\Vert f-M_\s f\Vert_{L_p(\R)}= \mathcal{O}(\s^{-\a}).
$$
It follows from the inverse approximation inequality (see, e.g., Lemma~\ref{leJB+} below) that if the operator $M_\nu$ is replaced by $S_\nu$, then the same inequality remains valid for any function $f\in L_p(\R)$ for which the corresponding sampling operator is well defined. At the same time, it is clear that inequality~\eqref{intrleJB.1} does not hold when the sampling operator $S_\s$ is used instead of $M_\s$.
Furthermore, although we have the direct estimate~\eqref{S2} in terms of the $\tau$-modulus of smoothness, the corresponding inverse estimate, analogue to~\eqref{intrleJB.2}, does not hold when both the sampling operator and the $\tau$-modulus are used.
This implies that neither the classical modulus of smoothness nor $\tau$-modulus is suitable for establishing direct and inverse approximation inequalities for sampling operators. As a result, they are not appropriate for characterizing functions that satisfy the estimate $\|f-S_\s f\|_{L_p(\R)}=\mathcal{O}(\s^{-\a})$ when $\a\le 1/p$. See also~\cite{KL23} for a more detailed discussion of this issue in the periodic setting.

The main purpose of this paper is to propose an improved approach to describing function smoothness in the context of approximation by sampling operators, aiming to overcome the known  limitations of conventional measures of smoothness.
To this end, we follow the ideas of~\cite{KL23} and propose a modification of the classical modulus of smoothness $\w_s(f,\d)_p$ by using averaged operators to incorporate information about the local behavior of the function $f$ at sampling points. Using this modification, we obtain matching direct and inverse approximation estimates for general linear sampling operators valid for every function in $L_p$ for which the corresponding operator is well defined. We also derive a criterion for their convergence, identify conditions that ensure the exact rate of convergence in $L_p$, construct realizations of
$K$-functionals based on these operators, and study the smoothness properties of sampling operators.
In particular, as a corollary of the main results of this paper,  we show that for any $f\in L_p(\R)$, $1<p<\infty$, satisfying $\sum_k |f(\s^{-1}k)|^p<\infty$, and for $s,r\in\N$ with $s\le 2r$, and $\a\in (0,s)$, the following properties are equivalent:

\medskip

  \begin{enumerate}
    \item[$(i)$]  $\displaystyle\|f-S_\s^{} f\|_{L_p(\R)}=\mathcal{O}(\s^{-\a})$,
    \item[$(ii)$]  $\displaystyle \(\s^{-1}\sum_{k\in \Z}|f_{1/2\s,\,r}(\s^{-1}k)-f(\s^{-1}k)|^p\)^{1/p}+\w_s(f,\s^{-1})_{L_p(\R)}=\mathcal{O}(\s^{-\a})$,
    \item[$(iii)$] $\displaystyle\|(S_\s^{} f)^{(s)}\|_{L_p(\R)}=\mathcal{O}(\s^{s-\a})$,
  \end{enumerate}

\medskip

\noindent where $f_{\d,r}$ denotes the averaged operator of order $r$, see~\eqref{SB3} for its definition.

This paper is organized as follows. Sections~2 and~3 provide the necessary definitions and auxiliary results needed to formulate the main theorems. In Section~4, we present the main results of the paper along with several important corollaries. Section~5 is devoted to the proofs of these results. In Section~6, we illustrate the theory with applications to several specific operators, including the classical Whittaker–Shannon sampling operator, sampling operators generated by $B$-splines, and those based on the Gaussian function.


\medskip

\textbf{Notation.} By $L_p(\R^d)$, $1\le p<\infty$, we denote the space of all finite-valued, measurable functions $f : \R^d \to \R$ such that
$$
\|f\|_p=\|f\|_{L_p(\R^d)}=\(\int_{\R^d} |f(x)|^pdx\)^{1/p}<\infty.
$$
We emphasize that we do not identify functions that are equal almost everywhere.  That is, each function in $L_p(\R^d)$ is determined by its values at each point in $\R^d$.
As usual, $W_p^r(\R^d)$ with $r\in \N$ and $1\le p<\infty$, denotes the Sobolev space equipped with the norm $\|f\|_{W_p^r}=\|f\|_p+|f|_{W_p^r}$,
where the semi-norm $|f|_{W_p^r}$ is given by
$$
|f|_{W_p^r}=\sum_{|\a|_1=r}\|D^\a f\|_p,\quad D^\a=\frac{\partial^{|\a|_1}}{\partial x_1^{\a_1}\dots\partial x_d^{\a_d}}.
$$
In what follows, $X_\s$ with some  $\s>0$, denotes a finite or infinite set of points in $\R^d$.
We assume that the points in $X_\s$ satisfy the following property: there exists $\g\in (0,1]$, independent of $\s$, such that
\begin{equation*}
  \min_{\xi,\xi'\in X_\s,\, \xi\neq\xi'}|\xi-\xi'|>\frac{2\g}{\s}.
\end{equation*}
For a given function $f$ and the set of points  $X_\s\subset\R^d$, the discrete semi-norm $\|f\|_{\ell_p(X_\s)}$, $1\le p<\infty$, is defined by
$$
\|f\|_{\ell_p(X_\s)}=\(\frac1{\s^d}\sum_{\xi\in X_\s}|f(\xi)|^p\)^{1/p}.
$$
We will write that $f\in \ell_p(X_\s)$ if $\|f\|_{\ell_p(X_\s)}<\infty$.
The class of band-limited functions  $\mathcal{B}_p^\s(\R^d)$, $\s>0$, in the space $L_p(\R^d)$, is given by
%
$$
\mathcal{B}_p^\s(\R^d)=\left\{\varphi  \in L_p(\R^d)\cap L_1(\R^d)\,:\,\supp\;\widehat{\varphi} \subset [-\s,\s]^d\right\},
$$
where
$$
\widehat{\vp}(x) = \int_{\R^d} \vp(y) e^{-2\pi i(x,y)} dy
$$
is the Fourier transform of $\vp$.
The open ball of radius $\d$ centered at a point $x\in \R^d$ is denoted by $B_\d(x)$, i.e.,
$$
B_\d(x)=\{y\in \R^d\,:\, |y-x|<\d\}\quad\text{and}\quad B_\d=B_\d({0}).
$$
Throughout the paper, we use the notation
$
\, A \lesssim B,
$
with $A,B\ge 0$, for the estimate
$\, A \le C\, B,$ where $\, C$ is a positive constant independent of
the essential variables in $\, A$ and $\, B$ (usually, $f$, $\s$, and $\d$).
If $\, A \lesssim B$
and $\, B \lesssim A$ simultaneously, we write $\, A \asymp B$ and say that $\, A$
is equivalent to $\, B$.

\section{Main definitions and preliminary results}

The main object of this paper is the linear sampling operator $G_\s$, defined by
\begin{equation*}
  G_\s f(x)=\sum_{\xi\in X_\s} f\big(\xi)\psi_{\xi,\s}(x),
\end{equation*}
where 
$\psi_{\xi,\s}$ are appropriate kernel functions (e.g., band-limited functions, splines, Gaussian, etc.).
Let $1\le p<\infty$ be given. In what follows, we impose the following general assumptions on the operators $G_\s$:

\begin{equation}\label{c1}\tag{$A_1$}
  \|G_\s f\|_p\le \kappa_1\|f\|_{\ell_p(X_\s)},\quad f\in L_p(\R^d)\cap \ell_p(X_\s);
\end{equation}
\begin{equation}\label{c1'}\tag{$A_2$}
  \kappa_2\|f\|_{\ell_p(X_\s)}\le \|G_\s f\|_p,\quad f\in L_p(\R^d)\cap \ell_p(X_\s);
\end{equation}
\noindent
\begin{equation}\label{c2'}\tag{$A_3$}
\text{there exists}\,\, s\in \N\,\, \text{and}\,\, \l \in (0,1]\,\, \text{such that}
\\
  \phantom{\|g-G_\s g\|_p\le \kappa_3\s^{-s}|g|_{W_p^s}, \quad g\in \mathcal{B}_p^{\l\s}(\R^d),}
\end{equation}
$$
\|g-G_\s g\|_p\le \kappa_3\s^{-s}|g|_{W_p^s}, \quad g\in \mathcal{B}_p^{\l\s}(\R^d),
$$
where   
$\kappa_i=\kappa_i(p,d)$, $i=1,2$, and  $\kappa_3=\kappa_3(\l, s,p,d)$ are some positive constants.

Note that in assumption~\eqref{c2'}, we do not require $s>d/p$, as it is usual in the sampling and interpolation theory.

We formulate our main results in terms of moduli of smoothness and special averaged operators.
Recall that the (integral) modulus of smoothness of a function $f\in L_p(\R^d)$, $1\le p<\infty$,
is given by
\begin{equation}\label{eqmod1}
    \w_r(f,\d)_p=\sup_{|h|<\d} \Vert \D_h^r f\Vert_p,
\end{equation}
where
$$
\D_h^r f(x)=\sum_{\nu=0}^r\binom{r}{\nu}(-1)^{\nu} f(x+(r-\nu)h),
$$
$\binom{r}{\nu}=\frac{r (r-1)\dots (r-\nu+1)}{\nu!},\quad \binom{r}{0}=1$. 
For $f\in L_1^{\rm loc}(\R^d)$, $\d>0$, and $r\in \N$, we define the special averaged operator by
\begin{equation}\label{SB3}
  f_{\d,r}(x)=-\frac{2}{\binom{2r}{r}}\sum_{j=1}^r (-1)^j\binom{2r}{r-j}f_{\frac{\d j}{r}}(x),
\end{equation}
where
\begin{equation*}
  f_\d(x)=\frac1{|B_\d|}\int_{B_\d(x)} f(y)dy.
\end{equation*}
By the standard calculations, we have
\begin{equation}\label{SB1}
  f(x)-f_{\d,r}(x)=\frac1{c_r|B_1|}\int_{B_1} \D_{\frac{\d}{r}y}^{2r}f\(x\)dy,
\end{equation}
where $c_r=(-1)^{r+1}\binom{2r}{r}$ is a normalized constant.

Recall that
\begin{equation}\label{st1}
  C_2\w_{2r}(f,\d)_p\le \|f_{\d,r}-f\|_p\le C_1\w_{2r}(f,\d)_p,
\end{equation}
where $C_1$ and $C_2$ are some positive constants depending only on $r$ and $d$.
Here, the upper estimate follows directly from Minkowski's inequality. The lower estimate can be proved using the methods of Fourier multipliers, see, e.g.,~\cite[8.2.5]{TB}, \cite{K17}.

It is known that the modulus of smoothness~\eqref{eqmod1}, and therefore the quantity $\|f_{\d,r}-f\|_{p}$, are not suitable for obtaining sharp estimates of the $L_p$-error of approximation by sampling operators, see, e.g., \cite{WS03}, \cite{KL23}, and~\cite{BXZ92} for the corresponding result in the periodic case. However, if we replace the $L_p$-norm with the discrete semi-norm $\|\cdot \|_{\ell_p(X_\s)}$, then the expression $\|f_{\d,r}-f\|_{\ell_p(X_\s)}$ becomes the correct quantity for studying the approximation properties of sampling operators in $L_p$ spaces, see~\cite{KL23} for the case of univariate periodic functions.
In the next result, we compare the quantity $\|f_{\d,r}-f\|_{\ell_p(X_\s)}$ with the averaged moduli of smoothness ($\tau$-moduli), which are commonly used to investigate approximation properties of sampling operators and quadrature formulas (see, e.g.,~\cite{SP}, \cite{CZ99}, \cite{BBSV06}, \cite{BMSVV14}, \cite{H89}). Recall that the averaged modulus of smoothness is defined by
$$
\tau_r(f,\d)_p=\|\w_r(f,\cdot,\d)\|_p=\(\int_{\R^d} (\w_r(f,x,\d))^pdx\)^{1/p},
$$
where
$$
\w_r(f,x,\d)=\sup\left\{|\D_h^r f(t)|\,:\,t,t+rh\in B_{r\d/2}(x)\right\}
$$
is the local modulus of smoothness of $f$.

\begin{proposition}\label{cor1+}
Let $f$ be a bounded function in $L_p(\R^d)$, $1\le p<\infty$, $r\in \N$, and $0<\d\le \g/\s$.
Then
\begin{equation}\label{EQ0}
    \|f_{\d,r}-f\|_{\ell_p(X_\s)}\le \frac{C}{(\d \s)^{d/p}}\tau_{2r}(f,\d)_p.
 \end{equation}
In particular, if $f\in C(\R^d)$ and $2r>d/p$, then
\begin{equation}\label{tauint}
  \|f_{\d,r}-f\|_{\ell_p(X_\s)}\le \frac{C}{\s^{d/p}}\int_0^\d\frac{\omega_{2r}(f,t)_p}{t^{d/p}}\frac{dt}{t},
\end{equation}
where the constant $C$ depends only on $r$, $\g$, and $d$.
\end{proposition}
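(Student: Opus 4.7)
The strategy is to start from the integral representation~\eqref{SB1}, bound the $2r$-th differences pointwise by the local modulus of smoothness at a shifted center, and then exploit the separation of $X_\s$ to sum over pairwise disjoint balls.

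First, I would use~\eqref{SB1}: for each $y\in B_1$, the evaluation points of $\D^{2r}_{\d y/r}f(x)$ are $x+j\d y/r$, $j=0,\dots,2r$, which all lie in $B_{\d|y|}(x+\d y)\subset B_{r\d}(x+\d y)$. The definition of the local modulus then yields
$$
|\D^{2r}_{\d y/r}f(x)|\le \w_{2r}(f,x+\d y,\d).
$$
Integrating over $B_1$ and changing variables $u=x+\d y$ gives
$$
|f(x)-f_{\d,r}(x)|\lesssim \frac{1}{|B_\d|}\int_{B_\d(x)}\w_{2r}(f,u,\d)\,du.
$$

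Next, Jensen's inequality (applied to the normalized measure on $B_\d(x)$) upgrades this to
$$
|f(x)-f_{\d,r}(x)|^p\lesssim \frac{1}{|B_\d|}\int_{B_\d(x)}\w_{2r}(f,u,\d)^p\,du.
$$
Setting $x=\xi$ and summing over $\xi\in X_\s$, the separation $\min|\xi-\xi'|>2\g/\s$ together with $\d\le\g/\s$ ensures the balls $B_\d(\xi)$ are pairwise disjoint, hence their contributions add up to at most $\tau_{2r}(f,\d)_p^p$. Dividing by $\s^d$ and taking $p$-th roots produces~\eqref{EQ0}.

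For~\eqref{tauint}, I would combine~\eqref{EQ0} with a standard Sendov--Popov type inequality bounding the $\tau$-modulus by an integral of the classical modulus: under $f\in C(\R^d)$ and $2r>d/p$,
$$
\tau_{2r}(f,\d)_p\lesssim \d^{d/p}\int_0^\d \frac{\w_{2r}(f,t)_p}{t^{d/p}}\,\frac{dt}{t}.
$$
Substitution into~\eqref{EQ0} gives~\eqref{tauint}, since the factor $\d^{d/p}$ cancels $(\d\s)^{-d/p}$.

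The main obstacle I anticipate is the geometric step of locating the right center for the local modulus: one must choose the midpoint $x+\d y$ (rather than $x$) so that both endpoints, and all intermediate evaluation points, of $\D^{2r}_{\d y/r}f(x)$ lie inside a ball whose radius is commensurate with $\d$. Once this is set up, the rest is largely routine: Jensen's inequality distributes the $p$-th power, the disjointness of $\{B_\d(\xi)\}_{\xi\in X_\s}$ (guaranteed by $\d\le\g/\s$ and the separation of $X_\s$) converts the discrete sum into a single global integral, and the auxiliary Sendov--Popov estimate provides the final passage to the classical modulus needed for~\eqref{tauint}.
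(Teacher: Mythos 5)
Your proof is correct and follows essentially the same route as the paper: bound $|f_{\d,r}(\xi)-f(\xi)|$ via the representation \eqref{SB1} by a local modulus of smoothness, average its $p$-th power over a small ball around $\xi$, and use the separation of $X_\s$ together with $\d\le\g/\s$ to sum the disjoint contributions into $\tau_{2r}(f,\cdot)_p^p$, finishing \eqref{tauint} with property $(f')$. Your variant of centering the local modulus at the midpoint $x+\d y$ is in fact slightly cleaner than the paper's comparison $\w_{2r}(f,\xi,2\d/r)\le\w_{2r}(f,\t,4\d/r)$ for $\t\in B_{2\d}(\xi)$, since it avoids the final rescaling of the argument of the $\tau$-modulus, but the underlying idea is identical.
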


\begin{proof}
Let $\xi\in X_\s$.  It follows from~\eqref{SB1} that
\begin{equation}\label{EQ1}
\begin{split}
    |f_{\d,r}(\xi)-f(\xi)|&\le \frac1{c_r|B_1|}\int_{B_1}|\D_{\frac{\d y}r}^{2r}f(\xi)|dy.
\end{split}
\end{equation}
For any $\t\in B_{2\d}(\xi)$, we have $B_{2\d}(\xi)\subset B_{4\d}(\t)$. Thus, for all $y\in B_1$, we get the following estimates
\begin{equation*}
  \begin{split}
      |\D_{\frac{\d y}r}^{2r}f(\xi)|\le \w_{2r}(f,\xi,2\d/r)\le \w_{2r}(f,\t,4\d/r).
  \end{split}
\end{equation*}
Finally,  from~\eqref{EQ1}, we derive
\begin{equation*}
  \begin{split}
      \frac1{\s^d} \sum_{\xi\in X_\s}|f_{\d,r}(\xi)-f(\xi)|^p
      &\le \frac{C}{(\d \s)^{d}} \sum_{\xi \in X_\s}\int_{B_{4\d}(\t)}\(\w_{2r}(f,\t,4\d/r)\)^p\,d\t\\
&\le \frac{C}{(\d \s)^{d}}\tau_{2r}(f,4\d/r)_p^p,
  \end{split}
\end{equation*}
which, together with property $(c')$ below of averaged moduli of smoothness, implies~\eqref{EQ0}.

Inequality~\eqref{tauint} follows directly from~\eqref{EQ0} and estimate~\eqref{taum} given in the next section.
\end{proof}

\section{Auxiliary results}\label{secAR}
\subsection{Moduli of smoothness and the error of best approximation}
Recall several basic properties of the moduli of smoothness (see, e.g.,~\cite[Ch.~2]{DL}, \cite[Ch.~4]{TB} for the case $d=1$ and~\cite{KT21}, \cite{KT20_2} for $d\ge 2$).
For  $f,g\in L_p(\R^d)$, $1\le p<\infty$, $\d>0$, and $r\in \N$, we have
\begin{itemize}
\item[$(a)$]
$ \w_r(f,\d)_p$ is a non-negative non-decreasing function of $\d$ such that
$\lim\limits_{\d\to 0+} \w_r(f,\delta)_p=0$;
\item[$(b)$]
$\w_r(f+g,\d)_p\le \w_r(f,\d)_p+\w_r(g,\d)_p;$

\item[$(c)$]  $ \w_{r+1}(f,\d)_p\le 2\w_r(f,\d)_p$;

\item[$(d)$]
for $\l>0$,
        $$\w_r(f,\lambda \delta)_p\le (1+\l)^r  \w_r(f,\d)_p;$$

\item[$(e)$] $\w_r(f,\d)_p\le \d^r |f|_{W_p^r}$ for all $f\in W_p^r(\R^d)$.
\end{itemize}

We also need a well-known lemma about the equivalence of the modulus of smoothness and the corresponding $K$-functional, see~\cite[Ch.~5]{BeSh} and~\cite{JS77}.  Recall that the $K$-functional of the pair $(L_p(\R^d), W_p^s(\R^d))$ is defined by 
$$
K_s(f,\d)_p=\inf_{g\in W_p^s(\R^d)}\{\|f-g\|_p+\d^s |g|_{W_p^s(\R^d)}\}.
$$

\begin{lemma}\label{leKM}
  Let $f\in L_p(\R^d)$, $1\le p<\infty$, and $s\in \N$. Then
  \begin{equation}\label{eqvK}
 K_s(f,\d)_p\asymp \omega_s(f,\d)_p,\quad \d>0,
\end{equation}
where $\asymp$ is a two-sided inequality with positive constants independent of $f$ and $n$.
\end{lemma}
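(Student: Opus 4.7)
The plan is to prove the two inequalities $\omega_s(f,\d)_p\lesssim K_s(f,\d)_p$ and $K_s(f,\d)_p\lesssim \omega_s(f,\d)_p$ separately. The first direction is elementary; the second requires constructing an explicit near-optimal element of $W_p^s(\R^d)$.

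For $\omega_s(f,\d)_p\lesssim K_s(f,\d)_p$, I would fix an arbitrary $g\in W_p^s(\R^d)$ and apply property~$(b)$ to write
$\omega_s(f,\d)_p \le \omega_s(f-g,\d)_p + \omega_s(g,\d)_p$.
The first term is bounded by $2^s\|f-g\|_p$, which follows from applying Minkowski's inequality directly inside the definition of $\D_h^s(f-g)$ together with $\sum_\nu\binom{s}{\nu}=2^s$. The second is bounded by $\d^s|g|_{W_p^s}$ by property~$(e)$. Taking the infimum over $g\in W_p^s(\R^d)$ yields the claim.

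For the reverse inequality, I would construct a Steklov-type regularization of $f$ that is accurate to order $s$. As a guide, in one variable the iterated Steklov operator $S_h^s f$ with $S_h f(x)=h^{-1}\int_0^h f(x+t)\,dt$ satisfies $\|f-S_h^s f\|_p\lesssim \omega_s(f,h)_p$, while the identity $(S_h f)'(x)=[f(x+h)-f(x)]/h$ iterates to $(S_h^s f)^{(s)}(x) = h^{-s}\D_h^s f(x)$, so $\|(S_h^s f)^{(s)}\|_p\le h^{-s}\omega_s(f,h)_p$. In $d$ variables I would use the multivariate Johnen--Scherer construction~\cite{JS77}: take a smooth compactly supported kernel $\eta$ with vanishing moments up to order $s-1$ and set $g_\d$ to be a suitable iterated convolution of $f$ against $\eta_\d(x)=\d^{-d}\eta(x/\d)$, designed so that $f-g_\d$ admits a representation as an average of $s$-th order differences $\D_h^s f$ with $|h|\lesssim\d$. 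A Fubini-style calculation combined with Minkowski's integral inequality then gives $\|f-g_\d\|_p\lesssim \omega_s(f,\d)_p$.

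The main obstacle is verifying the Bernstein-type bound $\d^s|g_\d|_{W_p^s}\lesssim \omega_s(f,\d)_p$: for each multi-index $\a$ with $|\a|_1=s$, I must rewrite $D^\a g_\d$ as an average of $s$-th order differences of $f$ of step size $\asymp\d$, which requires distributing $s$ derivatives among the iterated smoothing kernels via integration by parts and identifying the resulting combinations as genuine $\D_h^s f$ rather than mixed differences. The vanishing-moment condition on $\eta$ is precisely what forces this telescoping to survive. Once both $\|f-g_\d\|_p\lesssim \omega_s(f,\d)_p$ and $\d^s|g_\d|_{W_p^s}\lesssim \omega_s(f,\d)_p$ are established, inserting $g=g_\d$ in the definition of $K_s(f,\d)_p$ completes the proof.
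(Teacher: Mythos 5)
Your proposal is correct and follows the standard Johnen--Scherer argument; the paper does not prove this lemma at all but simply cites \cite[Ch.~5]{BeSh} and \cite{JS77}, which is precisely the proof you are reconstructing. The easy direction ($\omega_s(f,\d)_p\lesssim K_s(f,\d)_p$ via subadditivity, the bound $\|\D_h^s(f-g)\|_p\le 2^s\|f-g\|_p$, and property $(e)$) is complete, and your sketch of the converse correctly isolates the two genuine technical points --- representing $f-g_\d$ as an average of $s$-th order differences of step $\lesssim\d$, and converting the mixed differences arising from $D^\a g_\d$ into pure directional differences (the step handled by \cite[Lemma 4.11]{BeSh}) --- so the outline is sound as it stands.
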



\smallskip

The following basic properties of the averaged moduli of smoothness can be found in~\cite[Ch.~1]{SP}, \cite{BMSVV14}, and~\cite{I86}.
For bounded functions  $f$ and $g$ in $L_p(\R^d)$, $1\le p<\infty$, $\d>0$, and $r\in \N$, we have
\begin{itemize}
\item[$(a')$]
$ \tau_r(f,\d)_p$ is a non-negative non-decreasing function of $\d$;

\item[$(b')$]
$\tau_r(f+g,\d)_p\le \tau_r(f,\d)_p+\tau_r(g,\d)_p;$

 \item[$(c')$]  $ \tau_{r+1}(f,\d)_p\le 2\tau_r\(f,\frac{r+1}{r}\d\)_p$;

\item[$(d')$]
for $\l>0$,
        $$
        \tau_r(f,\lambda \delta)_p\le C(r,\l)  \tau_r(f,\d)_p;
        $$
\item[$(e')$] $\tau_r(f,\d)_p\le C\bigg(\d^{r}|f|_{W_p^r}+\sum\limits_{r<|\b|_1\le d,\,\b\in \{0,1\}^d} \d^{|\b|_1} \|D^\b f\|_p\bigg)$,\\
provided the derivatives on the right-hand side exist as an element of $L_p(\R^d)$;

\item[$(f')$]  if $r>d/p$ and the right-hand side of~\eqref{taum} is finite, then $f$ coincides a.e. with a continuous function $F$ such that
\begin{equation}\label{taum}
  \tau_r(F,\d)_p\le C(r,d,p)\d^{d/p}\int_0^\d \frac{\omega_r(f,t)_p}{t^{d/p}}\frac{dt}{t}.
\end{equation}
\end{itemize}

\smallskip

In what follows, we assume that the family of subspaces $\Phi=(\Phi_\s)_{\s\ge 0}$ has the following standard properties (see~\cite[p.~217]{DL}): $0\in \Phi_\s$ for all $\s>0$, $\Phi_0=\{0\}$; $a\Phi_\s=\Phi_\s$ for each $a\neq 0$; $\Phi_\s+\Phi_\s \subset \Phi_{c\s}$ for some constant $c=c(\Phi)$. We also assume that for given parameters $p\in [1,\infty)$ and $s\in \N$, the family $\Phi=(\Phi_\s)_{\s\ge 0}$ satisfies
Jackson and Bernstein type inequalities. Namely, there exist positive constants $c_1$ and $c_2$ such that for all $\s>0$, we have
\begin{equation}\label{js}
  E(f,\Phi_\s)_p\le c_1\s^{-s}|f|_{W_p^s},\quad f\in W_p^s(\R^d),
\end{equation}
\begin{equation}\label{bs}
  |g|_{W_p^s}\le c_2 \s^s\|g\|_p,\quad g\in\Phi_\s,
\end{equation}
where $E(f,\Phi_\s)_p$ denotes the error of best approximation of $f$ by elements of $\Phi_\s$ in $L_p(\R^d)$:
$$
E(f,\Phi_\s)_p=\inf_{g \in \Phi_\s}\|f-g\|_p.
$$

%
%
In a natural way, inequalities~\eqref{js} and~\eqref{bs} imply the direct and inverse approximation theorems stated in terms of moduli of smoothness.

\begin{lemma}\label{leJB}
  Let  $s\in \N$ and $1\le p<\infty$.
   \begin{enumerate}
    \item[$1)$] If the Jackson inequality~\eqref{js} holds, then
     \begin{equation}\label{leJB.1}
          E(f,\Phi_\s)_p\le C\,\w_s(f,\s^{-1})_p, \quad f\in L_p(\R^d).
      \end{equation}
    \item[$2)$] If the Bernstein inequality~\eqref{bs} holds, then
      \begin{equation}\label{leJB.2}
         \w_s(f,\s^{-1})_p\le \frac{C}{\s^s}\sum_{\nu=0}^{[\s]}(\nu+1)^{s-1}E(f,\Phi_\nu)_p, \quad f\in L_p(\R^d).
      \end{equation}
  \end{enumerate}
  Here, the constant $C$ is independent of $f$ and $\s$.
\end{lemma}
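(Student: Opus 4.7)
The proof of both statements follows the classical Jackson--Bernstein scheme, with Lemma~\ref{leKM} bridging $K$-functionals and moduli of smoothness.

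For part $1)$, the plan is: given $f\in L_p(\R^d)$, for every $g\in W_p^s(\R^d)$ use $0\in \Phi_\s$ and the triangle inequality to write
\begin{equation*}
E(f,\Phi_\s)_p \le \|f-g\|_p + E(g,\Phi_\s)_p \le \|f-g\|_p + c_1\s^{-s}|g|_{W_p^s},
\end{equation*}
where the last step is the Jackson inequality~\eqref{js}. Taking the infimum over $g$ yields $E(f,\Phi_\s)_p \lesssim K_s(f,\s^{-1})_p$, and Lemma~\ref{leKM} converts this to $\w_s(f,\s^{-1})_p$, establishing~\eqref{leJB.1}.

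For part $2)$, I would run a dyadic telescoping argument. Pick the integer $N$ with $2^N\le \s<2^{N+1}$ and, for $0\le k\le N$, choose $\phi_k\in \Phi_{2^k}$ with $\|f-\phi_k\|_p\le 2 E(f,\Phi_{2^k})_p$, adopting the convention $\phi_{-1}=0$. Split
\begin{equation*}
\w_s(f,\s^{-1})_p \le \w_s(f-\phi_N,\s^{-1})_p + \w_s(\phi_N,\s^{-1})_p;
\end{equation*}
bound the first term by $2^s\|f-\phi_N\|_p \lesssim E(f,\Phi_{2^N})_p$ via the triangle inequality for finite differences, and for the second term telescope $\phi_N=\sum_{k=0}^N(\phi_k-\phi_{k-1})$ and apply property~$(e)$ of $\w_s$. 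Since each difference $\phi_k-\phi_{k-1}$ lies in some $\Phi_{C 2^k}$ (by the closure axiom $\Phi_\s+\Phi_\s\subset \Phi_{c\s}$ together with the scaling invariance $a\Phi_\s=\Phi_\s$), the Bernstein inequality~\eqref{bs} gives $|\phi_k-\phi_{k-1}|_{W_p^s}\lesssim 2^{ks}\,E(f,\Phi_{2^{k-1}})_p$. Summation then produces the dyadic bound $\w_s(f,\s^{-1})_p\lesssim \s^{-s}\sum_{k=0}^N 2^{ks}\,E(f,\Phi_{2^k})_p$.

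The main obstacle is converting this dyadic sum into the integer sum $\s^{-s}\sum_{\nu=0}^{[\s]}(\nu+1)^{s-1}E(f,\Phi_\nu)_p$ that appears in~\eqref{leJB.2}. This regrouping is routine once one has $E(f,\Phi_{2^k})_p\lesssim E(f,\Phi_\nu)_p$ for $2^{k-1}<\nu\le 2^k$, so the delicate point is ensuring a monotonicity-type relation for $E(f,\Phi_\s)_p$ in $\s$. Since the axioms quoted from~\cite[p.~217]{DL} record only the closure $\Phi_\s+\Phi_\s\subset \Phi_{c\s}$ and scaling invariance, I would either invoke the standard convention that the family is nested ($\Phi_{\s'}\subset \Phi_\s$ for $\s'\le \s$) or, absent this, combine $0\in \Phi_\s$ with the closure axiom to extract a weaker near-monotonicity of $E(f,\Phi_\s)_p$ (up to a $c$-dependent constant) sufficient for the regrouping. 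Once this comparison is in hand, the standard dyadic-to-integer resummation yields~\eqref{leJB.2}.
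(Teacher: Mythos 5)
Your proof is correct and follows essentially the same route as the paper, which simply defers to the classical argument of \cite[Theorem~5.1, Ch.~7]{DL} combined with Lemma~\ref{leKM}: your reduction of part $1)$ to the $K$-functional and your dyadic telescoping with the Bernstein inequality in part $2)$ are exactly the steps of that cited proof, merely written out in full. The near-monotonicity of $E(f,\Phi_\nu)_p$ that you flag for the dyadic-to-integer resummation is supplied by the nestedness built into the DeVore--Lorentz axioms the paper invokes (and holds in all of the paper's examples), so there is no genuine gap.
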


\begin{proof}
The assertion of the lemma can be proved by repeating the proof of~\cite[Theorem~5.1, Ch.~7]{DL} and applying Lemma~\ref{leKM}, see also~\cite{HI90}.
\end{proof}

\begin{lemma}\label{lns}
  Let $1\le p<\infty$ and $s\in \N$. Assume that $\Phi_\s$ is such that, for each $\a\in \Z_+^d,\, |\a|_1=s$,
\begin{equation}\label{bebe2}
  \|\tfrac{\partial}{\partial x_i} D^{\a}g_\s\|_p\le B \s \| D^{\a}g_\s\|_p,\quad i=1,\dots,d,\quad g_\s\in \Phi_\s,
\end{equation}
for some constant $B$ independent of $\s$.
Then
\begin{equation}\label{ns}
  \s^{-s}|g_\s|_{W_p^s}\le C\omega_s(g_\s,\s^{-1})_p,\quad g_\s\in \Phi_\s,
\end{equation}
where the constant $C$ does not depend on $g_\s$.
\end{lemma}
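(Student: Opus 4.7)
The plan is to exploit the integral representation of the finite difference $\D_h^s g_\s$ as a mean of directional derivatives, using the assumption~\eqref{bebe2} as a Bernstein-type tool to absorb the resulting error in a bootstrap. Condition~\eqref{bebe2} bounds first-order partial derivatives of any $s$-th derivative $D^\a g_\s$ by $B\s$ times $\|D^\a g_\s\|_p$; this is exactly what is needed to make a finite difference of $(v\cdot\nabla)^s g_\s$ of order one negligible compared to $(v\cdot\nabla)^s g_\s$ itself.

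First I would use the standard identity
$$
\D_h^s g_\s(x) = \int_0^1\!\!\cdots\!\int_0^1 (h\cdot\nabla)^s g_\s\bigl(x + (t_1+\cdots+t_s)h\bigr)\,dt_1\cdots dt_s,
$$
valid for $g_\s\in W_p^s(\R^d)$ by iterated application of the fundamental theorem of calculus. Setting $h=\eta v/\s$ for a unit vector $v\in\R^d$ and a parameter $\eta\in(0,1]$ and rearranging,
$$
(\eta/\s)^s (v\cdot\nabla)^s g_\s(x) = \D_{\eta v/\s}^s g_\s(x) - R_v(x),
$$
where $R_v(x)$ is the integral over $[0,1]^s$ of the shifts $(v\cdot\nabla)^s g_\s(x+\tau\eta v/\s) - (v\cdot\nabla)^s g_\s(x)$, with $\tau = t_1 + \cdots + t_s$. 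Expanding $(v\cdot\nabla)^s = \sum_{|\a|_1=s}\binom{s}{\a}v^\a D^\a$ and applying~\eqref{bebe2} termwise yields $\|\nabla[(v\cdot\nabla)^s g_\s]\|_p\lesssim \s\,|g_\s|_{W_p^s}$, so $\|R_v\|_p \lesssim \eta\,|g_\s|_{W_p^s}$ by the fundamental theorem of calculus. Combined with the trivial bound $\|\D_{\eta v/\s}^s g_\s\|_p \le \w_s(g_\s,\s^{-1})_p$, this produces
$$
\|(v\cdot\nabla)^s g_\s\|_p \lesssim \eta^{-s}\s^s\w_s(g_\s,\s^{-1})_p + \eta\,|g_\s|_{W_p^s}.
$$
A standard polarization argument applied to the homogeneous polynomial $v\mapsto (v\cdot\nabla)^s g_\s(x)$ of degree $s$ then yields $|g_\s|_{W_p^s}\asymp \sup_{|v|=1}\|(v\cdot\nabla)^s g_\s\|_p$ with constants depending only on $s$ and $d$. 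Taking the supremum over unit $v$ and choosing $\eta$ sufficiently small so that the term $C\eta|g_\s|_{W_p^s}$ can be absorbed into the left-hand side delivers~\eqref{ns}.

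The main obstacle is the bootstrap closure, which presumes $|g_\s|_{W_p^s}<\infty$; this finiteness is ensured by the standing Bernstein inequality~\eqref{bs} for the family $\Phi_\s$, or alternatively by iterating~\eqref{bebe2} starting from $g_\s\in L_p(\R^d)$. A secondary technical point is the polarization itself, which reduces to the linear independence of the monomials $v^\a$ with $|\a|_1=s$ on $\R^d$ and is therefore routine, yielding explicit $(s,d)$-dependent constants.
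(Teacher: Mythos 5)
Your proof is correct, and its engine is the same as the paper's: represent the $s$-th finite difference as the $s$-th derivative times $h^s$ plus a remainder involving one extra derivative, then use \eqref{bebe2} to bound that remainder by $B\s|h|$ times the quantity you are estimating and absorb it for $|h|$ a small multiple of $\s^{-1}$. Where you genuinely diverge is the multivariate reduction. The paper works coordinate-wise: it proves the one-dimensional inequality $h^m\|g_\s^{(m)}\|_p\le C\|\D_h^m g_\s\|_p$ via Taylor's formula with integral remainder, applies it in each variable to mixed differences $\D_{e_1h_1}^{\a_1}\cdots\D_{e_dh_d}^{\a_d}$, and then invokes a comparison between mixed differences and the full modulus $\w_s$ (citing Bennett--Sharpley, Lemma 4.11). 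You instead work with a single non-mixed difference in an arbitrary direction $v$, bound $\|(v\cdot\nabla)^s g_\s\|_p$, and recover $|g_\s|_{W_p^s}$ by polarization, i.e.\ the fact that powers of linear forms span the homogeneous polynomials of degree $s$, so each $D^\a$ with $|\a|_1=s$ is a fixed finite linear combination of operators $(v_j\cdot\nabla)^s$. Your route trades the mixed-modulus lemma for this (equally standard) algebraic fact; both are legitimate, and yours has the mild advantage of never leaving the realm of ordinary differences $\D_h^s$, which is what $\w_s$ is built from. Two small points to tidy up: the remainder $R_v$ as you describe it should carry the factor $(\eta/\s)^s$ in front of the integral (your final displayed inequality is the correct one after dividing by $(\eta/\s)^s$, so this is only a bookkeeping slip); and the finiteness of $|g_\s|_{W_p^s}$ needed to close the bootstrap does follow from the standing Bernstein inequality \eqref{bs} for the family $\Phi_\s$, but not by ``iterating \eqref{bebe2} from $g_\s\in L_p$'', since \eqref{bebe2} only controls derivatives of order $s+1$ in terms of those of order exactly $s$ — the paper's proof has the same implicit reliance on \eqref{bs}, so keep that justification and drop the alternative.
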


\begin{proof}
First, we consider the one-dimensional case, $d=1$. Let $x\in \R$, $h>0$, and $m\le s$, $m\in \N$. Applying Taylor's formula, we obtain
\begin{equation*}
  \begin{split}
     \Delta_h^m g_\s(x)&=\sum_{\nu=0}^m(-1)^{\nu+s}\binom{m}{\nu}\bigg(\sum_{\a=0}^m\frac{g_\s^{(\a)}(x)}{\a!}(\nu h)^\a\\
     &\qquad+\frac{(\nu h)^{m+1}}{m!}\int_0^1 (1-t)^m g_\s^{(m+1)}(x+\nu th)dt\bigg)\\
     &=h^m g_\s^{(m)}(x)+\frac{h^{m+1}}{m!}\int_0^1 (1-t)^m \sum_{\nu=0}^m(-1)^\nu\binom{m}{\nu} \nu^{m+1}g_\s^{(m+1)}(x+\nu th)dt.
  \end{split}
\end{equation*}
Then, using the triangle inequality and Minkovski's inequality, we get
\begin{equation*}
  h^m\|g_\s^{(m)}\|_p\le \|\D_h^m g_\s\|_p+\frac{h^{m+1}}{(m+1)!}\sum_{\nu=0}^m \binom{m}{\nu} \nu^{m+1}\|g_\s^{(m+1)}\|_p.
\end{equation*}
This inequality together with assumption~\eqref{bebe2} yields
\begin{equation*}
  (1-cBh\s)h^m\|g_\s^{(m)}\|_p\le \|\D_h^m g_\s\|_p,
\end{equation*}
where the constant $c$ depends only on $s$. Thus, for any  $0<h<\frac1{2cB \s}$, we have
\begin{equation}\label{ns3}
  h^m\|g_\s^{(m)}\|_p \le C\|\D_h^m g_\s\|_p,
\end{equation}
which together with property $(d)$ of the modulus of smoothness implies~\eqref{ns}.

Now let $x\in \R^d$. Applying in each variable~\eqref{ns3}, we get, for all $\a\in \Z_+^d$, $|\a|_1=s$, the following inequalities
\begin{equation}\label{ns4}
\begin{split}
   \s^{-s}|g_\s|_{W_p^s}&=\sum_{|\a|_1=s}\s^{-\a_1}\dots \s^{-\a_d}\|D^\a g_\s\|_p\\
   &\le C\sum_{|\a|_1=s} \|\D_{\frac{ce_1}\s}^{\a_1}\dots \D_{\frac{ce_d}\s}^{\a_d}g_\s\|_p\\
   &\le C\sum_{|\a|_1=s} \sup_{|h|\le \frac c\s}\|\D_{e_1 h_1}^{\a_1}\dots \D_{e_d h_d}^{\a_d}g_\s\|_p\le C\omega_s(f,c\s^{-1})_p,
\end{split}
\end{equation}
where the latter inequality follows from~\cite[Lemma 4.11, p. 338]{BeSh}. 
Finally, \eqref{ns4} and property $(d)$ of moduli of moothness imply~\eqref{ns}.
\end{proof}

The next lemma can be proved by the standard scheme (see, e.g.,~\cite{R94} or~\cite{KL23}) by employing inequalities~\eqref{leJB.1} and~\eqref{leJB.2} and properties $(b)$, $(c)$, $(d)$ of moduli of smoothness.

\begin{lemma}\label{lemR} 
  Let $f\in L_p(\R^d)$, $1\le p<\infty$, and $r\in \N$. 
  If there exists a positive constant $F$ such that
$$
\w_r(f,\d)_p\le F\w_{r+1}(f,\d)_p,\quad \d>0,
$$
then 
$$
\w_r(f,\s^{-1})_p\le GE(f,\Phi_\s)_p
$$
for some constant $G$ independent of $\s$.
\end{lemma}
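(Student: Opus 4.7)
The plan follows the standard Bari-Stechkin / Rathore-type scheme for reverse approximation inequalities, as in~\cite{R94} and~\cite{KL23}. It combines the hypothesis $\w_r(f, \d)_p \le F \w_{r+1}(f, \d)_p$ with the direct estimate~\eqref{leJB.1} and the inverse estimate~\eqref{leJB.2} (applied at order $r+1$) in a self-referential argument.

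Concretely, the first step is the chain
$$
\w_r(f, \s^{-1})_p \le F \w_{r+1}(f, \s^{-1})_p \le \frac{CF}{\s^{r+1}} \sum_{\nu=0}^{[\s]} (\nu+1)^r E(f, \Phi_\nu)_p,
$$
obtained from the hypothesis and~\eqref{leJB.2} at order $r+1$. Next, applying~\eqref{leJB.1} at order $r$ to each term $E(f, \Phi_\nu)_p$ in the sum and then invoking the hypothesis again yields the self-referential estimate
$$
E(f, \Phi_\nu)_p \le C_1 F \w_{r+1}(f, \nu^{-1})_p \le \frac{C_2 F}{\nu^{r+1}} \sum_{\mu=0}^{[\nu]} (\mu+1)^r E(f, \Phi_\mu)_p.
$$
The conclusion then follows by a dyadic decomposition of the summation and telescoping manipulations that rely on properties $(b)$, $(c)$, $(d)$ of the moduli of smoothness, yielding
$$
\sum_{\nu=0}^{[\s]} (\nu+1)^r E(f, \Phi_\nu)_p \lesssim \s^{r+1} E(f, \Phi_\s)_p,
$$
which substituted back gives the desired inequality with some constant $G$ independent of $\s$.

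The main obstacle is this final resummation step. Taken in isolation, the recursive bound above only forces polynomial growth of the partial sums, which in general produces a logarithmic overshoot; such overshoot is tight in the classical saturation regime of $\w_r$ at order $r$. The hypothesis $\w_r \le F \w_{r+1}$ is precisely what rules out this saturation: combined with property $(c)$ it gives $\w_r \asymp \w_{r+1}$ at every scale, which, together with the scaling property $(d)$, forces the dyadic blocks $2^{k(r+1)} E(f, \Phi_{2^k})_p$ to grow geometrically, so that the full sum is dominated (up to a constant) by its last block $\s^{r+1} E(f, \Phi_\s)_p$, as required.
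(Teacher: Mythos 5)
The paper proves this lemma only by reference to the ``standard scheme'' of Rathore and \cite{KL23} using \eqref{leJB.1}, \eqref{leJB.2} and properties $(b)$--$(d)$; your plan correctly names these ingredients, but it breaks down at the decisive step. The claim that the hypothesis forces the dyadic blocks $2^{k(r+1)}E(f,\Phi_{2^k})_p$ to grow geometrically --- equivalently, that $E(f,\Phi_{2^{k+1}})_p\ge q\,2^{-(r+1)}E(f,\Phi_{2^k})_p$ for some $q>1$ --- is a \emph{lower} bound on an individual best approximation, and none of the tools in play can produce one: Jackson's inequality \eqref{leJB.1} only bounds $E(f,\Phi_\nu)_p$ from above by a modulus, and the inverse inequality \eqref{leJB.2} only bounds a modulus by a \emph{sum} of $E(f,\Phi_\nu)_p$'s. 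Indeed, your target resummation $\sum_{\nu\le[\s]}(\nu+1)^rE(f,\Phi_\nu)_p\lesssim \s^{r+1}E(f,\Phi_\s)_p$ is, given \eqref{leJB.2}, equivalent to the conclusion $\w_r(f,\s^{-1})_p\lesssim E(f,\Phi_\s)_p$ itself, so the argument is circular. The intermediate ``self-referential estimate'' carries no information either: $E(f,\Phi_\nu)_p\le C\nu^{-(r+1)}\sum_{\mu\le\nu}(\mu+1)^rE(f,\Phi_\mu)_p$ holds for \emph{every} nonnegative sequence once $C$ exceeds an absolute constant, since the $\mu=[\nu]$ block of the sum already dominates $c\,\nu^{r+1}E(f,\Phi_\nu)_p$.

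The missing idea is a two-scale absorption argument, which is where the hypothesis and the gap between orders $r$ and $r+1$ are actually used. Fix $l\in\N$ and estimate
\begin{equation*}
\w_r(f,\s^{-1})_p\le 2^{lr}\w_r\big(f,(2^l\s)^{-1}\big)_p\le 2^{lr}F\,\w_{r+1}\big(f,(2^l\s)^{-1}\big)_p
\le \frac{2^{lr}FC}{(2^l\s)^{r+1}}\sum_{\nu=0}^{[2^l\s]}(\nu+1)^{r}E(f,\Phi_\nu)_p,
\end{equation*}
then split the sum at $[\s]$. The tail over $[\s]<\nu\le[2^l\s]$ is at most $C(2^l\s)^{r+1}E(f,\Phi_\s)_p$ by monotonicity of $E(f,\Phi_\nu)_p$. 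The head is at most $C\s^{r+1}\w_r(f,\s^{-1})_p$ by \eqref{leJB.1} at order $r$ together with the \emph{unconditional} resummation $\sum_{j\le m}2^{j(r+1)}\w_r(f,2^{-j})_p\le 2\cdot 2^{m(r+1)}\w_r(f,2^{-m})_p$, which needs only $\w_r(f,2t)_p\le 2^r\w_r(f,t)_p$ and no hypothesis on $f$ (note your ``geometric growth'' heuristic is valid for the modulus blocks, just not for the $E$-blocks). The head therefore contributes $2^{lr}FC\cdot\s^{r+1}/(2^l\s)^{r+1}=2^{-l}FC$ times $\w_r(f,\s^{-1})_p$, which is absorbed into the left-hand side after choosing $l$ with $2^{-l}FC\le \tfrac12$, leaving $\w_r(f,\s^{-1})_p\le 2\cdot 2^{lr}FC\,E(f,\Phi_\s)_p$. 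Without introducing the finer scale $2^l\s$ there is no small factor available, and the estimate collapses to the tautology $\w_r\lesssim\w_r$.
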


One of the important examples for $\Phi_\s$, where Lemmas~\ref{leJB}, \ref{lemR}, and~\ref{lns} hold, is the class of band-limited functions $\mathcal{B}_p^\s(\R^d)$. In the next two lemmas, we recall several classical inequalities from approximation theory for this case. Everywhere below we denote
$$
E_\s(f)_p=E(f,\mathcal{B}_p^\s(\R^d))_p.
$$

\begin{lemma}\label{leJB+} 
Let $f\in L_p(\R^d)$, $1\le p<\infty$, and $r\in \N$. Then
\begin{equation}\label{leJB.1+}
  E_\s(f)_p\le C\w_r(f,\s^{-1})_p
\end{equation}
and
\begin{equation}\label{leJB.2+}
  \w_r(f,1/\s)_p\le \frac{C}{\s^r}\sum_{\nu=0}^{[\s]}(\nu+1)^{r-1}E_\nu(f)_p,
\end{equation}
where the constant $C$ does not depend on $f$ and $\s$. Further, for all $g_\s\in \Phi_\s$, we have
\begin{equation}\label{NS+}
\d^{-r}\w_r (g_\s,\d)_{p}\asymp |g_\s|_{W_p^r},\quad 0<\delta\le \s^{-1}.
\end{equation}
In particular, if $g_\s\in \mathcal{B}_p^\s(\R^d)$ is such that $\|f-g_\s\|_p\le c\omega_r(f,\s^{-1})_p$, where the constant $c$ is independent of $f$ and $\s$, then
\begin{equation}\label{eqvR}
 \omega_r(f,\s^{-1})_p\asymp \|f-g_\s\|_p+\s^{-r}|g_\s|_{W_p^r}
\end{equation}
and
\begin{equation}\label{NS}
  |g_\s|_{W_p^r}\le C\s^r\w_r(f,\s^{-1})_p.
\end{equation}
In the above inequalities $C$ is some positive constant independent of $f$ and $\s$.
\end{lemma}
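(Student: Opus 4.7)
The plan is to deduce every displayed inequality of the lemma from the already-established abstract Lemmas~\ref{leJB}, \ref{leKM}, and~\ref{lns} by checking that the family $\Phi_\s=\mathcal{B}_p^\s(\R^d)$ fulfills the Jackson inequality~\eqref{js}, the Bernstein inequality~\eqref{bs}, and the stability hypothesis~\eqref{bebe2}. The key ingredient is the classical Bernstein--Nikolskii inequality
$$
\|D^\a g\|_p\le C(\a,d)\,\s^{|\a|_1}\|g\|_p,\qquad g\in\mathcal{B}_p^\s(\R^d),
$$
which follows from writing $D^\a g=(D^\a \eta_\s)*g$ for a Schwartz function $\eta_\s$ whose Fourier transform equals $1$ on $[-\s,\s]^d$, and then applying Young's inequality. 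This immediately yields both~\eqref{bs} (with $s=r$) and~\eqref{bebe2} for every $s\in\N$. The Jackson inequality~\eqref{js} is obtained, as usual, by approximating $f\in W_p^s(\R^d)$ with the convolution $K_\s*f$, where $\widehat{K_\s}$ is a smooth cutoff supported in $[-\s,\s]^d$ and equal to $1$ on $[-\s/2,\s/2]^d$, and controlling the remainder with a standard Taylor-type estimate.

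Once~\eqref{js} and~\eqref{bs} are available, parts~$1)$ and~$2)$ of Lemma~\ref{leJB} produce~\eqref{leJB.1+} and~\eqref{leJB.2+} without further work. For the equivalence~\eqref{NS+}, the upper bound $\omega_r(g_\s,\d)_p\le\d^r|g_\s|_{W_p^r}$ is just property~$(e)$, while the reverse bound $|g_\s|_{W_p^r}\le C\d^{-r}\omega_r(g_\s,\d)_p$ for $0<\d\le\s^{-1}$ follows from the intermediate inequality $h^m\|g_\s^{(m)}\|_p\le C\|\Delta_h^m g_\s\|_p$ derived in the proof of Lemma~\ref{lns}, which is valid uniformly in $h\lesssim\s^{-1}$; applying it coordinate-wise as in~\eqref{ns4} and invoking property~$(d)$ to absorb the resulting constants then handles every $\d\in(0,\s^{-1}]$.

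To establish~\eqref{eqvR} and~\eqref{NS}, I would fix $g_\s\in\mathcal{B}_p^\s(\R^d)$ satisfying $\|f-g_\s\|_p\le c\,\omega_r(f,\s^{-1})_p$. The direction $\omega_r(f,\s^{-1})_p\lesssim\|f-g_\s\|_p+\s^{-r}|g_\s|_{W_p^r}$ is Lemma~\ref{leKM} combined with the observation that $g_\s$ is an admissible competitor in $K_r(f,\s^{-1})_p$. For the reverse inequality, the triangle inequality~$(b)$ together with the trivial bound $\omega_r(f-g_\s,\s^{-1})_p\le 2^r\|f-g_\s\|_p$ gives
$$
\omega_r(g_\s,\s^{-1})_p\le\omega_r(f,\s^{-1})_p+2^r\|f-g_\s\|_p\lesssim\omega_r(f,\s^{-1})_p,
$$
and then~\eqref{NS+} evaluated at $\d=\s^{-1}$ converts this into $\s^{-r}|g_\s|_{W_p^r}\lesssim\omega_r(f,\s^{-1})_p$, which is~\eqref{NS} and simultaneously closes the equivalence in~\eqref{eqvR}.

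I do not foresee any serious obstacle, as this is a classical package assembled from standard Fourier-analytic inputs. The only point that demands care is keeping all implicit constants independent of $f$ and $\s$, which is automatic thanks to the dilation symmetry $g\in\mathcal{B}_p^\s(\R^d)\Leftrightarrow g(\,\cdot/\s\,)\in\mathcal{B}_p^1(\R^d)$; normalizing to $\s=1$ reduces every step to a fixed inequality on the unit band-limit class.
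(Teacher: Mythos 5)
Your argument is correct, but note that the paper does not actually prove this lemma: its ``proof'' consists of citations to \cite{KT20_2}, \cite{KT21}, and Timan's book for the classical direct and inverse theorems \eqref{leJB.1+}--\eqref{leJB.2+}. What you do instead is assemble a self-contained derivation from the paper's own abstract machinery: the Bernstein--Nikolskii inequality (Lemma~\ref{leBer}) supplies \eqref{bs} and \eqref{bebe2} for $\Phi_\sigma=\mathcal{B}_p^\sigma(\R^d)$, a de la Vall\'ee Poussin type convolution supplies the Jackson inequality \eqref{js}, Lemma~\ref{leJB} then yields \eqref{leJB.1+} and \eqref{leJB.2+}, Lemma~\ref{lns} together with property $(e)$ gives \eqref{NS+}, and the realization statements \eqref{eqvR} and \eqref{NS} follow from Lemma~\ref{leKM} and the triangle inequality exactly as you write. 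This buys an argument that is internal to the paper and makes the logical dependencies explicit, at the cost of re-deriving material the cited references already contain. Two small points deserve care: (i) the intermediate inequality \eqref{ns3} is established only for $h<(2cB\sigma)^{-1}$, so extending the lower bound in \eqref{NS+} to all $\delta\le\sigma^{-1}$ requires, in addition to property $(d)$, the monotonicity of $\omega_r(g_\sigma,\cdot)_p$, i.e.\ $\omega_r(g_\sigma,\delta)_p\ge\omega_r(g_\sigma,(2cB\sigma)^{-1})_p\gtrsim\sigma^{-r}|g_\sigma|_{W_p^r}\gtrsim\delta^{r}|g_\sigma|_{W_p^r}$ for $(2cB\sigma)^{-1}<\delta\le\sigma^{-1}$ --- you gesture at this but it is worth writing out; (ii) with the paper's definition $\mathcal{B}_p^\sigma(\R^d)\subset L_p(\R^d)\cap L_1(\R^d)$, one should in principle check that the near-best approximant $K_\sigma*f$ lies in that class, a definitional wrinkle inherited from the paper rather than introduced by your argument.
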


\begin{proof}
The relations~\eqref{leJB.1+}--\eqref{NS} are well-known, see~\cite{KT20_2} and~\cite{KT21}. See also~\cite[Ch.~5 and Ch.~6]{timan} for the classical  inequalities~\eqref{leJB.1+} and~\eqref{leJB.2+}.
\end{proof}

Recall also the classical Bernstein type inequalities, see, e.g.,~\cite[Ch.~2 and Ch.~3]{nikol-book}.
\begin{lemma}\label{leBer}
  Let $1\le p<\infty$, and $\a\in \Z_+^d$. Then, for each $g_\s\in \mathcal{B}_p^\s(\R^d)$,
  \begin{equation}\label{Ber}
    \| D^\a g_\s\|_p\le \s^{|\a|_1}\|g_\s\|_p.
  \end{equation}
\end{lemma}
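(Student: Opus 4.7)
The inequality is the classical Bernstein--Nikol'skii inequality for entire functions of exponential type, and the natural strategy is to reduce to the one-dimensional case along each coordinate direction and iterate. The proof splits into two pieces: a soft ``up-to-constants'' version that follows from a reproducing convolution identity, and a sharp-constant version that requires a more delicate Riesz interpolation argument.

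For the soft version, I would fix a Schwartz function $\eta$ on $\R^d$ with $\widehat{\eta}\equiv 1$ on $[-1,1]^d$ and $\supp\widehat{\eta}\subset[-2,2]^d$; the rescaling $\eta_\s(x)=\s^d\eta(\s x)$ then satisfies $\widehat{\eta_\s}\equiv 1$ on $[-\s,\s]^d\supset\supp\widehat{g_\s}$, so $g_\s=g_\s*\eta_\s$ in $L_p(\R^d)$. Differentiating through the convolution gives $D^\a g_\s=g_\s*D^\a\eta_\s$, and Young's inequality combined with the scaling $\|D^\a\eta_\s\|_1=\s^{|\a|_1}\|D^\a\eta\|_1$ yields $\|D^\a g_\s\|_p\le C\s^{|\a|_1}\|g_\s\|_p$ with a constant $C$ depending only on $\eta$, $\a$, and $d$. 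This weak form is already sufficient for every later use of Bernstein's inequality in the paper.

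The main obstacle is removing the extraneous constant to recover $C=1$. For this one invokes the one-dimensional Bernstein inequality for $\mathcal{B}_p^\s(\R)$, proved via the Riesz interpolation formula that expresses $g_\s'(x)$ as an absolutely convergent series of translates of $g_\s$ whose coefficients have $\ell_1$-norm exactly $\s$; Minkowski's inequality applied to this representation then delivers $\|g_\s'\|_p\le\s\|g_\s\|_p$. Fubini reduces the multivariate statement to the one-dimensional one in each coordinate (using that slices of a function in $\mathcal{B}_p^\s(\R^d)$ along coordinate lines belong, after adjusting for $L_p$-a.e.\ selection of the slice, to $\mathcal{B}_p^\s(\R)$), and iterating $|\a|_1$ times produces the stated inequality. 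Since the paper invokes Lemma~\ref{leBer} merely as a standard tool and cites~\cite[Ch.~2 and Ch.~3]{nikol-book}, in practice I would present the result as an appeal to that reference rather than reproducing the Riesz-interpolation argument in full.
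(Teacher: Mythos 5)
The paper offers no proof of this lemma at all---it simply recalls the inequality from Nikol'skii's book \cite{nikol-book}---and your closing sentence proposes doing exactly the same, so your approach matches the paper's; the convolution argument you supply in addition ($g_\s=g_\s*\eta_\s$ with $\widehat{\eta_\s}\equiv 1$ on $[-\s,\s]^d$, then Young's inequality) is a correct, self-contained proof of the inequality up to a constant, which is all that any later application in the paper requires. One caveat: with the paper's $2\pi$-normalized Fourier transform, $\supp\widehat{g_\s}\subset[-\s,\s]^d$ corresponds to exponential type $2\pi\s$ in each variable, so the sharp Bernstein--Nikol'skii constant is $(2\pi\s)^{|\a|_1}$ rather than $\s^{|\a|_1}$; the constant $1$ in \eqref{Ber} is therefore a harmless normalization slip in the statement itself, and your Riesz-interpolation paragraph, carried out under this convention, would likewise yield $2\pi\s$ rather than $\s$ in the one-dimensional step---so there is no point in fighting for the constant $1$, and the soft convolution argument already suffices.
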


\section{Main results}

\subsection{Direct and inverse inequalities}\label{S1}

\begin{theorem}\label{th1+}
  Let $f\in L_p(\R^d)$, $1\le p<\infty$, and $r,s\in \N$. Suppose $G_\s$ satisfies assumptions~\eqref{c1} and~\eqref{c2'} with $s\le 2r$. Then
  \begin{equation}\label{th1.1'}
     \|f-G_\s f\|_p\le \kappa_1\|f_{\g/\s,r}-f\|_{\ell_p(X_\s)}+{C_1}\w_s(f,\s^{-1})_p.
  \end{equation}
If, additionally, $f\in \ell_p(X_\s)$ and \eqref{c1'} holds, then
  \begin{equation}\label{th1.2'}
     \kappa_2\|f_{\g/\s,r}-f\|_{\ell_p(X_\s)}-{C_2}\w_s(f,\s^{-1})_p\le \|f-G_\s f\|_p.
  \end{equation}
Here, $C_1$ and $C_2$ are some positive constants independent of $f$ and $\s$.
\end{theorem}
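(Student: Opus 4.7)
The plan is to treat both inequalities through a unified strategy anchored by a near-best band-limited approximant. By Lemma~\ref{leJB+}, choose $g_\s \in \mathcal{B}_p^{\l\s}(\R^d)$ with $\|f - g_\s\|_p \lesssim \w_s(f,\s^{-1})_p$ and, via~\eqref{NS}, $|g_\s|_{W_p^s} \lesssim \s^s\,\w_s(f,\s^{-1})_p$. This $g_\s$ will serve as a bridge between the continuous object $f_{\g/\s,r}$ and the discrete sampling machinery of $G_\s$.

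For the direct estimate~\eqref{th1.1'}, I would use the decomposition
\begin{equation*}
f - G_\s f \;=\; (f - f_{\g/\s,r}) \;+\; (f_{\g/\s,r} - G_\s f_{\g/\s,r}) \;+\; G_\s(f_{\g/\s,r} - f).
\end{equation*}
The first summand is bounded in $L_p$ by $C_1\w_{2r}(f,\g/\s)_p \lesssim \w_s(f,\s^{-1})_p$ via~\eqref{st1}, using $s \le 2r$ with properties $(a)$ and $(c)$ of the modulus. The third summand is handled directly by~\eqref{c1}, yielding exactly $\kappa_1\|f_{\g/\s,r} - f\|_{\ell_p(X_\s)}$. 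For the middle summand I would insert $g_\s$ and split it as $(f_{\g/\s,r} - g_\s) + (g_\s - G_\s g_\s) + G_\s(g_\s - f_{\g/\s,r})$: the first piece is $\lesssim \w_s(f,\s^{-1})_p$ by the triangle inequality and~\eqref{st1}, the second piece is $\lesssim \s^{-s}|g_\s|_{W_p^s} \lesssim \w_s(f,\s^{-1})_p$ by~\eqref{c2'} and the bound on $|g_\s|_{W_p^s}$, and the third piece reduces by~\eqref{c1} to the discrete seminorm $\|g_\s - f_{\g/\s,r}\|_{\ell_p(X_\s)}$.

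For the inverse estimate~\eqref{th1.2'}, I would apply~\eqref{c1'} to $f_{\g/\s,r} - f \in L_p(\R^d)\cap\ell_p(X_\s)$ and then write
\begin{equation*}
G_\s(f_{\g/\s,r} - f) \;=\; \bigl(G_\s f_{\g/\s,r} - f_{\g/\s,r}\bigr) \;+\; (f_{\g/\s,r} - f) \;+\; (f - G_\s f).
\end{equation*}
The middle term contributes $\lesssim \w_s(f,\s^{-1})_p$ by the same argument as above, the first term is controlled by the three-piece decomposition already used in the direct case, and $\|f - G_\s f\|_p$ survives with coefficient one, which gives~\eqref{th1.2'} after a rearrangement.

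The main obstacle common to both directions is the estimate
\begin{equation*}
\|g_\s - f_{\g/\s,r}\|_{\ell_p(X_\s)} \;\lesssim\; \w_s(f,\s^{-1})_p,
\end{equation*}
which I would split as $\|g_\s - (g_\s)_{\g/\s,r}\|_{\ell_p(X_\s)} + \|(g_\s - f)_{\g/\s,r}\|_{\ell_p(X_\s)}$. For the first piece, $(g_\s)_\d$ is the convolution of $g_\s$ with a normalized indicator of a ball and therefore remains in $\mathcal{B}_p^{\l\s}(\R^d)$; a Plancherel--Polya type inequality valid under the separation condition on $X_\s$ gives $\|h\|_{\ell_p(X_\s)} \lesssim \|h\|_p$ for $h \in \mathcal{B}_p^{\l\s}(\R^d)$, after which~\eqref{st1}, property $(c)$, and~\eqref{NS} close the estimate. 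For the second piece, applying Jensen's inequality to the representation~\eqref{SB3} yields $|(g_\s - f)_{\g/\s,r}(\xi)|^p \lesssim |B_{\g/\s}|^{-1}\int_{B_{\g/\s}(\xi)} |g_\s - f|^p$; the separation hypothesis makes the balls $B_{\g/\s}(\xi)$ pairwise disjoint, so the factor $1/\s^d$ in the discrete seminorm collapses the sum into a multiple of $\|g_\s - f\|_p^p \lesssim \w_s(f,\s^{-1})_p^p$. The delicate point is verifying the Plancherel--Polya step with constants depending only on $p,d,\g,\l$, and then bookkeeping the many intermediate constants so that only $\kappa_1$ and $\kappa_2$ multiply the discrete seminorm on the right-hand sides of~\eqref{th1.1'} and~\eqref{th1.2'}.
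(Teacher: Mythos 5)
Your proposal is correct and follows the same overall architecture as the paper's proof: the same three-term decomposition of $f-G_\s f$ through $f_{\g/\s,r}$, the same use of \eqref{st1} and property $(c)$ for the first term, \eqref{c1} for the third, a near-best band-limited $g_\s$ for the middle term, and the same reduction of the inverse estimate \eqref{th1.2'} to the bounds already established for the direct one via \eqref{c1'}. The one genuine divergence is your choice of intermediary in the middle term: you compare $f_{\g/\s,r}$ with $g_\s$ itself, whereas the paper compares it with $(g_\s)_{\g/\s,r}$. The paper's choice makes the problematic discrete quantity equal to $\|(f-g_\s)_{\g/\s,r}\|_{\ell_p(X_\s)}$, which by linearity of the averaging, H\"older's inequality, and the disjointness of the balls $B_{\g/\s}(\xi)$ collapses directly to $C\g^{-d/p}\|f-g_\s\|_p$ (this is \eqref{th1.5'}); the term $\|(g_\s)_{\g/\s,r}-G_\s((g_\s)_{\g/\s,r})\|_p$ is then handled by \eqref{c2'} since $(g_\s)_{\g/\s,r}$ is still band-limited with $|(g_\s)_{\g/\s,r}|_{W_p^s}\lesssim|g_\s|_{W_p^s}$. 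Your choice instead forces the extra term $\|g_\s-(g_\s)_{\g/\s,r}\|_{\ell_p(X_\s)}$, for which you correctly identify the need for a Plancherel--Polya inequality $\|h\|_{\ell_p(X_\s)}\lesssim\|h\|_p$ for $h\in\mathcal{B}_p^{\l\s}(\R^d)$ over the $2\g/\s$-separated set $X_\s$. That inequality is true and standard (it follows from the pointwise Nikolskii-type bound $|h(\xi)|^p\lesssim\s^d\int_{B_{\g/\s}(\xi)}|h|^p$ for band-limited $h$ plus disjointness of the balls), so your route closes, but it imports a nontrivial auxiliary fact that the paper's bookkeeping avoids entirely; if you rewrite the middle term with $(g_\s)_{\g/\s,r}$ in place of $g_\s$, your "delicate point" disappears.
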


Taking into account the fact that $\lim_{\d\to 0}\w_{s}(f,\d)_p=0$  for any $f\in L_p(\R^d)$, we obtain by Theorem~\ref{th1+} the following convergence criteria.

\begin{corollary}\label{th2}
  Let $f\in L_p(\R^d)\cap \ell_p(X_\s)$, $1\le p<\infty$, and $r,s\in \N$. Suppose that for all $\s>0$, the operator $G_\s$  satisfies assumptions~\eqref{c1}, \eqref{c1'}, and~\eqref{c2'} with $s\le 2r$. Then $G_\s f$ converges to  $f$ as $\s\to \infty$ in $L_p(\R^d)$ if and only if
  \begin{equation}\label{conv}
    \|f_{\g/\s,r}-f\|_{\ell_p(X_\s)}\to 0\quad\text{as}\quad \s\to \infty.
  \end{equation}
\end{corollary}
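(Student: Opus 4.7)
The corollary is essentially a direct consequence of the two-sided estimates in Theorem~\ref{th1+}, so the plan is to package both implications by isolating the quantity $\|f_{\g/\s,r}-f\|_{\ell_p(X_\s)}$ on one side and relying on property~$(a)$ of the modulus of smoothness, namely $\lim_{\s\to\infty}\omega_s(f,\s^{-1})_p=0$ for every $f\in L_p(\R^d)$.

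For the sufficiency direction, I would assume \eqref{conv} and apply the direct inequality~\eqref{th1.1'} from Theorem~\ref{th1+}. The right-hand side is $\kappa_1\|f_{\g/\s,r}-f\|_{\ell_p(X_\s)}+C_1\omega_s(f,\s^{-1})_p$. The first summand tends to zero by assumption, and the second by property~$(a)$ of the integral modulus of smoothness applied with $\d=\s^{-1}$. This yields $\|f-G_\s f\|_p\to 0$.

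For the necessity direction, I would use the inverse inequality~\eqref{th1.2'} rearranged as
\begin{equation*}
\kappa_2\|f_{\g/\s,r}-f\|_{\ell_p(X_\s)}\le \|f-G_\s f\|_p+C_2\omega_s(f,\s^{-1})_p.
\end{equation*}
Under the hypothesis that $G_\s f\to f$ in $L_p$, the first term on the right vanishes as $\s\to\infty$, and the second again vanishes by property~$(a)$. Dividing by $\kappa_2>0$ (which is legitimate since the hypotheses \eqref{c1'} and \eqref{c1}, together with membership $f\in\ell_p(X_\s)$, are part of the assumptions), we conclude \eqref{conv}.

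There is no real obstacle here; the content of the argument has already been absorbed into Theorem~\ref{th1+}, and the only ingredient added is the qualitative fact $\omega_s(f,\d)_p\to 0$ as $\d\to 0^+$, which holds for every $f\in L_p$. I would only take care to state explicitly that the assumption $f\in\ell_p(X_\s)$ is needed so that both sides of~\eqref{th1.1'} and~\eqref{th1.2'} make sense uniformly in $\s$, and that the constants $\kappa_1,\kappa_2,C_1,C_2$ are independent of $\s$ so that passage to the limit is justified.
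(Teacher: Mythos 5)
Your argument is exactly the paper's: the corollary is stated as an immediate consequence of Theorem~\ref{th1+} combined with the fact that $\lim_{\d\to 0+}\w_s(f,\d)_p=0$ for every $f\in L_p(\R^d)$, which is precisely how you use \eqref{th1.1'} for sufficiency and the rearranged \eqref{th1.2'} for necessity. The proof is correct and follows the same route as the paper.
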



Using Theorem~\ref{th1+}, we determine the exact order of convergence of $G_\s f$ under specific assumptions on the function $f$.

\begin{corollary}\label{cor1}
 Let $f\in L_p(\R^d)\cap \ell_p(X_\s)$, $1\le p<\infty$, and $r,s\in \N$. Suppose that for all $\s>0$, the operator $G_\s$  satisfies assumptions~\eqref{c1}, \eqref{c1'}, and~\eqref{c2'} with $s\le 2r$. If
  $$
  \w_s(f,\s^{-1})_p=o\(\|f_{\g/\s,\,r}-f\|_{\ell_p(X_\s)}\),
  $$
  then
  $$
  \|f-G_\s f\|_p\sim \|f_{\g/\s,\,r}-f\|_{\ell_p(X_\s)}.
  $$
\end{corollary}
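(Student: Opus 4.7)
My approach will be to derive this corollary directly from the two-sided estimates of Theorem~\ref{th1+}. The idea is that under the hypothesis $\omega_s(f,\sigma^{-1})_p = o(\|f_{\gamma/\sigma,r}-f\|_{\ell_p(X_\sigma)})$, the modulus-of-smoothness remainder in both~\eqref{th1.1'} and~\eqref{th1.2'} is dominated by the discrete term for large $\sigma$, leaving the latter as the exact order of $\|f - G_\sigma f\|_p$.

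Concretely, I will first stack the direct estimate~\eqref{th1.1'} and the inverse estimate~\eqref{th1.2'} into the two-sided bound
$$
\kappa_2\|f_{\gamma/\sigma,r}-f\|_{\ell_p(X_\sigma)} - C_2\,\omega_s(f,\sigma^{-1})_p
\le \|f-G_\sigma f\|_p
\le \kappa_1\|f_{\gamma/\sigma,r}-f\|_{\ell_p(X_\sigma)} + C_1\,\omega_s(f,\sigma^{-1})_p.
$$
Next I will fix any $\varepsilon \in (0,\kappa_2/C_2)$ and exploit the hypothesis to choose $\sigma_0$ so that
$$
\max(C_1,C_2)\,\omega_s(f,\sigma^{-1})_p \le \varepsilon\,\|f_{\gamma/\sigma,r}-f\|_{\ell_p(X_\sigma)}\quad \text{for all }\sigma\ge \sigma_0.
$$
Inserting this into the displayed bound yields
$$
(\kappa_2-\varepsilon)\,\|f_{\gamma/\sigma,r}-f\|_{\ell_p(X_\sigma)}
\le \|f-G_\sigma f\|_p
\le (\kappa_1+\varepsilon)\,\|f_{\gamma/\sigma,r}-f\|_{\ell_p(X_\sigma)},
$$
with a strictly positive lower constant by the choice of $\varepsilon$, which is exactly the claimed asymptotic equivalence $\|f-G_\sigma f\|_p \sim \|f_{\gamma/\sigma,r}-f\|_{\ell_p(X_\sigma)}$.

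There is no genuine obstacle: once Theorem~\ref{th1+} is available, the statement reduces to an algebraic manipulation with the little-$o$ comparison. The only point worth flagging is a mild degeneracy issue, namely that the hypothesis implicitly presumes $\|f_{\gamma/\sigma,r}-f\|_{\ell_p(X_\sigma)}>0$ for all sufficiently large $\sigma$; in the degenerate case when this quantity vanishes eventually, the direct estimate~\eqref{th1.1'} together with $\omega_s(f,\sigma^{-1})_p \to 0$ forces $\|f-G_\sigma f\|_p \to 0$, so both sides of the claimed equivalence vanish and the assertion holds trivially.
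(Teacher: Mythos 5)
Your proof is correct and is essentially the argument the paper intends: the corollary is stated there as an immediate consequence of Theorem~\ref{th1+}, obtained exactly by stacking~\eqref{th1.1'} and~\eqref{th1.2'} and absorbing the $\w_s(f,\s^{-1})_p$ terms via the little-$o$ hypothesis. One cosmetic remark: with your normalization $\max(C_1,C_2)\,\w_s(f,\s^{-1})_p\le \varepsilon\,\|f_{\g/\s,r}-f\|_{\ell_p(X_\s)}$ the condition needed for a positive lower constant is $\varepsilon<\kappa_2$ rather than $\varepsilon<\kappa_2/C_2$, which changes nothing in substance.
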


The following result presents an analogue of Bernstein’s-type inverse inequality~\eqref{leJB.2} in the context of sampling operators.

\begin{theorem}\label{th3}
  Let $f\in L_p(\R^d)\cap \ell_p(X_\s)$, $1\le p<\infty$, and $r,s\in \N$. Suppose that  for all $\s>0$, the operator $G_\s$  satisfies assumptions~\eqref{c1}, \eqref{c1'}, and~\eqref{c2'} with $s\le 2r$ and that $G_\s f\in \Phi_\s$.
  Then
  \begin{equation}\label{th3.1}
  \begin{split}
         \|f_{\g/\s,\,r}-f\|_{\ell_p(X_\s)}&+\w_s(f,\s^{-1})_p\\
         &\le C\bigg(\|f-G_\s f\|_p+\frac1{\s^s}\sum_{\nu=0}^{[\s]}(\nu+1)^{s-1} \|f-G_\nu f\|_p\bigg).
  \end{split}
  \end{equation}
  where the constant $C$ does not depend on $f$ and $\s$.
\end{theorem}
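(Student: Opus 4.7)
\medskip
\noindent\textbf{Proof plan for Theorem~\ref{th3}.}

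The plan is to combine the reverse direction of Theorem~\ref{th1+} with the Bernstein-type inverse approximation inequality from Lemma~\ref{leJB}. The key observation is that the hypothesis $G_\nu f\in \Phi_\nu$ turns every sampling error into an upper bound for the error of best approximation by elements of $\Phi_\nu$:
\[
E(f,\Phi_\nu)_p\le \|f-G_\nu f\|_p,\qquad \nu>0.
\]

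First I would handle the modulus of smoothness piece. Since $\Phi=(\Phi_\s)_{\s\ge 0}$ satisfies the standing Jackson and Bernstein inequalities~\eqref{js}--\eqref{bs}, Lemma~\ref{leJB}(2) applies. Together with the previous displayed inequality, this gives
\[
\w_s(f,\s^{-1})_p\le \frac{C}{\s^s}\sum_{\nu=0}^{[\s]}(\nu+1)^{s-1}E(f,\Phi_\nu)_p\le \frac{C}{\s^s}\sum_{\nu=0}^{[\s]}(\nu+1)^{s-1}\|f-G_\nu f\|_p,
\]
which is exactly the modulus contribution to the right-hand side of~\eqref{th3.1}.

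Next I would handle the discrete term $\|f_{\g/\s,r}-f\|_{\ell_p(X_\s)}$. This is where Theorem~\ref{th1+} does the heavy lifting: inequality~\eqref{th1.2'}, rearranged, yields
\[
\|f_{\g/\s,r}-f\|_{\ell_p(X_\s)}\le \frac{1}{\kappa_2}\|f-G_\s f\|_p+\frac{C_2}{\kappa_2}\w_s(f,\s^{-1})_p.
\]
Substituting the bound on $\w_s(f,\s^{-1})_p$ obtained in the previous step, and noting that $\|f-G_\s f\|_p$ is already of the form appearing on the right-hand side of~\eqref{th3.1}, completes the estimate for $\|f_{\g/\s,r}-f\|_{\ell_p(X_\s)}$. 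Adding the two contributions gives~\eqref{th3.1}.

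I do not expect a serious obstacle here: all the machinery (both parts of Theorem~\ref{th1+} and the classical Jackson--Bernstein scheme encoded in Lemma~\ref{leJB}) has been set up precisely for this combination. The only point requiring care is that the hypothesis $G_\s f\in \Phi_\s$ is to be read as holding for every scale $\s>0$, so that the chain of estimates $E(f,\Phi_\nu)_p\le \|f-G_\nu f\|_p$ is available for each $\nu$ appearing in the Bernstein-type sum.
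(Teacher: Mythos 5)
Your proposal is correct and follows essentially the same route as the paper: the reverse inequality~\eqref{th1.2'} from Theorem~\ref{th1+}, the Bernstein-type inverse estimate~\eqref{leJB.2}, and the observation that $G_\nu f\in\Phi_\nu$ gives $E(f,\Phi_\nu)_p\le\|f-G_\nu f\|_p$ for each $\nu$. The only difference is the order in which the two terms are treated, which is immaterial.
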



The application of Theorems~\ref{th1+} and~\ref{th3} leads to the corollary below. 
\begin{corollary}\label{cor2}
Let $f\in L_p(\R^d)\cap \ell_p(X_\s)$, $1\le p<\infty$, $r,s\in \N$, $s\le 2r$, and $\a\in (0,s)$.  Suppose that for all $\s>0$, the operator $G_\s$ satisfies the conditions of Theorem~\ref{th3}. Then the following properties are equivalent:
    \begin{enumerate}
    \item[$(i)$]  $\|f-G_\s f\|_p=\mathcal{O}(\s^{-\a})$,
    \item[$(ii)$]  $\|f_{\g/\s,\,r}-f\|_{\ell_p(X_\s)}+\w_s(f,\s^{-1})_p=\mathcal{O}(\s^{-\a})$.
  \end{enumerate}
\end{corollary}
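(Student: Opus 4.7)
The plan is to derive each implication directly from the matching direct and inverse estimates, Theorems~\ref{th1+} and~\ref{th3}, the only nontrivial ingredient being an elementary summation inequality that converts the power decay of $\|f-G_\nu f\|_p$ into the required rate.

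For the implication $(ii)\Rightarrow(i)$, I would simply insert the assumed bound into the direct estimate~\eqref{th1.1'}: since both $\|f_{\g/\s,r}-f\|_{\ell_p(X_\s)}$ and $\w_s(f,\s^{-1})_p$ are $\mathcal{O}(\s^{-\a})$, inequality~\eqref{th1.1'} immediately yields $\|f-G_\s f\|_p=\mathcal{O}(\s^{-\a})$.

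For the converse $(i)\Rightarrow(ii)$, I would apply the inverse estimate~\eqref{th3.1}. Writing $\|f-G_\nu f\|_p\le A(\nu+1)^{-\a}$ for some constant $A$, the right-hand side of~\eqref{th3.1} is bounded, up to a constant, by
\begin{equation*}
\s^{-\a}+\frac{1}{\s^s}\sum_{\nu=0}^{[\s]}(\nu+1)^{s-1-\a}.
\end{equation*}
Since $0<\a<s$, the exponent $s-1-\a$ satisfies $s-1-\a>-1$, so by comparison with the corresponding integral,
\begin{equation*}
\sum_{\nu=0}^{[\s]}(\nu+1)^{s-1-\a}\lesssim (\s+1)^{s-\a},
\end{equation*}
with an implicit constant depending only on $s$ and $\a$. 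Substituting back gives $\s^{-s}\sum_{\nu=0}^{[\s]}(\nu+1)^{s-1-\a}\lesssim \s^{-\a}$, and hence the whole left-hand side of~\eqref{th3.1} is $\mathcal{O}(\s^{-\a})$, which is exactly statement $(ii)$.

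There is no real obstacle here; the entire work was carried out in Theorems~\ref{th1+} and~\ref{th3}. The only thing to verify with some care is that the summation bound above holds uniformly in $\s$ for the full range $\a\in(0,s)$, covering also the subcase $\a\ge s-1$ where the summand is decreasing; this is standard, and the condition $\a<s$ is both necessary and sufficient to close the argument.
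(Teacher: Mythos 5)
Your proposal is correct and follows exactly the route the paper intends: $(ii)\Rightarrow(i)$ is the direct estimate~\eqref{th1.1'} of Theorem~\ref{th1+}, and $(i)\Rightarrow(ii)$ is the inverse estimate~\eqref{th3.1} of Theorem~\ref{th3} combined with the standard bound $\sum_{\nu=0}^{[\s]}(\nu+1)^{s-1-\a}\lesssim \s^{s-\a}$ valid for $0<\a<s$. The paper gives no separate proof beyond citing these two theorems, so your write-up supplies precisely the omitted details.
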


The next result complements Corollary~\ref{cor1} by providing more precise sufficient conditions for determining the exact rate of convergence of $G_\s f$ in $L_p(\R^d)$.

\begin{theorem}\label{th1RG}
  Let $f\in L_p(\R^d)\cap \ell_p(X_\s)$, $1\le p<\infty$, and $r,s\in\N$. Suppose that for all $\s>0$, the operator $G_\s$ satisfies the conditions of Theorem~\ref{th3}.  If there exists a constant $K>0$ such that
\begin{equation}\label{r1}
\w_s(f,\d)_p\le K\w_{s+1}(f,\d)_p,\quad \d>0,
\end{equation}
then
  \begin{equation*}
     \|f-G_\s f\|_p\asymp\|f_{\g/\s,\,r}-f\|_{\ell_p(X_\s)}+\w_s(f,\s^{-1})_p,
  \end{equation*}
where $\asymp$ is a two-sided inequality with positive constants independent of $\s$.
\end{theorem}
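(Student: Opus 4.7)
The plan is to treat the two directions separately. The $\lesssim$ direction costs nothing: it is exactly the direct estimate~\eqref{th1.1'} of Theorem~\ref{th1+}, which uses~\eqref{c1} and~\eqref{c2'} but does not require hypothesis~\eqref{r1}. So all of the work lies in the reverse inequality
$$
\|f_{\g/\s,\,r}-f\|_{\ell_p(X_\s)}+\w_s(f,\s^{-1})_p\lesssim \|f-G_\s f\|_p.
$$

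To obtain this, I would combine two tools already in the paper. The first is the lower bound~\eqref{th1.2'} of Theorem~\ref{th1+}, which gives
$$
\kappa_2\|f_{\g/\s,\,r}-f\|_{\ell_p(X_\s)}\le \|f-G_\s f\|_p+C_2\w_s(f,\s^{-1})_p.
$$
The unwanted $\w_s$ term on the right is exactly what must be absorbed into $\|f-G_\s f\|_p$. The second tool is Lemma~\ref{lemR}, applied with the index $s$ in place of $r$: under hypothesis~\eqref{r1}, it yields $\w_s(f,\s^{-1})_p\le G\,E(f,\Phi_\s)_p$. Since the hypotheses of Theorem~\ref{th3} (inherited here) include $G_\s f\in\Phi_\s$, we have $E(f,\Phi_\s)_p\le \|f-G_\s f\|_p$, and therefore
$$
\w_s(f,\s^{-1})_p\lesssim \|f-G_\s f\|_p
$$
with a constant independent of $f$ and $\s$. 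Feeding this into the previous display yields $\|f_{\g/\s,\,r}-f\|_{\ell_p(X_\s)}\lesssim \|f-G_\s f\|_p$, and adding the two estimates completes the lower bound.

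I do not anticipate any genuine obstacle; the proof is a short two-step synthesis of Theorem~\ref{th1+} and Lemma~\ref{lemR}. The only point that deserves explicit verification is that Lemma~\ref{lemR} is truly available in the present setting: it was proved for a family $\Phi=(\Phi_\s)_{\s\ge 0}$ satisfying the Jackson and Bernstein inequalities~\eqref{js}--\eqref{bs}, and this is exactly the framework under which the hypothesis $G_\s f\in \Phi_\s$ of Theorem~\ref{th3} is read in Section~\ref{secAR}. Once that is noted, the rest is immediate.
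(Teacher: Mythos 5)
Your proposal is correct and follows essentially the same route as the paper: the upper bound is the direct estimate~\eqref{th1.1'}, and the lower bound combines~\eqref{th1.2'} (equivalently~\eqref{th3.2}) with Lemma~\ref{lemR} applied to the index $s$ and the trivial bound $E(f,\Phi_\s)_p\le\|f-G_\s f\|_p$ coming from $G_\s f\in\Phi_\s$. Nothing further is needed.
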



\subsection{$K$-functionals and their realizations}


In this section, we establish an analogue of equivalence~\eqref{eqvK} for the "semi-discrete" modification of the Peetre $K$-functional
\begin{equation*}
  \mathcal{K}_s(f,X_\s)_p=\inf_{g\in W_p^s(\R^d)}\(\|f-g\|_{\ell_p(X_\s)}+\|f-g\|_p+\s^{-s}|g|_{W_p^s}\).
\end{equation*}

\begin{theorem}\label{thKw}
  Let $f\in L_p(\R^d)\cap \ell_p(X_\s)$, $1\le p<\infty$, $r,s\in \N$, $s\le 2r$, and $s>d/p$.
  Then
  \begin{equation}\label{thKw.1}
     \mathcal{K}_s(f,X_\s)_p\asymp \|f_{\g/\s,\,r}-f\|_{\ell_p(X_\s)}+\w_s(f,\s^{-1})_p,
  \end{equation}
where $\asymp$ is a two-sided inequality with positive constants independent of $f$ and $\s$.
\end{theorem}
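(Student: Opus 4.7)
The plan is to prove the two inequalities in~\eqref{thKw.1} separately. For the upper bound, I would test $\mathcal{K}_s$ against a near-best band-limited approximant, and for the lower bound, I would use the linearity of the averaging operator to reduce to the same two auxiliary estimates.

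For the upper bound, Lemma~\ref{leJB+} furnishes $g_\s \in \mathcal{B}_p^\s(\R^d) \subset W_p^s(\R^d)$ with $\|f-g_\s\|_p \lesssim \w_s(f,\s^{-1})_p$ and $\s^{-s}|g_\s|_{W_p^s}\lesssim \w_s(f,\s^{-1})_p$. Using $g_\s$ as a test function in the infimum defining $\mathcal{K}_s$, it only remains to control $\|f-g_\s\|_{\ell_p(X_\s)}$, which I would split as $\|f-f_{\g/\s,r}\|_{\ell_p(X_\s)}+\|f_{\g/\s,r}-g_\s\|_{\ell_p(X_\s)}$ and further decompose the second summand via $f_{\g/\s,r}-g_\s = (f-g_\s)_{\g/\s,r}+((g_\s)_{\g/\s,r}-g_\s)$. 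For the lower bound, given arbitrary $g \in W_p^s(\R^d)$, properties~$(b)$ and~$(e)$ of the modulus of smoothness immediately give $\w_s(f,\s^{-1})_p \le 2^s\|f-g\|_p+\s^{-s}|g|_{W_p^s}$, while the identity $f_{\g/\s,r}-f = (f-g)_{\g/\s,r}+(g_{\g/\s,r}-g)-(f-g)$, obtained from the linearity of $(\cdot)_{\g/\s,r}$, reduces the discrete term to the same pieces.

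Both directions thus reduce to two auxiliary bounds. First, $\|h_{\g/\s,r}\|_{\ell_p(X_\s)} \lesssim \|h\|_p$ for $h \in L_p(\R^d)$: this follows from H\"older's inequality applied inside each ball average appearing in~\eqref{SB3}, combined with the disjointness of the balls $B_{\g/\s}(\xi)$, $\xi \in X_\s$, guaranteed by the separation condition on $X_\s$. Second, $\|g_{\g/\s,r}-g\|_{\ell_p(X_\s)} \lesssim \s^{-s}|g|_{W_p^s}$ for $g \in W_p^s(\R^d)$: the latter follows from $\|g_{\g/\s,r}-g\|_p \le C\w_{2r}(g,\g/\s)_p \lesssim (\g/\s)^s|g|_{W_p^s}$ (using $s \le 2r$ together with properties~$(c)$ and~$(e)$) and the trivial bound $|g_{\g/\s,r}-g|_{W_p^s}\lesssim |g|_{W_p^s}$ (since differentiation commutes with the convolution-type averaging), combined with a discrete sampling inequality of the form $\|h\|_{\ell_p(X_\s)} \le C(\|h\|_p+\s^{-s}|h|_{W_p^s})$, valid for $h \in W_p^s(\R^d)$ whenever $s>d/p$.

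The main obstacle is establishing this last sampling inequality with constants uniform in $\s$; this is the only place where the hypothesis $s>d/p$ enters. It can be handled by rescaling the Sobolev embedding $W_p^s(B_1)\hookrightarrow C(B_1)$ to each ball $B_{\g/\s}(\xi)$ and summing the resulting local bounds, once more exploiting disjointness. Once this sampling inequality is in place, both directions of~\eqref{thKw.1} reduce to straightforward bookkeeping built on Lemma~\ref{leJB+} and the standard properties of moduli of smoothness and of the averaging operator $f_{\d,r}$.
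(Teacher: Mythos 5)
Your overall architecture coincides with the paper's: the same near-best band-limited $g_\s$ from Lemma~\ref{leJB+} tests the infimum for the upper bound, the same three-term splitting of $\|f-g_\s\|_{\ell_p(X_\s)}$ (resp.\ of $\|f_{\d,r}-f\|_{\ell_p(X_\s)}$ for arbitrary $g\in W_p^s$) is used, and your first auxiliary bound $\|h_{\d,r}\|_{\ell_p(X_\s)}\lesssim\|h\|_p$ is exactly the paper's H\"older-plus-disjointness estimate~\eqref{th1.5'}. Where you genuinely diverge is the second auxiliary bound $\|g_{\d,r}-g\|_{\ell_p(X_\s)}\lesssim \d^s|g|_{W_p^s}$: the paper obtains it through the $\tau$-modulus machinery, namely Proposition~\ref{cor1+} followed by property $(f')$ and then $(c)$, $(e)$, with $s>d/p$ entering through the convergence of $\int_0^\d t^{s-d/p-1}\,dt$; you instead propose a rescaled local Sobolev embedding $W_p^s(B_{\g/\s}(\xi))\hookrightarrow C$ summed over the disjoint balls, with $s>d/p$ entering through the embedding itself. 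Both are legitimate, and yours avoids the averaged-moduli apparatus entirely (the paper's Remark~\ref{remKK} gives yet a third route via an operator $G_\s$). One detail you should not gloss over: the rescaled embedding produces \emph{all} intermediate terms $\d^{|\a|}\|D^\a h\|_{L_p}$, $0\le|\a|\le s$, not just the endpoints $\|h\|_p$ and $\d^s|h|_{W_p^s}$, so your sampling inequality as stated needs either a Landau--Kolmogorov interpolation of the intermediate derivatives or, for the specific $h=g_{\d,r}-g$, the direct estimate $\|D^\a(g_{\d,r}-g)\|_p=\|(D^\a g)_{\d,r}-D^\a g\|_p\lesssim \w_{s-|\a|}(D^\a g,\d)_p\lesssim\d^{s-|\a|}|g|_{W_p^s}$, which makes every intermediate term admissible. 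With that filled in, your argument is complete; the remaining steps (properties $(b)$, $(e)$ for the modulus part of the lower bound, in place of the paper's appeal to~\eqref{eqvK}) are equivalent bookkeeping.
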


It is known (see, e.g.,~\cite{HI90}) that the modulus of smoothness is equivalent to the so-called realization of the $K$-functional:
\begin{equation}\label{Re}
  \w_s(f,\s^{-1})_p\asymp \|f-P_\s f\|_p+\s^{-s}|P_\s f|_{W_p^s},\quad f\in L_p(\R^d),
\end{equation}
where $P_\s f$ is a suitable approximation of a function $f$ satisfying the Jackson-type inequality $\|f-P_\s f\|_p\le c(s,p,d)\w_s(f,\s^{-1})_p$, and $\asymp$ denotes a two-sided inequality with positive constants independent of $f$ and $\s$.
For various applications of realizations of $K$-functionals see e.g.,~\cite{HI90}, \cite{KT20}--\cite{KT21}.
It is clear that~\eqref{Re} does not generally hold for sampling operators.  However, by incorporating the quantity $\|f_{\g/\s,\,r}-f\|_{\ell_p(X_\s)}$ into the corresponding modulus of smoothness, we can establish an analogue of the equivalence~\eqref{Re} for certain operators $G_\s$.


\begin{theorem}\label{th4} Let $f\in L_p(\R^d)\cap \ell_p(X_\s)$, $1\le p<\infty$, and $r,s\in \N$. Suppose that $G_\s$ satisfies conditions~\eqref{c1}, \eqref{c1'}, and
\begin{equation*}
  \|g-G_\s g\|_p\le \kappa_3\s^{-s}|g|_{W_p^s}, \quad g\in W_p^s(\R^d).
\end{equation*}
Suppose also that $G_\s f\in \Phi_\s\cap W_p^s(\R^d)$ and that the Bernstein-type inequality~\eqref{bebe2} holds for elements of~$\Phi_\s$.
Then
\begin{equation}\label{th4.1}
  \|f_{\g/\s,\,r}-f\|_{\ell_p(X_\s)}+\w_s(f,\s^{-1})_p\asymp \|f-G_\s f\|_p+\s^{-s}|G_\s f|_{W_p^s},
\end{equation}
where $\asymp$ is a two-sided inequality with positive constants independent of $f$ and $\s$.
\end{theorem}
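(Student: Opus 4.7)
The plan is to assemble the equivalence from three ingredients already available in the paper: the two-sided error estimate of Theorem~\ref{th1+}, the Bernstein-type inequality of Lemma~\ref{lns}, and two elementary facts about the modulus of smoothness, namely $\w_s(h,\d)_p\le 2^s\|h\|_p$ (straight from the definition) and property~$(e)$, $\w_s(g,\d)_p\le \d^s|g|_{W_p^s}$. The strengthened Jackson-type hypothesis $\|g-G_\s g\|_p\le\kappa_3\s^{-s}|g|_{W_p^s}$ for all $g\in W_p^s(\R^d)$ subsumes the band-limited version~\eqref{c2'}, so Theorem~\ref{th1+} is available; and the hypothesis $G_\s f\in\Phi_\s\cap W_p^s(\R^d)$ together with~\eqref{bebe2} makes Lemma~\ref{lns} applicable to $G_\s f$.

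For the direction ``$\mathrm{LHS}\lesssim \mathrm{RHS}$'', I would first apply the inverse estimate~\eqref{th1.2'} to get $\|f_{\g/\s,r}-f\|_{\ell_p(X_\s)}\lesssim \|f-G_\s f\|_p+\w_s(f,\s^{-1})_p$. For the modulus piece, I would split $f=(f-G_\s f)+G_\s f$ inside $\w_s(f,\cdot)_p$ and combine the two elementary inequalities above to obtain $\w_s(f,\s^{-1})_p\le 2^s\|f-G_\s f\|_p+\s^{-s}|G_\s f|_{W_p^s}$. Adding these two bounds yields the desired upper estimate for the LHS.

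For the converse direction, the direct estimate~\eqref{th1.1'} immediately gives $\|f-G_\s f\|_p\lesssim \|f_{\g/\s,r}-f\|_{\ell_p(X_\s)}+\w_s(f,\s^{-1})_p$. For the Sobolev term I would invoke Lemma~\ref{lns} to deduce $\s^{-s}|G_\s f|_{W_p^s}\lesssim \w_s(G_\s f,\s^{-1})_p$, then split once more to get $\w_s(G_\s f,\s^{-1})_p\le 2^s\|G_\s f-f\|_p+\w_s(f,\s^{-1})_p$, and finally substitute the preceding bound on $\|f-G_\s f\|_p$ to close the estimate.

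\textbf{Main obstacle.} There is no deep analytic difficulty, since the essential work has already been done in Theorem~\ref{th1+} and Lemma~\ref{lns}. The only thing needing care is the bookkeeping of hypotheses: checking that the strengthened Jackson-type condition on $W_p^s(\R^d)$ indeed implies~\eqref{c2'} (so that Theorem~\ref{th1+} may be invoked), and that the Bernstein-type condition~\eqref{bebe2} on $\Phi_\s$ together with $G_\s f\in\Phi_\s\cap W_p^s(\R^d)$ unlocks Lemma~\ref{lns}, which is the only tool that converts the Sobolev semi-norm $\s^{-s}|G_\s f|_{W_p^s}$ back into the modulus $\w_s(G_\s f,\s^{-1})_p$.
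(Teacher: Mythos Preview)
Your proposal is correct and follows essentially the same approach as the paper's proof: both directions are obtained by combining the two-sided estimate of Theorem~\ref{th1+} with Lemma~\ref{lns}, and the only cosmetic difference is that where the paper cites the $K$-functional equivalence~\eqref{eqvK} to bound $\w_s(f,\s^{-1})_p$ by $\|f-G_\s f\|_p+\s^{-s}|G_\s f|_{W_p^s}$, you unpack that same step into the two elementary inequalities $\w_s(h,\d)_p\le 2^s\|h\|_p$ and property~$(e)$.
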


\subsection{Smoothness of sampling operators}\label{S3}
Now we consider certain interrelations between moduli of
smoothness of functions  and smoothness properties
of sampling operators. Equivalence~\eqref{th4.1} implies that
\begin{equation*}
 \s^{-s}|G_\s f|_{W_p^s} \le C\(\|f_{\g/\s,r}-f\|_{\ell_p(X_\s)}+\w_s(f,\s^{-1})_p\),
\end{equation*}
where the constant $C$ is independent of $f$ and $\s$.
In the next theorem, we establish a corresponding inverse estimate.

\begin{theorem}\label{th5}
Let $f\in L_p(\R^d)\cap \ell_p(X_\s)$, $1\le p<\infty$, and let $f$ satisfy~\eqref{conv} for given $r,s\in \N$. Suppose that for all $\s>0$, the operator  $G_\s$  satisfies conditions~\eqref{c1}, \eqref{c1'}, \eqref{c2'} with $s\le 2r$, and that $G_\s f\in \Phi_\s \cap W_p^s(\R^d)$.
Suppose also that $G_\s f(\xi)=f(\xi)$ for all $\xi\in X_\s$, and $X_\s\subset X_{2\s}$. Then
\begin{equation}\label{th5.1}
  \|f_{\g/\s,\,r}-f\|_{\ell_p(X_\s)}+\w_s(f,\s^{-1})_p\le C\sum_{k=0}^\infty (\s2^k)^{-s}|G_{2^k \s}f|_{W_p^s},
\end{equation}
where the constant $C$ is independent of  $f$ and $\s$.
\end{theorem}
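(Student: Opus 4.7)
The plan is to reduce~\eqref{th5.1} to an estimate on $\|f-G_\s f\|_p$ alone, then prove that estimate via dyadic telescoping along $(2^k\s)_{k\ge 0}$, with each telescope piece controlled by the Sobolev semi-norm through the interpolation property on the nested grids. The inverse inequality~\eqref{th1.2'} of Theorem~\ref{th1+} gives $\|f_{\g/\s,r}-f\|_{\ell_p(X_\s)}\lesssim\|f-G_\s f\|_p+\w_s(f,\s^{-1})_p$, while subadditivity of $\w_s$ combined with property~$(e)$ of moduli applied to $G_\s f\in W_p^s$ gives $\w_s(f,\s^{-1})_p\le 2^s\|f-G_\s f\|_p+\s^{-s}|G_\s f|_{W_p^s}$. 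Since $\s^{-s}|G_\s f|_{W_p^s}$ is exactly the $k=0$ summand of the right-hand side of~\eqref{th5.1}, it remains to show
\begin{equation*}
  \|f-G_\s f\|_p\lesssim\sum_{k=0}^{\infty}(\s 2^k)^{-s}|G_{2^k\s}f|_{W_p^s}.
\end{equation*}

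By~\eqref{conv} and Corollary~\ref{th2}, $G_\mu f\to f$ in $L_p$, so the telescoping $f-G_\s f=\sum_{k\ge 0}h_k$ with $h_k:=G_{2^{k+1}\s}f-G_{2^k\s}f$ converges in $L_p$. The interpolation property $G_\mu f(\xi)=f(\xi)$ on $X_\mu$, together with $X_{2^k\s}\subset X_{2^{k+1}\s}$, forces $h_k\equiv 0$ on $X_{2^k\s}$, while $h_k$ lies in a subspace of bandwidth $\asymp 2^{k+1}\s$ via $\Phi_\s+\Phi_\s\subset\Phi_{c\s}$. The heart of the argument is the estimate
\begin{equation*}
  \|h_k\|_p\lesssim (2^k\s)^{-s}|h_k|_{W_p^s},
\end{equation*}
because then $|h_k|_{W_p^s}\le|G_{2^{k+1}\s}f|_{W_p^s}+|G_{2^k\s}f|_{W_p^s}$ and summation over $k$ (with an index shift on the first term absorbing a $2^s$ factor) delivers the required bound.

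To prove the key estimate, I would select $p_k^*\in\Phi_{2^k\s}$ realising the Jackson bound~\eqref{js} for $h_k$, so that $\|h_k-p_k^*\|_p\lesssim(2^k\s)^{-s}|h_k|_{W_p^s}$. Because $h_k$ vanishes on $X_{2^k\s}$, one has $\|p_k^*\|_{\ell_p(X_{2^k\s})}=\|h_k-p_k^*\|_{\ell_p(X_{2^k\s})}$. Two Marcinkiewicz--Zygmund-type sampling inequalities then close the argument: a lower one $\|p_k^*\|_p\lesssim\|p_k^*\|_{\ell_p(X_{2^k\s})}$ for elements of $\Phi_{2^k\s}$, obtained from the fact that the interpolating map $G_{2^k\s}$ reproduces $\Phi_{2^k\s}$ combined with~\eqref{c1}, and an upper one $\|h_k-p_k^*\|_{\ell_p(X_{2^k\s})}\lesssim\|h_k-p_k^*\|_p$ of Plancherel--P\'olya type, whose constant depends only on the bounded oversampling ratio between the grid density $\sim 1/(2^k\s)$ and the bandwidth of $h_k-p_k^*$.

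The main obstacle is exactly this Marcinkiewicz--Zygmund transfer. In the band-limited case $\Phi_\s=\mathcal{B}_p^\s$ both sampling inequalities are classical Plancherel--P\'olya estimates, but in the abstract $\Phi$-setting of Theorem~\ref{th5} we only have~\eqref{c2'}, controlling $\|g-G_\s g\|_p$ for $g\in\mathcal{B}_p^{\l\s}$ with $\l\in(0,1]$. Bridging the mismatch between $\Phi_{2^k\s}$ and $\mathcal{B}_p^{\l\cdot 2^k\s}$, uniformly in $k$, requires an auxiliary low/high frequency decomposition in which the high-frequency residue is absorbed into the Sobolev semi-norm of $h_k$; this is the delicate analytic step on which the proof hinges.
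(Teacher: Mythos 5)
Your overall architecture matches the paper's proof: the reduction of \eqref{th5.1} to a bound on $\|f-G_\s f\|_p$ via \eqref{th1.2'} and the realization of $\w_s$ through $G_\s f\in W_p^s$, the dyadic telescoping along $(2^k\s)_{k\ge0}$ using \eqref{conv}, and the observation that $h_k=G_{2^{k+1}\s}f-G_{2^k\s}f$ vanishes on $X_{2^k\s}$. However, the step you yourself call the heart of the argument --- $\|h_k\|_p\lesssim (2^k\s)^{-s}|h_k|_{W_p^s}$, to be obtained from a Jackson approximant $p_k^*$ and two Marcinkiewicz--Zygmund inequalities --- is a genuine gap. Neither sampling inequality is available from the hypotheses: \eqref{c1} and \eqref{c1'} control $G_\s$ acting on sampled data, not arbitrary elements of $\Phi_\s$, $G_\s$ is not assumed to reproduce $\Phi_\s$, and the low/high-frequency bridge between $\Phi_{2^k\s}$ and $\mathcal{B}_p^{\l\cdot 2^k\s}$, uniformly in $k$, is exactly the piece you acknowledge you cannot supply. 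As written, the proposal does not close.

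The missing idea is a one-line identity that makes the whole Marcinkiewicz--Zygmund machinery unnecessary. Since $G_{2^{k+1}\s}f(\xi)=f(\xi)$ for $\xi\in X_{2^{k+1}\s}\supset X_{2^k\s}$, the operator $G_{2^k\s}$ receives the same samples from $G_{2^{k+1}\s}f$ as from $f$, i.e. $G_{2^k\s}(G_{2^{k+1}\s}f)=G_{2^k\s}f$. Hence, with $g_k:=G_{2^{k+1}\s}f\in W_p^s(\R^d)$,
\begin{equation*}
  h_k=g_k-G_{2^k\s}g_k,\qquad \|h_k\|_p\le \kappa_3 (2^k\s)^{-s}|G_{2^{k+1}\s}f|_{W_p^s},
\end{equation*}
by the assumed Jackson-type inequality for $G_{2^k\s}$ applied directly to $g_k$ (this is how the paper argues; strictly speaking one needs the $W_p^s$-form of \eqref{c2'}, as in Theorem~\ref{th4}, or that $\Phi_{2^{k+1}\s}$ lies in the admissible class). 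Summing over $k$ and shifting the index yields $\|f-G_\s f\|_p\le C\sum_{k\ge1}(2^k\s)^{-s}|G_{2^k\s}f|_{W_p^s}$, which combined with your first reduction gives \eqref{th5.1}. Note in particular that the natural target is $|G_{2^{k+1}\s}f|_{W_p^s}$, not $|h_k|_{W_p^s}$: aiming at the latter is what forced you into the unprovable Nikolskii-type bound for functions merely vanishing on a separated set.
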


Using Theorems~\ref{th5} and~\ref{th4}, we refine Corollary~\ref{cor2} as follows.


\begin{corollary}\label{cor3}
  Let $f\in L_p(\R^d)\cap \ell_p(X_\s)$, $1\le p<\infty$, $r,s\in \N$, $s\le 2r$, and $\a\in (0,s)$. Assume that for all $\s>0$, the operator  $G_\s$ satisfies the conditions of Theorem~\ref{th5}. Then the following properties are equivalent:
  \begin{enumerate}
    \item[$(i)$]  $\|f-G_\s f\|_p=\mathcal{O}(\s^{-\a})$,
    \item[$(ii)$]  $\|f_{\g/\s,\,r}-f\|_{\ell_p(X_\s)}+\w_s(f,\s^{-1})_p=\mathcal{O}(\s^{-\a})$,
    \item[$(iii)$] $|G_\s f|_{W_p^s}=\mathcal{O}(\s^{s-\a})$.
  \end{enumerate}
If, additionally, $s>d/p$, then the above properties are equivalent to
  \begin{enumerate}
    \item[$(iv)$]  $\mathcal{K}_s(f,X_\s)_p=\mathcal{O}(\s^{-\a})$.
  \end{enumerate}
\end{corollary}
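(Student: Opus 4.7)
The plan is to close the equivalences $(i)\Leftrightarrow(ii)\Leftrightarrow(iii)$ cyclically by combining results already proved in the paper with one short dyadic Bernstein argument, and then to deal with $(iv)$ separately under the extra assumption $s>d/p$.

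First, $(i)\Leftrightarrow(ii)$ is exactly Corollary~\ref{cor2}, whose hypotheses are contained in the conditions of Theorem~\ref{th5}. Next, $(iii)\Rightarrow(ii)$ follows at once from Theorem~\ref{th5}: its conclusion bounds the left-hand side of $(ii)$ by $C\sum_{k\ge 0}(\s 2^k)^{-s}|G_{2^k\s}f|_{W_p^s}$, and inserting $(iii)$ turns this into $C\s^{-\a}\sum_{k\ge 0} 2^{-k\a}$, a convergent geometric series since $\a>0$, hence the whole expression is $\mathcal{O}(\s^{-\a})$.

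The main obstacle is $(ii)\Rightarrow(iii)$. From $(ii)$, Theorem~\ref{th1+} immediately supplies $(i)$, so $\|f-G_\nu f\|_p=\mathcal{O}(\nu^{-\a})$ for every $\nu>0$. A direct Bernstein estimate $|G_\s f|_{W_p^s}\lesssim \s^s\|G_\s f\|_p$ would only give $\mathcal{O}(\s^s)$, so I would instead use a dyadic telescope: fix a reference scale $\s_0>0$ and, setting $\s_k=2^k\s_0$ with $K$ chosen so that $\s_K\asymp\s$, write
$$
G_\s f=G_{\s_0}f+\sum_{k=0}^{K-1}\bigl(G_{\s_{k+1}}f-G_{\s_k}f\bigr).
$$
Exploiting the grading of $\Phi$, each difference lies in some $\Phi_{c\s_{k+1}}$, so the Bernstein inequality~\eqref{bs} together with $(i)$ yields $|G_{\s_{k+1}}f-G_{\s_k}f|_{W_p^s}\lesssim \s_{k+1}^s\,\s_k^{-\a}\asymp \s_k^{s-\a}$. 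Since $s-\a>0$, the dyadic sum is dominated by its largest term and gives $|G_\s f|_{W_p^s}=\mathcal{O}(\s^{s-\a})$, with the fixed-scale base term $|G_{\s_0}f|_{W_p^s}$ absorbed as $O(1)$. The hypothesis $\a<s$ is precisely what makes this geometric telescope convergent.

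Finally, the equivalence of $(ii)$ with $(iv)$ under $s>d/p$ is exactly Theorem~\ref{thKw}, which identifies $\mathcal{K}_s(f,X_\s)_p$ with the quantity in $(ii)$ up to equivalence constants independent of $f$ and $\s$.
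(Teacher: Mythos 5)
Your proof is correct, and for three of the four implications it coincides with the paper's intended route: $(i)\Leftrightarrow(ii)$ is Corollary~\ref{cor2}, $(iii)\Rightarrow(ii)$ is Theorem~\ref{th5} plus the geometric series (convergent because $\a>0$), and $(ii)\Leftrightarrow(iv)$ is Theorem~\ref{thKw}. Where you genuinely diverge is $(ii)\Rightarrow(iii)$. The paper (as announced just before the corollary) obtains this from the lower bound in Theorem~\ref{th4}, i.e.\ from the realization $\|f-G_\s f\|_p+\s^{-s}|G_\s f|_{W_p^s}\lesssim \|f_{\g/\s,r}-f\|_{\ell_p(X_\s)}+\w_s(f,\s^{-1})_p$, whose proof goes through Lemma~\ref{lns} and therefore needs the extra Bernstein-type hypothesis~\eqref{bebe2} on $\Phi_\s$ (and a Jackson inequality on all of $W_p^s$), neither of which is literally among the conditions of Theorem~\ref{th5}. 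Your dyadic telescope $G_\s f=G_{\s_0}f+\sum_k(G_{\s_{k+1}}f-G_{\s_k}f)$ instead uses only the standing Bernstein inequality~\eqref{bs} for $\Phi$, the grading/nestedness of the family, and the already-established $(i)$, with the sum $\sum_k\s_k^{s-\a}$ dominated by its top term since $\a<s$; this is a more elementary argument that matches the corollary's stated hypotheses more faithfully, at the cost of not also producing the two-sided realization~\eqref{th4.1} that Theorem~\ref{th4} delivers. The only points worth making explicit are that the nestedness $\Phi_\s\subset\Phi_{\s'}$ for $\s\le\s'$ (part of the standard properties cited from DeVore--Lorentz) is what puts each telescoping difference into $\Phi_{c\s_{k+1}}$, and that the hypothesis~\eqref{conv} on $f$ from Theorem~\ref{th5}, needed for $(iii)\Rightarrow(ii)$, is available in your setting exactly as in the paper's.
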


\subsection{More general measures of smoothness}

In this section, we extend the results of Sections~\ref{S1}--\ref{S3} to more general measures of smoothness.
To this end, we consider a general translation-invariant smooth Banach space $Y_p^s(\R^d)$, $1\le p<\infty$ and $s>0$, endowed with the semi-norm $|\cdot|_{Y_p^s}$. Typical examples are the Besov and Triebel-Lizorkin spaces $F_{p,q}^s(\R^d)$, see, e.g.,~\cite{ST86}, \cite{SS00}.

Recall that the $K$-functional of the pair $(L_p, Y_p^s)$ is defined by
$$
K(f,t,L_p, Y_p^s)=\inf_{g\in Y_p^s}\(\|f-g\|_p+t|g|_{Y_p^s}\).
$$
Note that for all $s\in \N$ and $1\le p<\infty$, we have $K_s(f,\d)_p=K(f,\d,L_p, W_p^s)$.

In what follows, we assume that the semi-norm $|\cdot|_{Y_p^s}$ satisfies the following properties:
\begin{itemize}
  \item[-] for given $s>0$ and $r\in \N$ with $0<s\le 2r$ and $1\le p<\infty$, there exists a constant $C$ such that
    \begin{equation*}
      |g|_{W_p^{2r}}\le C \s^{2r-s}|g|_{Y_p^{s}},\quad g\in \mathcal{B}_p^\s(\R^d);
    \end{equation*}
  \item[-]  for given  $s>0$ and $1\le p<\infty$, there exists a constant $C$ such that
     \begin{equation}\label{prY2}
      |g|_{Y_p^s}\le C\s^s K(g,\s^{-s} ,L_p, Y_p^s),\quad g\in \mathcal{B}_p^\s(\R^d).
    \end{equation}
\end{itemize}

Let $g_\s\in \mathcal{B}_p^\s(\R^d)$ be such that $\|f-g_\s\|_p\le c E_\s(f)_p$ with some constant $c$ independent of~$f$ and $\s$. It follows from~\eqref{prY2}, see, e.g.,~\cite{HI90}, that
\begin{equation}\label{prY3}
  K(f,t,L_p, Y_p^s)\asymp \|f-g_\s\|_p+t|g_\s|_{Y_p^s},\quad t>0, \quad f\in L_p(\R^d),
\end{equation}
where $\asymp$ is a two-sided inequality with positive constants independent of $f$ and $\s$.
For our purposes, we need also the following generalization of assumption~\eqref{c2'}:
\begin{equation}\label{c2'Y}\tag{$A_3'$}
\text{there exists}\,\, s\in \N\,\, \text{and}\,\, \l \in (0,1]\,\, \text{such that}
\\
  \phantom{\|\vp-G_\s \vp\|_p\le \kappa_3\s^{-s}|\vp|_{Y_p^s}, \quad \vp\in \mathcal{B}_p^{\l\s}(\R^d),}
\end{equation}
$$
\|\vp-G_\s \vp\|_p\le \kappa_3\s^{-s}|\vp|_{Y_p^s}, \quad \vp\in \mathcal{B}_p^{\l\s}(\R^d),
$$
where $\kappa_3=\kappa_3(\l, s,p,d)$ is a positive constant.

\smallskip

The following two theorems can be proved repeating step by step the proofs of Theorems~\ref{th1+} and~\ref{th3}, respectively

\begin{theorem}\label{th1+Y}
  Let $f\in L_p(\R^d)$, $1\le p<\infty$, and $r\in \N$. Suppose $G_\s$ satisfies~\eqref{c1} and~\eqref{c2'Y} with $0<s\le 2r$. Then
  \begin{equation*}
     \|f-G_\s f\|_p\le \kappa_1\|f_{\g/\s,r}-f\|_{\ell_p(X_\s)}+{C_1}K(f,\s^{-s},L_p, Y_p^s).
  \end{equation*}
If, additionally, $f\in \ell_p(X_\s)$ and~\eqref{c1'} holds, then
  \begin{equation}\label{th1.2'Y}
     \kappa_2\|f_{\g/\s,r}-f\|_{\ell_p(X_\s)}-{C_2}K(f,\s^{-s},L_p, Y_p^s)\le \|f-G_\s f\|_p.
  \end{equation}
Here, $C_1$ and $C_2$ are  positive constants independent of $f$ and $\s$.
\end{theorem}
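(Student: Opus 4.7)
The plan is to transpose the argument of Theorem~\ref{th1+} to the abstract smoothness space $Y_p^s$, with the modulus $\w_s(f,\s^{-1})_p$ replaced everywhere by the $K$-functional $K(f,\s^{-s},L_p, Y_p^s)$. Assumption~\eqref{c2'Y} plays the role of~\eqref{c2'}, and the two Bernstein-type hypotheses on $|\cdot|_{Y_p^s}$ together with the realization~\eqref{prY3} of the $K$-functional replace the appeals to Lemma~\ref{leJB+} in the original proof.

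For the direct estimate, I would fix a near-best $L_p$-approximant $\vp\in \mathcal{B}_p^{\l\s}(\R^d)$ of $f$, so that by~\eqref{prY3} one has $\|f-\vp\|_p+\s^{-s}|\vp|_{Y_p^s}\asymp K(f,\s^{-s},L_p, Y_p^s)$. Writing
$$
f - G_\s f = (f-\vp) - G_\s(f-\vp) + (\vp - G_\s \vp),
$$
the first summand is controlled in $L_p$ by the $K$-functional, the last by $\kappa_3\s^{-s}|\vp|_{Y_p^s}$ via~\eqref{c2'Y}, and the middle by $\kappa_1\|f-\vp\|_{\ell_p(X_\s)}$ via~\eqref{c1}. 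The semi-discrete norm is then split through $f_{\g/\s,r}$:
$$
\|f-\vp\|_{\ell_p(X_\s)} \le \|f-f_{\g/\s,r}\|_{\ell_p(X_\s)} + \|f_{\g/\s,r}-\vp\|_{\ell_p(X_\s)},
$$
and the second summand is further split by inserting $\vp_{\g/\s,r}$. The band-limited piece $\vp-\vp_{\g/\s,r}\in\mathcal{B}_p^{\l\s}$ is controlled on $X_\s$ by a Plancherel--Polya type inequality and then by $\s^{-s}|\vp|_{Y_p^s}$ using $\|\vp-\vp_{\g/\s,r}\|_p\lesssim (\g/\s)^{2r}|\vp|_{W_p^{2r}}$ combined with the first Bernstein-type inequality for $Y_p^s$ (the assumption $s\le 2r$ is crucial here). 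The remaining difference $\vp_{\g/\s,r}-f_{\g/\s,r}=(\vp-f)_{\g/\s,r}$ is treated pointwise by Jensen's inequality on the balls $B_{\g/\s}(\xi)$, which are pairwise disjoint by the separation of $X_\s$, yielding $\|\vp_{\g/\s,r}-f_{\g/\s,r}\|_{\ell_p(X_\s)}\lesssim \|\vp-f\|_p$.

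For the inverse estimate I would apply~\eqref{c1'} to $f_{\g/\s,r}-f$, which lies in $L_p\cap\ell_p(X_\s)$ by the disjoint-balls bound above, obtaining
$$
\kappa_2\|f_{\g/\s,r}-f\|_{\ell_p(X_\s)} \le \|G_\s f_{\g/\s,r} - f_{\g/\s,r}\|_p + \|f_{\g/\s,r}-f\|_p + \|f-G_\s f\|_p.
$$
The first right-hand term is bounded by $K(f,\s^{-s},L_p, Y_p^s)$ by re-introducing $\vp$ and $G_\s\vp$ and reusing the direct-estimate ingredients, while $\|f_{\g/\s,r}-f\|_p$ is controlled by the same $K$-functional through $\|\vp-\vp_{\g/\s,r}\|_p\lesssim \s^{-s}|\vp|_{Y_p^s}$ together with the $L_p$-contraction of the averaging operator. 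Rearranging yields~\eqref{th1.2'Y}. The main obstacle is the semi-discrete bound $\|\vp-f_{\g/\s,r}\|_{\ell_p(X_\s)}\lesssim K(f,\s^{-s},L_p, Y_p^s)$: the detour through $\vp_{\g/\s,r}$ is essential because it isolates a band-limited piece (accessible to Plancherel--Polya) and a difference of averaged operators (accessible to the disjoint-ball Jensen estimate), and it is precisely at this point that both Bernstein-type hypotheses on $Y_p^s$ and the condition $s\le 2r$ enter the argument.
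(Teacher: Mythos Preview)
Your proposal is correct, and the inverse estimate~\eqref{th1.2'Y} is handled exactly as in the paper. For the direct estimate, however, you use a genuinely different decomposition: you insert the band-limited approximant $\vp$ first, writing $f-G_\s f=(f-\vp)+(\vp-G_\s\vp)-G_\s(f-\vp)$ and applying~\eqref{c2'Y} directly to $\vp$, whereas the paper (following the proof of Theorem~\ref{th1+} verbatim) inserts the averaged function $f_{\g/\s,r}$ first, writing $f-G_\s f=(f-f_{\g/\s,r})+(f_{\g/\s,r}-G_\s f_{\g/\s,r})+G_\s(f_{\g/\s,r}-f)$, and only then brings in the band-limited $(g_\s)_{\g/\s,r}$ to estimate the middle term via~\eqref{c2'Y}. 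The practical difference is that the paper's route stays entirely within the assumptions~\eqref{c1}, \eqref{c2'Y} together with the disjoint-balls estimate~\eqref{th1.5'} and the $L_p$-contraction of averaging; your route needs in addition a one-sided Plancherel--Polya inequality $\|\vp-\vp_{\g/\s,r}\|_{\ell_p(X_\s)}\lesssim\|\vp-\vp_{\g/\s,r}\|_p$ for band-limited functions on the separated set $X_\s$, which is a standard fact but is not among the listed hypotheses (and in the paper is stated only for the regular grid, $1<p<\infty$). Your approach has the advantage of applying~\eqref{c2'Y} to $\vp$ rather than to $(g_\s)_{\g/\s,r}$, so you avoid having to check that $|(g_\s)_{\g/\s,r}|_{Y_p^s}\lesssim |g_\s|_{Y_p^s}$; the paper's approach has the advantage of being fully self-contained within~\eqref{c1} and~\eqref{c2'Y}.
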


\begin{theorem}\label{th3Y}
  Let $f\in L_p(\R^d)\cap \ell_p(X_\s)$, $1\le p<\infty$, and $r,n\in \N$. Suppose that $G_\s$ satisfies conditions~\eqref{c1}, \eqref{c1'}, and~\eqref{c2'Y} with $0<s\le 2r$, and $G_\s f\in \Phi_\s$. Suppose also that for elements of $\Phi_\s$, the following Bernstein inequality inequality is valid:
  \begin{equation}\label{bsY}
  |g|_{Y_p^s}\le c \s^s\|g\|_p,\quad g\in\Phi_\s,
\end{equation}
for some constant $c$ independent of $g$ and $\s$.
   Then
  \begin{equation*}
  \begin{split}
         \|f_{\g/\s,\,r}-f\|_{\ell_p(X_\s)}&+K(f,\s^{-s},L_p, Y_p^s)\\
         &\le C\bigg(\|f-G_\s f\|_p+\frac1{\s^s}\sum_{\nu=0}^{[\s]}(\nu+1)^{s-1} \|f-G_\nu f\|_p\bigg).
  \end{split}
  \end{equation*}
  where the constant $C$ is independent of $f$ and $\s$.
\end{theorem}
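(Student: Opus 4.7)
The plan is to mirror the proof of Theorem~\ref{th3}, with the classical modulus $\w_s(f,\s^{-1})_p$ systematically replaced by the $K$-functional $K(f,\s^{-s},L_p, Y_p^s)$. The target estimate splits into two pieces on the left-hand side, and I would treat them separately. For the averaging term, I would directly apply the lower bound~\eqref{th1.2'Y} from Theorem~\ref{th1+Y}, which is already available under the present hypotheses, and rearrange to obtain
$$
\|f_{\g/\s,r}-f\|_{\ell_p(X_\s)} \le \kappa_2^{-1}\|f-G_\s f\|_p + C\,K(f,\s^{-s},L_p,Y_p^s).
$$
It then suffices to prove the Bernstein-type inverse estimate for the $K$-functional,
$$
K(f,\s^{-s},L_p,Y_p^s)\le \frac{C}{\s^s}\sum_{\nu=0}^{[\s]}(\nu+1)^{s-1}E(f,\Phi_\nu)_p,
$$
and then to insert the trivial bound $E(f,\Phi_\nu)_p \le \|f-G_\nu f\|_p$, which is legitimate because $G_\nu f\in \Phi_\nu$ by hypothesis.

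To prove the inverse estimate, I would follow the dyadic telescoping scheme behind part~$2)$ of Lemma~\ref{leJB}, with the realization~\eqref{prY3} playing the role that Lemma~\ref{leKM} plays in the classical argument. Fix $N$ with $2^N\asymp \s$, pick near-best approximants $\vp_{2^k}\in\Phi_{2^k}$ with $\|f-\vp_{2^k}\|_p\le 2\,E(f,\Phi_{2^k})_p$, and form the telescoping sum $\vp_{2^N}=\vp_1+\sum_{k=0}^{N-1}(\vp_{2^{k+1}}-\vp_{2^k})$. Each increment lies in $\Phi_{c\cdot 2^{k+1}}$ by the property $\Phi_\s+\Phi_\s\subset\Phi_{c\s}$, so the Bernstein inequality~\eqref{bsY} yields $|\vp_{2^{k+1}}-\vp_{2^k}|_{Y_p^s}\lesssim 2^{ks}\,E(f,\Phi_{2^k})_p$. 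Summing the seminorms and inserting the result into the realization~\eqref{prY3} of the $K$-functional at scale $2^N$ gives the dyadic inverse inequality, which a routine summation-by-parts converts into the full sum over $\nu\in\{0,\dots,[\s]\}$.

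The main obstacle I expect lies in this inverse step: in the classical case one moves freely between $\w_s$ and the $K$-functional via Lemma~\ref{leKM}, whereas here the Bernstein bound~\eqref{bsY} on $\Phi_\s$ and the realization~\eqref{prY3} must be combined directly, which forces a careful bookkeeping of the constants accumulated from iterating $\Phi_\s+\Phi_\s\subset\Phi_{c\s}$ and a verification that~\eqref{prY3} is applicable to the specific telescoped element produced by the argument. Once these technicalities are handled, the assembly of~\eqref{th1.2'Y} and the dyadic inverse bound reproduces the right-hand side of the theorem and the proof is complete in full parallel with Theorem~\ref{th3}.
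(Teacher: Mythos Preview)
Your proposal is correct and follows essentially the same route as the paper: apply~\eqref{th1.2'Y}, then bound $K(f,\s^{-s},L_p,Y_p^s)$ by the inverse inequality that the Bernstein hypothesis~\eqref{bsY} yields (the paper simply cites \cite[\S7, Theorem~5.1(ii)]{DL} for this, while you sketch its standard dyadic-telescoping proof), and finish with $E(f,\Phi_\nu)_p\le\|f-G_\nu f\|_p$. One small simplification: you do not need the realization~\eqref{prY3} in the inverse step---the bare definition $K(f,\s^{-s},L_p,Y_p^s)\le\|f-\vp_{2^N}\|_p+\s^{-s}|\vp_{2^N}|_{Y_p^s}$ already suffices once you have telescoped $|\vp_{2^N}|_{Y_p^s}$.
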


\section{Proofs of the main results}

\begin{proof}[Proof of Theorem~\ref{th1+}]
Denote $\d={\g}{\s^{-1}}$.
We have 
\begin{equation}\label{th1.3'}
\begin{split}
    \|f-G_\s f\|_p&\le \|f-f_{\d,r}\|_p+\|f_{\d,r}-G_\s(f_{\d,r})\|_p+\|G_\s(f_{\d,r}-f)\|_p.\\
\end{split}
\end{equation}
By~\eqref{st1} and property $(c)$, we get
\begin{equation}\label{th1.3'X}
\begin{split}
    \|f-f_{\d,r}\|_p\le C\w_{2r}(f,\d)_p\le C\w_{s}(f,\s^{-1})_p.
\end{split}
\end{equation}
Next, assumption~\eqref{c1} yields
\begin{equation}\label{th1.3'XX}
\begin{split}
    \|G_\s(f_{\d,r}-f)\|_p\le \kappa_1\|f-f_{\d,r}\|_{\ell_p(X_\s)}.
\end{split}
\end{equation}
To estimate $\|f_{\d,r}-G_\s(f_{\d,r})\|_p$, we take $g_\s\in \mathcal{B}_p^{\l\s}(\R^d)$ such that $\|f-g_\s\|_p\le CE_{\l\s}(f)_p$. Using Minkowski's inequality and~\eqref{c1} and~\eqref{c2'}, we obtain
\begin{equation}\label{th1.10'}
\begin{split}
&\|f_{\d,r}-G_\s(f_{\d,r})\|_p\\
&\le \|f_{\d,r}-(g_\s)_{\d,r}\|_p+\|(g_\s)_{\d,r}-G_\s((g_\s)_{\d,r})\|_p+\|G_\s(f_{\d,r}-(g_\s)_{\d,r})\|_p\\
&\le C\|f-g_\s\|_p+C\kappa_3\s^{-s}|g_\s|_{W_p^s}+\kappa_1\|f_{\d,r}-(g_\s)_{\d,r}\|_{\ell_p(X_\s)}.
\end{split}
\end{equation}
Now, taking into account that $B_\d(\xi)\cap B_\d(\xi')=\varnothing$ for $\xi,\xi' \in X_\s$, $\xi\neq \xi'$, and using H\"older's inequality,  we derive
\begin{equation}\label{th1.5'}
  \begin{split}
      \|f_{\d,r}-(g_\s)_{\d,r}\|_{\ell_p(X_\s)}^p
&\le \frac1{\s^d}\sum_{\xi\in X_\s}\(\sum_{j=1}^{r}\frac2{{|B_{\frac{\d j}{r}}|}}\int_{B_{\frac{\d j}{r}}(\xi)}|f(y)-g_\s(y)|dy\)^p\\
&\le \frac1{\s^d}\sum_{\xi\in X_\s}\(\sum_{j=1}^{r}\frac2{{|B_{\frac{\d j}{r}}|^{1/p}}}\(\int_{B_{\frac{\d j}{r}}(\xi)}|f(y)-g_\s(y)|^pdy\)^{1/p}\)^p\\
&\le\frac{C}{(\s\d)^d}
\sum_{\xi\in X_\s}\int_{B_{{\d}}(\xi)}|f(y)-g_\s(y)|^pdy\\
&\le \frac{C}{\g^d}\|f-g_\s\|_p^p.
  \end{split}
\end{equation}
Inequalities~\eqref{th1.10'} and \eqref{th1.5'} together with~\eqref{eqvR} give
\begin{equation}\label{th1.3'XX+}
\begin{split}
   \|f_{\d,r}-G_\s(f_{\d,r})\|_p &\le {C}{\g^{-d/p}}(\|f-g_\s\|_p+|g_\s|_{W_p^s})\\
   &\le {C}{\g^{-d/p}}\w_s(f,\s^{-1})_p.
\end{split}
\end{equation}
Thus, combining~\eqref{th1.3'}, \eqref{th1.3'X},  \eqref{th1.3'XX}, and~\eqref{th1.3'XX+}, we obtain~\eqref{th1.1'}.

\smallskip

Now we prove~\eqref{th1.2'}. Assumption~\eqref{c1'}, inequalities~\eqref{th1.3'X} and~\eqref{th1.3'XX+} yield
\begin{equation*}
\begin{split}
    \kappa_2\|f_{\d,r}-f\|_{\ell_p(X_\s)}&\le \|G_\s(f_{\d,r}-f)\|_p\\
  &\le \|f-f_{\d,r}\|_p+\|f_{\d,r}-G_\s(f_{\d,r})\|_p+\|f-G_\s f\|_p\\
  &\le {C}{\g^{-d/p}}\w_s(f,\s^{-1})_p+\|f-G_\s f\|_p,
\end{split}
\end{equation*}
which implies~\eqref{th1.2'}.
\end{proof}

%
%
%

\begin{proof}[Proof of Theorem~\ref{th3}]
From relation~\eqref{th1.2'}, we derive
\begin{equation}\label{th3.2}
\begin{split}
    \|f_{\g/\s,\,r}-f\|_{\ell_p(X_\s)}&+\w_s(f,\s^{-1})_p\\
  &\le \frac1{\kappa_2}\(\|f-G_\s f\|_p+(\kappa_2+C_2)\w_s(f,\s^{-1})_p\).
\end{split}
\end{equation}
This inequality together with inverse estimate~\eqref{leJB.2} and the fact that
\begin{equation}\label{EG}
  E(f,\Phi_\s)_p\le \|f-G_\s f\|_p
\end{equation}
gives~\eqref{th3.1}.
\end{proof}

\begin{proof}[Proof of Theorem~\ref{th1RG}]
In view of inequality~\eqref{th1.1'}, it is enough to establish that
$$
\|f_{\g/\s,\,r}-f\|_{\ell_p(X_\s)}+\w_s(f,\s^{-1})_p\le C\|f-G_\s f\|_p.
$$
By~\eqref{r1} and Lemma~\ref{lemR}, we have
\begin{equation*}
  \w_s(f,\s^{-1})_p\le CE(f,\Phi_\s)_p,
\end{equation*}
 which together with~\eqref{th3.2} and~\eqref{EG} proves the theorem.
\end{proof}

\begin{proof}[Proof of Theorem~\ref{thKw}]
  Let $g_\s\in \mathcal{B}_p^\s(\R^d)$ be such that $\|f-g_\s\|_p\le CE_\s(f)_p$. By the definition of $\mathcal{K}_s(f,X_\s)_p$, we have
\begin{equation}\label{Keq1}
\begin{split}
    \mathcal{K}_s(f,X_\s)_p&\le \|f-g_\s\|_{\ell_p(X_\s)}+\|f-g_\s\|_p+\s^{-s}|g_\s|_{W_p^s}.
\end{split}
\end{equation}
Next, for $\d=\g/\s$, we have
\begin{equation}\label{th1.4'}
  \begin{split}
      \|f-g_\s\|_{\ell_p(X_\s)}\le \|f-f_{\d,r}\|_{\ell_p(X_\s)}+\|f_{\d,r}-(g_\s)_{\d,r}\|_{\ell_p(X_\s)}+\|(g_\s)_{\d,r}-g_\s\|_{\ell_p(X_\s)}.
  \end{split}
\end{equation}
By~\eqref{EQ0}, properties of $\tau$-moduli of smoothness, and the Bernstein inequality~\eqref{Ber}, we get
\begin{equation}\label{th1.4'XX}
\begin{split}
    \|(g_\s)_{\d,r}-g_\s\|_{\ell_p(X_\s)}&\le C\tau_{2r}(g_\s,\s^{-1})_p\le C\tau_{s}(g_\s,\s^{-1})_p\\
    &\le C\(\s^{-s}|g_\s|_{W_p^s}+\sum_{s<|\b|_1\le d,\,\b\in \{0,1\}^d} \s^{-|\b|_1} \|D^\b g_\s\|_p\)\\
    &\le C\s^{-s}|g_\s|_{W_p^s}.
\end{split}
\end{equation}
Thus, combining~\eqref{Keq1}, \eqref{th1.4'}, and~\eqref{th1.4'XX}, and using~\eqref{th1.5'} and equivalence~\eqref{eqvR}, we obtain
\begin{equation*}
\begin{split}
    \mathcal{K}_s(f,X_\s)_p&\le \|f-f_{\d,r}\|_{\ell_p(X_\s)}+C(\|f-g_\s\|_p+\s^{-s}|g_\s|_{W_p^s})\\
&\le \|f_{\d,r}-f\|_{\ell_p(X_\s)}+C\w_s\(f,\s^{-1}\)_p,
\end{split}
\end{equation*}
which implies the upper estimate in~\eqref{thKw.1}.

\smallskip

Now prove the lower estimate. For any $g\in W_p^s(\R^d)$, we have
\begin{equation}\label{thKw.3}
  \begin{split}
     \|f_{\d,r}-f\|_{\ell_p(X_\s)}\le \|f_{\d,r}-g_{\d,r}\|_{\ell_p(X_\s)}+\|g_{\d,r}-g\|_{\ell_p(X_\s)}+\|g-f\|_{\ell_p(X_\s)}.
  \end{split}
\end{equation}
By the same arguments as in~\eqref{th1.5'}, we get
\begin{equation}\label{thKw.4}
  \begin{split}
      \|f_{\d,r}-g_{\d,r}\|_{\ell_p(X_\s)}\le {C}{\g^{-d/p}}\|f-g\|_p.
  \end{split}
\end{equation}
Further, applying~\eqref{EQ0}, \eqref{taum}, and properties $(c)$ and $(e)$, we have
\begin{equation}\label{thKw.4+}
  \begin{split}
      \|g_{\d,r}-g\|_{\ell_p(X_\s)}&\le C \tau_{2r}(g,\d)_p\le C\d^{d/p}\int_0^\d \frac{\omega_{2r}(g,t)_p}{t^{d/p}}\frac{dt}{t}\\
      &\le C\d^{d/p}\int_0^\d \frac{\omega_{s}(g,t)_p}{t^{d/p}}\frac{dt}{t}\le C\d^s |g|_{W_p^s},
  \end{split}
\end{equation}
which together with~\eqref{thKw.3} and~\eqref{thKw.4} implies
\begin{equation*}
  \begin{split}
    \|f_{\d,r}-f\|_{\ell_p(X_\s)}\le \|g-f\|_{\ell_p(X_\s)}+C{\g^{-d/p}}\|f-g\|_p+C\d^s |g|_{W_p^s}.
  \end{split}
\end{equation*}
Combining this estimate with~\eqref{eqvK}, we get
$$
\|f_{\d,r}-f\|_{\ell_p(X_\s)}+\w_s(f,\s^{-1})_p\le C\(\|g-f\|_{\ell_p(X_\s)}+\|f-g\|_p+\s^{-s}|g|_{W_p^s}\).
$$
It remains to take the infimum over all $g\in W_p^s(\R^d)$, completing the proof.
\end{proof}

\begin{remark}\label{remKK}
  In the statement of Theorem~\ref{thKw}, we can replace the assumption $s>d/p$ by the following one: for given $s\in \N$, there exists an operator $G_\s$ satisfying~\eqref{c1}, \eqref{c1'}, and  \eqref{c2'}.
Indeed, the only place in the proof of Theorem~\ref{thKw}, where we used $s>d/p$ is estimate~\eqref{thKw.4+}. An alternative way to obtain this inequality is based on the application of~Minkowskii's inequality, \eqref{st1}, and properties $(c)$ and $(e)$, which yield the following estimates:
\begin{equation*}
  \begin{split}
    \kappa_2\|g_{\d,r}-g\|_{\ell_p(X_\s)}&\le \|G_\s(g_{\d,r}-g)\|_p\\
    &\le \|G_\s(g_{\d,r}-g)-(g_{\d,r}-g)\|_p+\|g_{\d,r}-g\|_p\\
    &\le C\s^{-s}\|g_{\d,r}-g\|_{W_p^s}+C\w_{2r}(g,\d)_p\\
    &\le 2C\s^{-s}|g|_{W_p^s}+C\d^s|g|_{W_p^s}\\
    &=C\d^s |g|_{W_p^s}.
  \end{split}
\end{equation*}
\end{remark}

\begin{proof}[Proof of Theorem~\ref{th4}]
By~\eqref{eqvK}, we have
  \begin{equation*}
  \begin{split}
\w_s(f,\s^{-1})_p\le C\(\|f-G_{\s}f\|_p+\s^{-s}|G_{\s}f|_{W_p^s}\),
  \end{split}
  \end{equation*}
which together with~\eqref{th3.2} implies the estimate  from above in~\eqref{th4.1}.

Let us prove the estimate from below. By Lemma~\ref{lns} and properties $(b)$, $(c)$, we have
  \begin{equation}\label{th4.4}
    \begin{split}
      \s^{-s}|G_\s f|_{W_p^s}&\le C\w_s\(G_\s f,\s^{-1}\)_p\\
      &\le C\(2^s\|f-G_\s f\|_p+\w_s(f,\s^{-1})_p\).
    \end{split}
  \end{equation}
  Now, applying~\eqref{th1.1'} with $\d=\g/\s$ and~\eqref{th4.4}, we obtain
    \begin{equation*}
    \begin{split}
       \|f-G_\s f\|_p&+\s^{-s}|G_\s f|_{W_p^s}\\
       &\le (1+2^sC)\|f-G_\s f\|_p+C\w_s\(f,\s^{-1}\)_p\\
       &\le C\(\|f_{\g/\s,\,r}-f\|_{\ell_p(X_\s)}+\w_s\(f,\s^{-1}\)_p\),
    \end{split}
  \end{equation*}
  completing the proof.
\end{proof}

\begin{proof}[Proof of Theorem~\ref{th5}]
The theorem can be proved repeating step by step the proof of Theorem~4.6 in~\cite{KL23}, see also~\cite[Lemma~8]{HL} and~\cite{KT20}.
However, for the sake of completeness, we give the complete proof.

Applying~\eqref{th3.2} and~\eqref{eqvK}, we obtain
\begin{equation}\label{th5.4}
\begin{split}
    \|f_{\g/\s,\,r}-&f\|_{\ell_p(X_\s)}+\w_s(f,\s^{-1})_p\le C\(\|f-G_{\s}f\|_p+\s^{-s}|G_{\s}f|_{W_p^s}\).
\end{split}
\end{equation}
Next, by assumption~\eqref{c2'}, we have
\begin{equation}\label{th5.5}
I_\nu:=\|G_{2\nu}f-G_\nu(G_{2\nu}f)\|_p\le \kappa_3\nu^{-s}|G_{2\nu}f|_{W_p^s}.
\end{equation}
At the same time, taking into account that $X_\nu\subset X_{2\nu}$ and $G_\nu f(\xi)=f(\xi)$, $\xi\in X_\nu$, we get
\begin{equation*}
  \begin{split}
     I_\nu=\|G_{2\nu}f-G_\nu f\|_p\ge \|f-G_{\nu}f\|_p-\|f-G_{2\nu}f\|_p.
  \end{split}
\end{equation*}
Hence, in view of the convergence of $G_\nu f$ as $\nu\to\infty$ and~\eqref{th5.5}, we obtain
\begin{equation}\label{th5.6}
  \begin{split}
     \|f-G_{\s}f\|_p&=\sum_{k=0}^\infty \(\|f-G_{\s2^k}f\|_p-\|f-G_{\s2^{k+1}}f\|_p\)\\
     &\le \sum_{k=0}^\infty I_{\s2^k}\le C\sum_{k=1}^\infty (\s2^k)^{-s}|G_{\s2^k}f|_{W_p^s}.
  \end{split}
\end{equation}
Finally, combining~\eqref{th5.4} and~\eqref{th5.6}, we arrive at~\eqref{th5.1}.
\end{proof}

\begin{proof}[Proof of Theorem~\ref{th1+Y}]
  Repeat the proof of Theorem~\ref{th1+}, using relation~\eqref{prY3}.
\end{proof}

\begin{proof}[Proof of Theorem~\ref{th3Y}]
 Similarly as in the proof of Theorem~\ref{th3}, relation~\eqref{th1.2'Y} yields
\begin{equation}\label{th3.2Y}
\begin{split}
    \|f_{\g/\s,\,r}-f\|_{\ell_p(X_\s)}&+K(f,\s^{-s},L_p, Y_p^s)\\
  &\le \frac1{\kappa_2}\(\|f-G_\s f\|_p+(\kappa_2+C_2)K(f,\s^{-s},L_p, Y_p^s)\).
\end{split}
\end{equation}
By~\cite[\S~7, Theorem~5.1 (ii)]{DL}, we have that Bernstein inequality~\eqref{bsY} implies
      \begin{equation}\label{leJB.2Y}
         K(f,\s^{-s},L_p, Y_p^s)\le \frac{C}{\s^s}\sum_{\nu=0}^{[\s]}(\nu+1)^{s-1}E(f,\Phi_\nu)_p, \quad f\in L_p(\R^d).
      \end{equation}
Now, combining~\eqref{th3.2Y} and~\eqref{leJB.2Y} and using the inequality $E(f,\Phi_\s)_p\le \|f-G_\s f\|_p$, we prove the theorem.
\end{proof}

\section{Examples}

In this section, we demonstrate how the above results
can be applied to certain generalized sampling operators
\begin{equation}\label{SO}
S_\s^\vp f(x)=\sum_{k\in\Z^d}f\(\s^{-1}k\)\vp(\s x-k),\quad \s>0,
\end{equation}
where $\vp$ is a particular function. Below, we provide conditions on the function  $\vp$ that ensure the validity of assumptions~\eqref{c1}, \eqref{c1'}, and~\eqref{c2'} for $S_\s^\vp$.

\begin{proposition}\label{propA1A2} {\sc (See~\cite{JM91}.)}
 Let $\vp\in L_1(\R^d)$ and $1\le p<\infty$ be such that
\begin{equation}\label{jia1}
  \sum_{k\in \Z^d} |\vp(x+k)| \in L_p(0,1)^d.
\end{equation}
Then, for all $f\in \ell_p(X_\s)$, we have
 \begin{equation*}
   \|S_\s^\vp f\|_p\le C_1\|f\|_{\ell_p(X_\s)}.
 \end{equation*}
If, additionally, 
\begin{equation}\label{jia2}
  \sup_{k\in \Z^d} |\widehat{\vp}(\xi+k)|>0\quad\text{for all}\quad \xi\in \R^d,
\end{equation}
then
 \begin{equation*}
   C_2\|f\|_{\ell_p(X_\s)}\le \|S_\s^\vp f\|_p.
 \end{equation*}
Here, $C_1$ and $C_2$ are some positive constants independent of $f$ and $\s$.
\end{proposition}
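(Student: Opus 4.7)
The plan is to reduce matters to the scale-free case $\s=1$ by the change of variable $y=\s x$, which recasts both inequalities as
$$
\Bigl\|\sum_{k\in\Z^d} a_k \vp(\cdot-k)\Bigr\|_p \asymp \|a\|_{\ell_p(\Z^d)},
$$
since $a_k:=f(\s^{-1}k)$ satisfies $\s^{-d/p}\|a\|_{\ell_p(\Z^d)}=\|f\|_{\ell_p(X_\s)}$ when $X_\s=\s^{-1}\Z^d$, and the change of variable produces exactly the compensating $\s^{-d/p}$ factor.

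For the upper estimate, I would use a cube decomposition combined with a Hölder trick. Let $\Phi(x):=\sum_{k\in\Z^d}|\vp(x-k)|$, which by hypothesis \eqref{jia1} lies in $L_p([0,1]^d)$ (and is $\Z^d$-periodic). Splitting $\int_{\R^d}$ into a sum over the translates $[0,1]^d+n$ and applying Hölder's inequality pointwise in the form $|a_k||\vp(x-k)|=|a_k||\vp(x-k)|^{1/p}|\vp(x-k)|^{1/p'}$ gives
$$
\Bigl|\sum_k a_k\vp(x-k)\Bigr|^p \le \Phi(x)^{p-1}\sum_k|a_k|^p|\vp(x-k)|.
$$
Integrating over $[0,1]^d$, summing over the translates $n\in\Z^d$, interchanging sums, and using that $\sum_{n}|\vp(x+n-k)|=\Phi(x)$ (after relabelling) and $\sum_k|a_k|^p=\|a\|_{\ell_p}^p$ yields $\|\sum_k a_k\vp(\cdot-k)\|_p\le\|\Phi\|_{L_p([0,1]^d)}\|a\|_{\ell_p}$.

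For the lower estimate---the main difficulty---the strategy is to build a \emph{dual} function $\ww\vp$ and recover the coefficients by integration against its translates. Condition \eqref{jia2} guarantees that the continuous symbol $h(\xi):=\sup_k|\widehat\vp(\xi+k)|$ is strictly positive on the compact torus $\T^d$; by compactness one picks out finitely many shifts whose Fourier transforms have no common zero on $\T^d$. A Wiener-type lemma on $\T^d$ then lets one invert the periodic mass matrix associated to $\widehat\vp$ with a Fourier series whose coefficients lie in $\ell_1(\Z^d)$. This produces $\ww\vp$ in a Wiener amalgam ensuring $\sum_l|\ww\vp(x-l)|\in L_{p'}([0,1]^d)$ and satisfying the biorthogonality relation $\int_{\R^d}\vp(x-k)\ww\vp(x-l)\,dx=\d_{k,l}$. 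Writing $a_k=\int_{\R^d}\bigl(\sum_j a_j\vp(x-j)\bigr)\ww\vp(x-k)\,dx$ and testing against an arbitrary sequence $b\in\ell_{p'}(\Z^d)$ with $\|b\|_{\ell_{p'}}\le 1$, one applies the upper bound already established (with $\vp$ replaced by $\ww\vp$ and exponent $p'$) together with the $L_p$--$L_{p'}$ duality pairing to conclude $\|a\|_{\ell_p}\lesssim\|\sum_j a_j\vp(\cdot-j)\|_p$.

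The main obstacle is constructing $\ww\vp$ with sufficient decay to fall into the right amalgam space; this requires the Wiener-type inversion argument on the torus, where continuity of $\widehat\vp$ (coming from $\vp\in L_1$) combined with the strict positivity from \eqref{jia2} supplies the uniform lower bound needed to invert in a Banach algebra with $\ell_1$ Fourier coefficients. Once $\ww\vp$ is in hand, the rest of the argument is essentially a symmetric application of the upper bound together with duality.
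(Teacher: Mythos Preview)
The paper does not prove this proposition; it simply records the statement with a reference to Jia--Micchelli~\cite{JM91}. So there is nothing to compare against beyond checking that your outline matches the standard argument, which it largely does.

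Your upper-bound argument is correct and is exactly the usual proof: the H\"older split $|a_k||\vp(x-k)|^{1/p}\cdot|\vp(x-k)|^{1/p'}$, followed by periodicity of $\Phi$ and Fubini, yields precisely $\|\Phi\|_{L_p([0,1]^d)}\,\|a\|_{\ell_p}$.

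For the lower bound your dual-function strategy is the right idea, but one step is glossed over. You write that ``continuity of $\widehat\vp$ \dots\ supplies the uniform lower bound needed to invert in a Banach algebra with $\ell_1$ Fourier coefficients.'' Continuity together with strict positivity does give a uniform lower bound on the torus, but Wiener's lemma also requires the symbol to \emph{belong} to the Wiener algebra $A(\T^d)$ in the first place, and mere continuity of $\widehat\vp$ does not guarantee this. Under the stated hypotheses the autocorrelation coefficients $c_n=\int_{\R^d}\vp(x)\overline{\vp(x-n)}\,dx$ satisfy $\sum_n|c_n|\le\|\Phi\|_{L_2([0,1]^d)}^2$, so the bracket product $[\widehat\vp,\widehat\vp]$ lies in $A(\T^d)$ and your inversion goes through whenever $\Phi\in L_2$, in particular for $p\ge 2$. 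For $1\le p<2$, condition~\eqref{jia1} does not place $\Phi$ in $L_2$, and one has to argue differently---e.g.\ via a smooth partition of unity on $\T^d$ subordinate to the finite cover $\{\,|\widehat\vp(\cdot+k_j)|>0\,\}$ and a direct multiplier/amalgam bound, which is closer to the actual argument in~\cite{JM91} (formulated there for compactly supported $\vp$). The plan is sound; just be aware that the inversion step needs more input than continuity of $\widehat\vp$.
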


Proposition~\ref{propA1A2} implies that if $\vp$ is such that~\eqref{jia1} and~\eqref{jia2} hold, then the operator $S_\s^\vp$ satisfies  assumptions~\eqref{c1} and~\eqref{c1'}. Concerning~\eqref{c2'} and~\eqref{c2'Y}, there are various approaches to ensure the required inequalities, see, e.g., \cite{KS20}, \cite{KS21}, \cite{KS21a}, \cite{SS00}. Below, we recall some standard results in this direction when
$\supp \widehat{\vp}$ is compact.

\begin{proposition}\label{propA3} 
Let $\vp\in \mathcal{B}_p^{a\s}(\R^d)$ and $g\in\mathcal{B}_q^{(1-a)\s}(\R^d)$ for some $a\in (0,1)$ and $1/p+1/q=1$, $1\le p<\infty$. Then
\begin{equation}\label{conv1}
  \s^{-d}\sum_{k\in\Z^d}g(\s^{-1}k)\vp(x-\s^{-1}k)=(g*\vp)(x),\quad x\in \R^d.
\end{equation}
Further, if $\vp$ and parameter $s>0$ are such that
\begin{equation}\label{conv4}
\mathcal{F}^{-1}\(\frac{\eta(1-\widehat{\vp})}{|\cdot|^s}\)\in L_1(\R^d)
\end{equation}
for some function $\eta\in C_0^\infty(\R^d)$ equal to 1 on $(-\tfrac 12,\tfrac 12)^d$, then
\begin{equation}\label{conv3}
 \|g-S_\s^\vp g\|_p\le C\|(-\Delta)^{s/2}g\|_p,
\end{equation}
where $\Delta$ is the Laplacian and the constant $C$ is independent of $\s$ and $g$.
\end{proposition}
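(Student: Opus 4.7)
My approach to both parts is Fourier analytic.

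For the sampling identity~\eqref{conv1} I would compare Fourier transforms. The right-hand side is $\widehat\vp(\xi)\widehat g(\xi)$. For the left-hand side, expanding the Fourier transform of each translate and invoking the Poisson summation formula yields
\[
\widehat\vp(\xi)\cdot\s^{-d}\sum_{k\in\Z^d}g(\s^{-1}k)e^{-2\pi\i\s^{-1}k\cdot\xi} \;=\; \widehat\vp(\xi)\sum_{m\in\Z^d}\widehat g(\xi+m\s).
\]
The band-separation hypothesis $a+(1-a)=1$ then kills every $m\neq 0$ term: if $\widehat\vp(\xi)\widehat g(\xi+m\s)\neq 0$, then $|\xi|_\infty\le a\s$ and $|\xi+m\s|_\infty\le(1-a)\s$ together force $|m|_\infty<1$, impossible for nonzero $m$. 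Only $m=0$ survives, the two Fourier transforms coincide, and Fourier uniqueness---together with bounded continuity of both sides (the right by H\"older's inequality, the left by \eqref{jia1})---establishes~\eqref{conv1}. The only technical nuisance is justifying Poisson summation outside the Schwartz class, which is standard via distributional arguments or density.

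For the error bound~\eqref{conv3} I interpret the statement for band-limited $g\in\mathcal B_p^{\l\s}$ with $\l\le 1/2$, which is the setting relevant to~\eqref{c2'} and~\eqref{c2'Y}; dimensional considerations also suggest the implicit scaling $C=C'\s^{-s}$ on the right. Under this band-limit, the same Poisson-summation argument isolates the principal Fourier symbol
\[
\widehat{g-S_\s^\vp g}(\xi) \;=\; \widehat g(\xi)\bigl(1-\widehat\vp(\xi/\s)\bigr)\quad\text{on } \supp\widehat g\subset[-\s/2,\s/2]^d,
\]
with an alias contribution supported outside $[-\s/2,\s/2]^d$ that must be dealt with separately (see obstacle below). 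Since $\eta(\xi/\s)\equiv 1$ on $\supp\widehat g$, I may insert this cutoff freely and factor
\[
\widehat g(\xi)\eta(\xi/\s)\bigl(1-\widehat\vp(\xi/\s)\bigr) \;=\; (2\pi\s)^{-s}\,m(\xi/\s)\cdot(2\pi|\xi|)^s\widehat g(\xi),\qquad m(\zeta):=\frac{\eta(\zeta)(1-\widehat\vp(\zeta))}{|\zeta|^s}.
\]
The second factor is $(2\pi\s)^{-s}\widehat{(-\Delta)^{s/2}g}(\xi)$, while dilation invariance of the $L_1$-norm gives $\|\mathcal F^{-1}[m(\cdot/\s)]\|_1=\|\mathcal F^{-1}[m]\|_1<\infty$ by hypothesis~\eqref{conv4}. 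Young's convolution inequality then bounds the principal piece by $C\s^{-s}\|(-\Delta)^{s/2}g\|_p$.

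The main obstacle is the alias contribution $\widehat\vp(\xi/\s)\sum_{m\neq 0}\widehat g(\xi+m\s)$, since in Part~2 the kernel $\vp$ need not be band-limited. When $\vp$ is band-limited to a window compatible with the bandwidth of $g$, this term vanishes outright by the argument used in Part~1. For general $\vp$ satisfying only~\eqref{conv4}, I would split $\vp=\vp_{\text{low}}+\vp_{\text{high}}$ with $\widehat{\vp_{\text{low}}}=\eta(\cdot)\widehat\vp$: the low-frequency part is band-limited and produces no aliasing, so the multiplier-plus-Young argument above handles it; the high-frequency tail $\vp_{\text{high}}$ is controlled by the decay built into $\mathcal F^{-1}[\eta(1-\widehat\vp)/|\cdot|^s]\in L_1$, which was designed precisely for this purpose. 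Carefully reconciling the two pieces---so that the alias contribution and the multiplier bound combine into a single estimate of the correct order---is the main technical step.
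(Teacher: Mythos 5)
Your proposal is essentially the paper's own proof: Poisson summation gives the identity \eqref{conv1} (the alias terms dying by the band separation $a+(1-a)=1$), and the error bound is obtained by reducing to $\|g-g*\vp_\s\|_p$ and then factoring the multiplier $m=\eta(1-\widehat{\vp})/|\cdot|^s$ so that Young's inequality together with the dilation invariance of $\|\mathcal{F}^{-1}[m(\cdot/\s)]\|_{L_1}$ yields the estimate --- and you are right that the bound should carry the factor $\s^{-s}$, which is what the paper actually uses when it invokes the proposition to verify \eqref{c2'Y}. The ``main obstacle'' you raise is moot under the proposition's hypotheses: the band-limitation of $\vp$ from the first sentence remains in force in the second part, so the alias contribution vanishes exactly as in your first argument and no low/high frequency splitting is needed; this is precisely how the paper proceeds via $S_\s^\vp g=g*\vp_\s$.
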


\begin{proof}
Equality~\eqref{conv1} follows directly form the Poisson summation formula.
Further, by~\eqref{conv1}, we have
\begin{equation}\label{conv2}
 \|g-S_\s^\vp g\|_p=\|g-g*\vp_\s\|_p,
\end{equation}
where $\vp_\s(x)=\s^{d}\vp(\s x)$. At the same time, condition~\eqref{conv4} and Young's convolution inequality yield
\begin{equation}\label{conv5}
 \|g-g*\vp_\s\|_p\le C\|(-\Delta)^{s/2}g\|_p,
\end{equation}
where $\Delta$ is the Laplacian and $C$ is a constant independent of $\s$ and $g$. Finally, \eqref{conv2} and~\eqref{conv5} give~\eqref{conv3}.
\end{proof}



\subsection{Sampling operators with band-limited kernels} The classical Whittaker--Kotelnikov--Shannon sampling expansion is defined by
$$
S_\s^{\rm sinc} f(x)=\sum_{k\in\Z^d}f\(\s^{-1}k\){\rm sinc}(\s x-k),\quad \s>0,
$$
where
$$
{\rm sinc}(x)=\prod_{j=1}^d \frac{\sin \pi x_j}{\pi x_j}.
$$
One of the main tools for studying the properties of these operators is the Plancherel-Polya inequality (see, e.g.,~\cite[1.5.5]{ST86}). This inequality states that there exist positive constants $C_1$ and $C_2$ such that, for all $g\in \mathcal{B}_p^\s(\R^d)$, $1<p<\infty$, and $\s>0$,
\begin{equation}\label{mzS}
  C_1\s^{-d}\sum_{k\in \Z^d}|g(\s^{-1}k)|^p\le \|g\|_{p}^p\le C_2\s^{-d}\sum_{k\in \Z^d}|g(\s^{-1}k)|^p.
\end{equation}
Inequalities~\eqref{mzS} and the fact that
\begin{equation*}
  S_\s^{\rm sinc} f\(\s^{-1}k\)=f\(\s^{-1}k\),\quad k\in\Z^d,
\end{equation*}
 imply that $S_\s^{\rm sinc}$ satisfies assumptions~\eqref{c1} and~\eqref{c1'} for $X_\s=(\s^{-1}k)_{k\in \Z^d}$ as well as assumption~\eqref{c2'} for each $s\in \N$. Indeed, the fulfilment of~\eqref{c1} and~\eqref{c1'} is straightforward, while~\eqref{c2'} is ensured by the fact that $S_\s^{\rm sinc}(g)=g$ for each $g\in \mathcal{B}_p^{\s/2}(\R^d)$, see Proposition~\ref{propA3}. 
Thus, taking into account Lemma~\ref{leJB+}, we obtain that Theorems~\ref{th1+}--\ref{th1RG}, \ref{th4}, and~\ref{th5} are valid for $S_\s^{\rm sinc}$. In particular, Corollary~\ref{cor3} can be written as follows.

\begin{corollary}\label{cor3S}
  Let $f\in L_p(\R^d)\cap \ell_p(X_\s)$, $1<p<\infty$, $r,s\in \N$, $s\le 2r$, and $\a\in (0,s)$. Then the following properties are equivalent:
  \begin{enumerate}
    \item[$(i)$]  $\|f-S_\s^{\rm sinc} f\|_{p}=\mathcal{O}(\s^{-\a})$,
    \item[$(ii)$]  $\|f_{1/2\s,\,r}-f\|_{\ell_p(X_\s)}+\w_s(f,\s^{-1})_{p}=\mathcal{O}(\s^{-\a})$,
    \item[$(iii)$] $|S_\s^{\rm sinc} f|_{W_p^s}=\mathcal{O}(\s^{s-\a})$.
  \end{enumerate}
\end{corollary}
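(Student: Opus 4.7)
The plan is to deduce Corollary~\ref{cor3S} directly from the already-proved Corollary~\ref{cor3} applied with $G_\s=S_\s^{\rm sinc}$, $\Phi_\s=\mathcal{B}_p^\s(\R^d)$, and $X_\s=(\s^{-1}k)_{k\in\Z^d}$. For this sampling set the nodes are separated by exactly $\s^{-1}$, so the admissible separation constant is $\g=1/2$, which explains why the averaged operator in~(ii) is $f_{1/2\s,\,r}$. The task therefore reduces to verifying, one by one, the hypotheses of Theorem~\ref{th5} (the assumptions underlying Corollary~\ref{cor3}) for this choice.

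First I would verify assumptions~\eqref{c1} and~\eqref{c1'} simultaneously. Since $S_\s^{\rm sinc} f\in\mathcal{B}_p^\s(\R^d)$ and the interpolation identity $S_\s^{\rm sinc} f(\s^{-1}k)=f(\s^{-1}k)$ holds for every $k\in\Z^d$, the Plancherel--Polya inequality~\eqref{mzS} (valid for $1<p<\infty$) yields
$$
  \|S_\s^{\rm sinc} f\|_p \asymp \(\s^{-d}\sum_{k\in\Z^d}|S_\s^{\rm sinc} f(\s^{-1}k)|^p\)^{1/p}=\|f\|_{\ell_p(X_\s)}.
$$
Assumption~\eqref{c2'} then holds for \emph{every} $s\in\N$ with $\l=1/2$: by Proposition~\ref{propA3} (the reproducing property on band-limited functions), $S_\s^{\rm sinc} g=g$ for all $g\in\mathcal{B}_p^{\s/2}(\R^d)$, so the left-hand side of~\eqref{c2'} vanishes identically. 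In particular, $s\le 2r$ can be chosen freely as in the statement.

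Next I would check the remaining structural requirements of Theorem~\ref{th5}. The inclusion $S_\s^{\rm sinc} f\in\Phi_\s\cap W_p^s(\R^d)$ follows from $S_\s^{\rm sinc} f\in\mathcal{B}_p^\s(\R^d)$ together with the Bernstein bound of Lemma~\ref{leBer}; the interpolation condition $G_\s f(\xi)=f(\xi)$ on $X_\s$ is built into the definition of $S_\s^{\rm sinc}$; and the nesting $X_\s=\s^{-1}\Z^d\subset(2\s)^{-1}\Z^d=X_{2\s}$ is immediate. Finally, the Jackson and Bernstein inequalities~\eqref{js}, \eqref{bs} and the componentwise Bernstein inequality~\eqref{bebe2} for $\Phi_\s=\mathcal{B}_p^\s(\R^d)$ are supplied by Lemmas~\ref{leJB+} and~\ref{leBer}, which is precisely why Theorems~\ref{th1+}--\ref{th1RG}, \ref{th4}, and~\ref{th5} are applicable to $S_\s^{\rm sinc}$.

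The only subtlety I anticipate, and what I would call the main (mild) obstacle, is that Theorem~\ref{th5} and hence Corollary~\ref{cor3} formally presuppose the convergence property~\eqref{conv}. This is not a genuine obstruction, however: each of~(i), (ii), (iii) with $\a>0$ forces $\|f-S_\s^{\rm sinc} f\|_p\to 0$ as $\s\to\infty$ (directly in~(i); via Theorems~\ref{th1+} and~\ref{th4} in~(ii) and~(iii)), after which Corollary~\ref{th2} delivers~\eqref{conv}. With all hypotheses verified, Corollary~\ref{cor3} applies verbatim and yields the three-way equivalence of Corollary~\ref{cor3S}.
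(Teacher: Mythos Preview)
Your proposal is correct and follows essentially the same route as the paper: verify \eqref{c1}--\eqref{c2'} for $S_\s^{\rm sinc}$ via the Plancherel--Polya inequality~\eqref{mzS} and the reproducing property $S_\s^{\rm sinc}g=g$ on $\mathcal{B}_p^{\s/2}(\R^d)$, check the remaining structural hypotheses of Theorem~\ref{th5} (interpolation at the nodes, $X_\s\subset X_{2\s}$, Bernstein inequalities from Lemmas~\ref{leJB+} and~\ref{leBer}), and then read off Corollary~\ref{cor3S} as the specialization of Corollary~\ref{cor3}. Your explicit identification of $\g=1/2$ and your attention to the convergence hypothesis~\eqref{conv} go slightly beyond what the paper spells out, but the argument is the same.
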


As a next standard example, we consider the sampling operator generated by the Riesz kernel
\begin{equation*}
\vp(x)=\rho_{s,\d}(x)=\mathcal{F}^{-1}\((1-|\tfrac 43\xi|^s)_+^\d\)(x).
\end{equation*}
Using~\cite{RS10} (see also~\cite{K12}), we get that $\rho_{s,\d}$ with $\d>\frac{d-1}{2}$ and $s>0$ satisfies~\eqref{conv4}. Therefore, by Proposition~\ref{propA3}, we have that assumption~\eqref{c2'Y} with $|g|_{Y_p^s}=\|(-\Delta)^{s/2}g\|_p$ is fulfilled for  $S_\s^{\rho_{s,\d}}$. It is also not difficult to see that~\eqref{jia1} and~\eqref{jia2} hold for $\rho_{s,\d}$, which by Proposition~\ref{propA1A2} implies that assumptions~\eqref{c1} and~\eqref{c1'} are also fulfilled. Now, denoting
$$
K_s^\Delta(f,t)_p:=\inf_{g}\(\|f-g\|_p+t^s \|(-\Delta)^{s/2}g\|_p\),
$$
and applying Theorems~\ref{th1+Y} and~\ref{th3Y} as well as Lemma~\ref{leJB+}, we obtain the following result.


\begin{corollary}\label{cor3SR}
  Let $f\in L_p(\R^d)\cap \ell_p(X_\s)$, $1<p<\infty$, $r\in \N$, $0<s\le 2r$, $\d>\tfrac{d-1}2$, and $\a\in (0,s)$. Then the following properties are equivalent:
  \begin{enumerate}
    \item[$(i)$]  $\|f-S_\s^{\rho_{s,\d}} f\|_{p}=\mathcal{O}(\s^{-\a})$,
    \item[$(ii)$]  $\|f_{1/2\s,\,r}-f\|_{\ell_p(X_\s)}+K_s^\Delta(f,\s^{-1})_p=\mathcal{O}(\s^{-\a})$.
  \end{enumerate}
\end{corollary}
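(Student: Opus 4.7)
The plan is to view Corollary~\ref{cor3SR} as the direct specialization of the generalized framework from Theorems~\ref{th1+Y} and~\ref{th3Y}, so almost all of the work is to check that the hypotheses of those theorems are satisfied by $G_\s := S_\s^{\rho_{s,\d}}$, with the choice $|g|_{Y_p^s} := \|(-\Delta)^{s/2}g\|_p$, $\Phi_\s := \mathcal{B}_p^\s(\R^d)$, $X_\s = (\s^{-1}k)_{k\in\Z^d}$, and $\g = 1/2$. With this identification, $K(f,\s^{-s},L_p,Y_p^s) = K_s^\Delta(f,\s^{-1})_p$, and $f_{\g/\s,r} = f_{1/(2\s),r}$, matching the statement.

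First I would collect the structural hypotheses. Assumptions~\eqref{c1} and~\eqref{c1'} follow from Proposition~\ref{propA1A2}, since the Riesz kernel $\rho_{s,\d}$ with $\d>(d-1)/2$ satisfies both the absolute summability condition~\eqref{jia1} (from its decay estimates) and the non-vanishing condition~\eqref{jia2} (since $\widehat{\rho_{s,\d}}$ has compact support in $[-\tfrac34,\tfrac34]^d$ and is continuous and positive on its support). Assumption~\eqref{c2'Y} follows from Proposition~\ref{propA3}, once we know that $\rho_{s,\d}$ satisfies condition~\eqref{conv4}; this is the content of the results cited from~\cite{RS10} and~\cite{K12}. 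The two abstract properties required of $Y_p^s$ (comparability with $|\cdot|_{W_p^{2r}}$ on band-limited functions and the Bernstein-type relation~\eqref{prY2}) are standard Fourier-multiplier facts for the fractional Laplacian semi-norm on $\mathcal{B}_p^\s(\R^d)$. The Bernstein inequality~\eqref{bsY}, namely $\|(-\Delta)^{s/2}g\|_p \le c\s^s\|g\|_p$ for $g\in \mathcal{B}_p^\s(\R^d)$, is classical (e.g.\ via Lemma~\ref{leBer} applied to a smooth Fourier multiplier supported on $[-\s,\s]^d$).

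With these verifications in place, the implication $(ii)\Rightarrow(i)$ is immediate from Theorem~\ref{th1+Y}, since the right-hand side of its conclusion equals $\kappa_1\|f_{1/(2\s),r}-f\|_{\ell_p(X_\s)} + C_1 K_s^\Delta(f,\s^{-1})_p = \mathcal{O}(\s^{-\a})$.

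For $(i)\Rightarrow(ii)$ I would invoke Theorem~\ref{th3Y}, which under~(i) yields
$$
\|f_{1/(2\s),r}-f\|_{\ell_p(X_\s)} + K_s^\Delta(f,\s^{-1})_p \;\le\; C\Bigl(\|f-S_\s^{\rho_{s,\d}}f\|_p + \frac{1}{\s^s}\sum_{\nu=0}^{[\s]}(\nu+1)^{s-1}\|f-S_\nu^{\rho_{s,\d}}f\|_p\Bigr).
$$
Inserting the pointwise bound $\|f-S_\nu^{\rho_{s,\d}}f\|_p \lesssim (\nu+1)^{-\a}$ from~$(i)$ and using $\a<s$, the summation becomes $\s^{-s}\sum_{\nu=0}^{[\s]}(\nu+1)^{s-1-\a} \lesssim \s^{-\a}$, which closes the loop.

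The main obstacle is not analytical but bookkeeping: carefully confirming that the Riesz kernel $\rho_{s,\d}$ meets the multiplier hypothesis~\eqref{conv4} for the range $\d>(d-1)/2$ (so that Proposition~\ref{propA3} applies and produces the fractional-Laplacian bound rather than an integer-order Sobolev bound), and that the two abstract structural assumptions imposed on $Y_p^s$ earlier in the paper do hold for $\|(-\Delta)^{s/2}\cdot\|_p$ on band-limited functions. Once these items are in hand, the equivalence $(i)\Leftrightarrow(ii)$ is simply the conjunction of Theorems~\ref{th1+Y} and~\ref{th3Y} combined with the elementary power-sum estimate above.
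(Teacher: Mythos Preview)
Your proposal is correct and follows essentially the same route as the paper: the text preceding the corollary verifies \eqref{c1}, \eqref{c1'} via Proposition~\ref{propA1A2}, verifies \eqref{c2'Y} with $|g|_{Y_p^s}=\|(-\Delta)^{s/2}g\|_p$ via Proposition~\ref{propA3} and the multiplier condition~\eqref{conv4}, and then invokes Theorems~\ref{th1+Y} and~\ref{th3Y} together with Lemma~\ref{leJB+} (for the Bernstein-type facts on $\mathcal{B}_p^\s$). Your explicit summation argument for $(i)\Rightarrow(ii)$ and your remarks on the structural hypotheses for $Y_p^s$ just spell out details that the paper leaves implicit.
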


\subsection{Sampling operators with time-limited kernels} Another typical example of sampling operators of the form~\eqref{SO} that satisfy assumptions~\eqref{c1}--\eqref{c2'} is given by operators generated by $B$-splines. Recall that the univariate $B$-spline of order $r\in \N$, $r\ge 2$, is defined by
$$
B_r(u)=\frac1{(r-1)!}\sum_{j=0}^r (-1)^j\binom{r}{j}\(\frac r2+u-j\)_+^{r-1},\quad u\in \R,
$$
where $(\cdot)_+$ denotes the positive part. In particular, one has $B_2(u)=(1-|u|)_+$.
We set
$$
S_{\s}^{\b_r} f(x)=\sum_{k\in \Z^d}f(\s^{-1}k)\b_r(\s x-k),\quad \s>0,
$$
where
$$
\b_r(x)=\prod_{j=1}^b B_r(x_i).
$$
Noting that $\widehat{\b_r}(\xi)=\prod_{j=1}^d\(\frac{\sin \pi\xi_j}{\pi\xi_j}\)^r$ and  ${\rm supp}\,\b_r\subset [-\frac{r}{2},\frac{r}{2}]^d$,
we obtain that relations~\eqref{jia1} and~\eqref{jia2} hold for $\b_r$. Therefore, by Proposition~\ref{propA1A2}, the operator $S_{\s}^{\b_r} $ satisfies assumptions~\eqref{c1} and~\eqref{c1'} with $X_\s=(\s^{-1}k)_{k\in \Z^d}$.
At the same time, by~\cite{BBSV06} (see also~\cite{B88}), we have the following error estimate
\begin{equation*}
  \|f-S_{\s}^{\b_r}  f\|_{p}\le C\tau_1(f,\s^{-1})_{p},\quad \s>0,
\end{equation*}
which together with property~$(e')$ and the Bernstein inequality~\eqref{Ber} gives
\begin{equation*}
  \|g-S_{\s}^{\b_r} g\|_{p}\le C\s^{-1}|g|_{W_p^1},\quad g\in \mathcal{B}_p^\s(\R^d),
\end{equation*}
where the constant $C$ is independent of $\s$ and $g$.
Hence, assumption~\eqref{c2'} also holds for $S_{\s}^{\b_r} $. Now, noting that 
 $\Phi_{\s}^{\b_r}={\rm span}\,\(\b_{r}(\s x-k)\)_{k\in \Z^d}$ with $\Phi_{0}^{\b_r}=\{0\}$ satisfies inequalities~\eqref{js}, \eqref{bs}, and~\eqref{bebe2}, see, e.g.,~\cite[Ch.~5]{DL}, we obtain the following special case of Corollary~\ref{cor3}.

\begin{corollary}\label{cor3Sr}
  Let $f\in L_p(\R^d)\cap \ell_p(X_\s)$, $1\le p<\infty$, $r\in \N$, and $\a\in (0,1)$. Then the following properties are equivalent:
  \begin{enumerate}
    \item[$(i)$]  $\|f-S_{\s}^{\b_r} f\|_{p}=\mathcal{O}(\s^{-\a})$,
    \item[$(ii)$]  $\|f_{1/2\s,\,r}-f\|_{\ell_p(X_\s)}+\w_1(f,\s^{-1})_{p}=\mathcal{O}(\s^{-\a})$.
  \end{enumerate}
\end{corollary}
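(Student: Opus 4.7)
The plan is to realize this as a direct application of Corollary~\ref{cor2}, in which most of the verification work has already been carried out in the paragraphs preceding the statement. I only need to assemble the pieces and check the parameter choice. Take $s=1$ in Corollary~\ref{cor2}, and use the same letter $r$ for the averaging order in the averaged operator $f_{\d,r}$. Since $r\ge 1$, the required inequality $s\le 2r$ is satisfied, and $\a\in(0,1)=(0,s)$ matches the hypothesis.

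First, I would verify assumptions~\eqref{c1} and~\eqref{c1'} for $G_\s:=S_\s^{\b_r}$ on the lattice $X_\s=(\s^{-1}k)_{k\in\Z^d}$. The kernel $\b_r$ has compact support, so the sum $\sum_{k\in\Z^d}|\b_r(\cdot+k)|$ is bounded (hence in $L_p((0,1)^d)$), giving~\eqref{jia1}; moreover $\widehat{\b_r}(\xi)=\prod_j(\sin\pi\xi_j/\pi\xi_j)^r$ is strictly positive at the origin, which yields~\eqref{jia2}. Proposition~\ref{propA1A2} then provides \eqref{c1} and \eqref{c1'}. Next, I would verify~\eqref{c2'} with $s=1$: starting from the $\tau$-modulus direct bound $\|f-S_\s^{\b_r} f\|_p\lesssim \tau_1(f,\s^{-1})_p$ from~\cite{BBSV06}, one applies $\tau$-modulus property~$(e')$ to $g\in\mathcal{B}_p^\s(\R^d)$ and then the Bernstein inequality~\eqref{Ber} to bound the auxiliary sum $\sum_{1<|\b|_1\le d} \s^{-|\b|_1}\|D^\b g\|_p$ by $C\s^{-1}|g|_{W_p^1}$, giving
\[
\|g-S_\s^{\b_r} g\|_p\le C\s^{-1}|g|_{W_p^1},\qquad g\in\mathcal{B}_p^\s(\R^d),
\]
which is exactly~\eqref{c2'} with $s=1$ and $\l=1$.

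It remains to note that the family $\Phi_\s^{\b_r}=\operatorname{span}(\b_r(\s\cdot-k))_{k\in\Z^d}$ fulfils the structural requirements on $(\Phi_\s)_{\s\ge 0}$ from Section~\ref{secAR}, and the Jackson~\eqref{js}, Bernstein~\eqref{bs}, and cross-derivative Bernstein~\eqref{bebe2} inequalities hold for $B$-spline spaces by standard results (e.g.~\cite[Ch.~5]{DL}). By construction, $S_\s^{\b_r} f\in \Phi_\s^{\b_r}$. Therefore the hypotheses of Theorem~\ref{th3}, and a fortiori of Corollary~\ref{cor2}, are satisfied with $s=1$ and $\g=1/2$ (the minimal separation of $X_\s$ is $\s^{-1}$, so any $\g\le 1/2$ is admissible and this fixes the averaging radius $\g/\s=1/(2\s)$ appearing in the statement). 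Corollary~\ref{cor2} then delivers the equivalence $(i)\Leftrightarrow(ii)$.

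The only non-bookkeeping step is the passage from the $\tau$-modulus direct estimate to the $W_p^1$-direct estimate~\eqref{c2'}, which I expect to be the main obstacle in presentation rather than in content: one must check that the extra cross-derivative terms contributed by property~$(e')$ are absorbed by the $W_p^1$-semi-norm via the Bernstein inequality for band-limited functions. Everything else reduces to citing Proposition~\ref{propA1A2}, standard $B$-spline approximation theory, and the general Corollary~\ref{cor2}.
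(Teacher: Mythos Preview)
Your proposal is correct and follows essentially the same route as the paper: verify \eqref{c1}, \eqref{c1'} via Proposition~\ref{propA1A2}, derive \eqref{c2'} with $s=1$ from the $\tau$-modulus bound plus property~$(e')$ and Bernstein, record the structural properties of the $B$-spline spaces, and then read off $(i)\Leftrightarrow(ii)$. The only difference is that the paper nominally invokes Corollary~\ref{cor3} while you invoke Corollary~\ref{cor2}; since the statement of Corollary~\ref{cor3Sr} uses only the $(i)\Leftrightarrow(ii)$ equivalence and the interpolation hypothesis $G_\s f(\xi)=f(\xi)$ of Theorem~\ref{th5} fails for $S_\s^{\b_r}$ when $r\ge 3$, your citation of Corollary~\ref{cor2} is in fact the more accurate one.
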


\subsection{Sampling operators generated by Gaussian}
Consider the operator
$$
S_\s^\psi f(x)=\sum_{k\in \Z^d} f(\s^{-1}k)\psi(\s x-k),\quad \s>0,
$$
where
$\psi(x)=e^{-\pi |x|^2}$. Applying~\cite[Theorem~16]{KS20}, we can verify that for each $g\in \mathcal{B}_p^{\l \s}(\R^d)$ with $1\le p<\infty$ and $\l \in (0,1/2)$,
\begin{equation}\label{ga1}
  \|g-S_\s^\psi g\|_p\le C\s^{-1}|g|_{W_p^1},
\end{equation}
 where the constant $C$ is independent of $\s$ and $g$. Therefore, $S_\s^\psi$ satisfies~\eqref{c2'} with $s=1$. Furthermore, by Proposition~\ref{propA1A2}, it also satisfies assumptions~\eqref{c1} and~\eqref{c1'}.

Now we denote $\Phi_{0,p}=\{0\}$ and, for $\s>0$,
 $$
 \Phi_{\s,p}=\bigg\{g\,:\,g(x)=\sum_{k\in \Z^d}c_k\psi(\s x-k),\quad (c_k)_{k\in\Z^d}\in \ell_p\bigg\}.
 $$
Inequalities~\eqref{ga1}, \eqref{leJB.1+}, and~\eqref{NS} imply that $E(f, \Phi_{\s,p})_p\le C\s^{-1}|f|_{W_p^1}$. We now show that, for all multi-indices $\a\in\Z_+^d$, the following Bernstein-type inequality holds:
\begin{equation}\label{berG}
  \| D^\a g\|_p\le C\s^{|\a|_1}\|g\|_p,\quad g \in  \Phi_{\s,p}^\psi,
\end{equation}
where the constant $C$ is independent of $\s$ and $g$. Indeed, it is clear that $D^\a \psi$ satisfies~\eqref{jia1} and $\psi$ satisfies~\eqref{jia2}. Thus, applying Proposition~\ref{propA1A2}, we get for all $g\in \Phi_{1,p}^\psi$
\begin{equation*}
  \| D^\a g\|_p^p\le C\sum_{k\in\Z^d}|c_k|^p \le C\|g\|_p^p.
\end{equation*}
It remains only to replace $g(x)$ with $g(\s x)$ to get~\eqref{berG}.

Now, applying Theorems~\ref{th1+} and~\ref{th3}, we obtain the following corollary.
\begin{corollary}\label{corGa}
  Let $f\in L_p(\R^d)\cap \ell_p(X_\s)$, $1\le p<\infty$, and $\a\in (0,1)$. Then the following properties are equivalent:
  \begin{enumerate}
    \item[$(i)$]  $\|f-S_{\s}^{\psi} f\|_{p}=\mathcal{O}(\s^{-\a})$,
    \item[$(ii)$]  $\|f_{1/2\s,\,r}-f\|_{\ell_p(X_\s)}+\w_1(f,\s^{-1})_{p}=\mathcal{O}(\s^{-\a})$.
  \end{enumerate}
\end{corollary}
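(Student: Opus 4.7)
The plan is to obtain Corollary~\ref{corGa} as a direct instance of Corollary~\ref{cor2} applied to $G_\s = S_\s^\psi$ with exponent $s=1$ and any $r\in\N$ (so that $s\le 2r$ is automatic). Since the text just before the corollary has already assembled all the ingredients, the argument reduces to checking off the hypotheses of Corollary~\ref{cor2} for the Gaussian sampling operator.

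First, I would verify the three structural assumptions. Conditions \eqref{c1} and \eqref{c1'} are immediate from Proposition~\ref{propA1A2}: the Gaussian $\psi(x)=e^{-\pi|x|^2}$ has rapid decay, so $\sum_k|\psi(\cdot+k)|\in L_p((0,1)^d)$ giving \eqref{jia1}, and its Fourier transform $\widehat\psi(\xi)=e^{-\pi|\xi|^2}$ is strictly positive, giving \eqref{jia2}. Condition \eqref{c2'} with $s=1$ is precisely estimate \eqref{ga1}, borrowed from \cite[Theorem~16]{KS20} for band-limited $g\in \mathcal{B}_p^{\l\s}(\R^d)$ with $\l\in(0,1/2)$. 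Thus the direct inequality \eqref{th1.1'} of Theorem~\ref{th1+} is available for $S_\s^\psi$.

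Next, I would check the subspace side needed for the inverse estimate in Theorem~\ref{th3}. For $f\in\ell_p(X_\s)$ we obviously have $S_\s^\psi f\in \Phi_{\s,p}^\psi$. The Bernstein inequality \eqref{berG} established in the paragraph preceding the corollary gives, in particular, $|g|_{W_p^1}\le C\s\|g\|_p$ for $g\in\Phi_{\s,p}^\psi$, which is \eqref{bs} with $s=1$, and also yields the mixed partial-derivative bound \eqref{bebe2}. Combined with the Jackson-type inequality $E(f,\Phi_{\s,p}^\psi)_p\le C\s^{-1}|f|_{W_p^1}$ (noted in the same paragraph via \eqref{ga1}), Lemma~\ref{leJB} is applicable to the family $(\Phi_{\s,p}^\psi)_{\s\ge 0}$, so \eqref{leJB.2} holds and all hypotheses of Theorem~\ref{th3} are in place.

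With both Theorems~\ref{th1+} and~\ref{th3} available for $S_\s^\psi$ at $s=1$, the equivalence of $(i)$ and $(ii)$ is exactly the statement of Corollary~\ref{cor2}. The only notational subtlety I anticipate is the appearance of $f_{1/(2\s),r}$ rather than $f_{\g/\s,r}$ in $(ii)$: for the uniform lattice $X_\s=\s^{-1}\Z^d$ the minimal separation forces $\g<1/2$, but by property $(d')$ for averaged moduli together with the comparison \eqref{st1}, the quantity $\|f_{\g/\s,r}-f\|_{\ell_p(X_\s)}$ is equivalent for all admissible $\g$ up to the endpoint $\g=1/2$, so one may write the corollary in the clean form stated. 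No step is genuinely hard here; the work was done in the preparatory discussion, and the proof is essentially a verification.
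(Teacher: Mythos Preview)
Your approach is exactly the one the paper takes: the sentence immediately preceding Corollary~\ref{corGa} says ``applying Theorems~\ref{th1+} and~\ref{th3}'', which is precisely the content of Corollary~\ref{cor2}, and the preceding paragraphs have already verified \eqref{c1}, \eqref{c1'}, \eqref{c2'} (with $s=1$) and the Bernstein inequality~\eqref{berG} for $\Phi_{\s,p}^\psi$, just as you outline. One small correction: your justification of the endpoint $\g=1/2$ via property~$(d')$ and~\eqref{st1} does not quite work, since those statements concern the $L_p$-norm and the $\tau$-modulus, not the discrete semi-norm $\|\cdot\|_{\ell_p(X_\s)}$; the cleaner observation is that for $X_\s=\s^{-1}\Z^d$ the open balls $B_{1/(2\s)}(\s^{-1}k)$ are pairwise disjoint, so the key step~\eqref{th1.5'} in the proof of Theorem~\ref{th1+} (the only place the separation is used) goes through directly with $\delta=1/(2\s)$.
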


Similar results can also be obtained for irregular sampling points. In what follows,  we restrict ourselves to the case of univariate interpolation operators in $L_2(\R)$. Let $X=(x_j)_{j\in \Z}\subset \R$, $x_j<x_{j+1}$,  be a Riesz basis sequence for $L_2[-\tfrac12,\tfrac12]$, i.e., there exists a constant $B>0$ such that
$$
\frac1B\bigg(\sum_{j\in \Z}|c_j|^2\bigg)^{1/2}\le \bigg\|\sum_{j\in \Z} c_j e^{2\pi ix_j(\cdot)}\bigg\|_{L_2[-\tfrac12,\tfrac12]}   \le B\bigg(\sum_{j\in \Z}|c_j|^2\bigg)^{1/2}
$$
for every sequence $(c_k)\in \ell_2$.  Recall that a necessary condition for $X$ to form a Riesz basis is that it be a quasi-uniform, meaning that there exist $0< q\le Q<\infty$ such that $q\le |x_{j+1}-x_j|\le Q$. A classical sufficient condition is a famous Kadec's 1/4-theorem (see~\cite{Ka64}), which states that if $X$ is such that $\sup_{j\in\Z}|x_j-j|<1/4$, then  $X$ is a Riesz basis sequence for $L_2[-\tfrac12,\tfrac12]$.

Let
$$
I^Xf(x)=\sum_{j\in \Z} a_j \psi(x-x_j), \quad a_j=a_j(f)\in \C,
$$
be an interpolation operator such that $I^Xf(x_k)=f(x_k)$ for each $k\in \Z$. It was proved in~\cite{Ha15} that for any $r\in \N$ and $f\in W_2^r(\R)$
\begin{equation}\label{Ha1}
  \|f-I^Xf\|_2\le C|f|_{W_2^r},
\end{equation}
where the constant $C$ does not depend on $f$.

We set
$$
I^X_\s f(x)=I^X f^{1/\s}(\s x),
$$
where $f^{1/\s}(x)=f(x/\s)$. Then, $I^X_\s f(\s^{-1}x_j)=f(\s^{-1}x_j)$, $j\in \Z$, and, by the standard substitutions, we obtain from~\eqref{Ha1} that
\begin{equation*}
  \|f-I_\s^Xf\|_2\le C\s^{-r}|f|_{W_2^r},\quad f\in W_2^r,
\end{equation*}
where the constant $C$ does not depend on $\s$ and $f$. This implies that  $I_\s^X$ satisfies assumption~\eqref{c2'}.

Now we consider the set
$$
\Phi_\s(X)=\bigg\{g\,:\,g(x)=\sum_{j\in \Z} c_j \psi(\s x-x_j),\quad (c_j)\in \ell_2\bigg\}.
$$
In what follows, we denote $X_\s=(\s^{-1}x_j)_{j\in \Z}$ and $X=X_1$. By Proposition~2 and Theorem~3 in~\cite{HL18}, we have that for all $g=\sum_{j\in \Z} c_j \psi(\cdot-x_j)\in \Phi_1(X)$ the following equivalence hold
\begin{equation}\label{Ha3}
  \|g\|_2\asymp \|g\|_{\ell_2(X)}\asymp \|(c_j)\|_{\ell_2},
\end{equation}
where $\asymp$ denotes two-sided inequalities with constants independent of $g$. The first equivalence in~\eqref{Ha3} together with the standard substitution gives
\begin{equation*}
  \| I_\s^X g\|_2\asymp \|f\|_{\ell_2(X_\s)}.
\end{equation*}
Thus, $I_\s^X $ satisfies assumptions~\eqref{c1} and~\eqref{c1'}.

For our purposes, we also need to establish a Bernstein-type inequality for elements of $\Phi_\s(X)$. Let $g$ be the same as in~\eqref{Ha3}. By~\cite[Lemma~8.1]{UZ24}, we have
\begin{equation}\label{HaB1}
  \|g^{(r)}\|_2\le C\|(c_j)\|_{\ell_2},\quad r\in \N,
\end{equation}
where the constant $C$ does not depend on $g$. At the same time by~\eqref{Ha3} we have
\begin{equation}\label{HaB2}
  \|(c_j)\|_{\ell_2}\le C\|g\|_2.
\end{equation}
Thus, replacing $g$ by $g(\s x)$, we derive from~\eqref{HaB1} and~\eqref{HaB2} that
\begin{equation*}
  \|g^{(r)}\|_2\le C\s^{r}\|g\|_2,\quad g \in  \Phi_{\s},
\end{equation*}
where the constant $C$ is independent of $\s$ and $g$.

Finally, applying Theorems~\ref{th1+} and~\ref{th3}, we get the following corollary.
\begin{corollary}\label{corHa}
  Let $f\in L_2(\R)\cap \ell_2(X_\s)$, $r\in \N$,  and $\a\in (0,r)$. Then the following properties are equivalent:
  \begin{enumerate}
    \item[$(i)$]  $\|f-I_{\s}^X f\|_{p}=\mathcal{O}(\s^{-\a})$,
    \item[$(ii)$]  $\|f_{1/2\s,\,r}-f\|_{\ell_2(X_\s)}+\w_r(f,\s^{-1})_{2}=\mathcal{O}(\s^{-\a})$.
  \end{enumerate}
\end{corollary}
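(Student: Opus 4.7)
The plan is to reduce the claim to a direct application of Corollary~\ref{cor2} with $p=2$ and the smoothness parameter taken equal to the averaging parameter $r$, since the paragraphs preceding the corollary have already established each of the hypotheses needed for Theorems~\ref{th1+} and~\ref{th3} to apply to $G_\s:=I_\s^X$.

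First I would collect the ingredients from the discussion above the corollary. The Riesz basis property of $X$ together with the subsequent rescaling gives the two-sided bound $\|I_\s^X g\|_2\asymp \|g\|_{\ell_2(X_\s)}$, which simultaneously yields~\eqref{c1} and~\eqref{c1'}. Rescaling~\eqref{Ha1} produces $\|g-I_\s^X g\|_2\le C\s^{-r}|g|_{W_2^r}$ for every $g\in W_2^r(\R)$, which in particular gives~\eqref{c2'} with $s=r$ and $\l=1$ after restriction to the band-limited class $\mathcal{B}_2^{\l\s}(\R)$. The definition of $I_\s^X$ places $I_\s^X f$ in $\Phi_\s(X)$, and $(\Phi_\s(X))_{\s\ge 0}$ plainly meets the subspace requirements recalled in Section~\ref{secAR}. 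With the choice $s=r$ the compatibility condition $s\le 2r$ is trivial.

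Next I would observe that the Jackson and Bernstein inequalities~\eqref{js} and~\eqref{bs} on $\Phi_\s(X)$ (with smoothness order $r$) are now available: the Jackson bound follows from the rescaled version of~\eqref{Ha1} together with $I_\s^X f\in \Phi_\s(X)$, while the Bernstein bound $\|g^{(r)}\|_2\le C\s^r\|g\|_2$ on $\Phi_\s(X)$ is precisely the display established immediately above via~\eqref{HaB1}, \eqref{HaB2}, and the scaling $g\mapsto g(\s\cdot)$. This grants all hypotheses of Theorem~\ref{th3}, hence those of Corollary~\ref{cor2}, whose conclusion is exactly the equivalence $(i)\Leftrightarrow(ii)$ with the parameters of the present corollary.

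The main technical obstacle has been cleared in the prior discussion rather than in the corollary itself: establishing the Bernstein inequality on $\Phi_\s(X)$ required the nontrivial derivative bound~\eqref{HaB1} from~\cite{UZ24} combined with the frame-type equivalences~\eqref{Ha3}, and the Riesz-basis hypothesis on $X$ is essential for~\eqref{c1'}. Once these ingredients are in hand, the remaining argument is a routine packaging of the general Theorems~\ref{th1+} and~\ref{th3} in the specific setting of irregular Gaussian interpolation, and no further obstacle arises.
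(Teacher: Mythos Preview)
Your proposal is correct and follows essentially the same route as the paper: the surrounding discussion has verified~\eqref{c1}, \eqref{c1'}, \eqref{c2'} (with $s=r$) and the Bernstein inequality on $\Phi_\s(X)$, and the paper then simply says ``applying Theorems~\ref{th1+} and~\ref{th3}'' to conclude, which is exactly what you do via Corollary~\ref{cor2}. Your additional remarks on the Jackson inequality and on where the nontrivial input (namely~\eqref{HaB1} and~\eqref{Ha3}) is used are accurate and do not deviate from the paper's intended argument.
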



\begin{thebibliography}{16}
{



\bibitem{BBSV06} C. Bardaro, P. L. Butzer, R. L. Stens,  G. Vinti, Approximation error of
the Whittaker cardinal series in terms of an averaged modulus of smoothness
covering discontinuous signals, J. Math. Anal. Appl. \textbf{316} (2006), no.~1, 269--306.


\bibitem{BMSVV14} C. Bardaro, I. Mantellini, R. Stens, J. Vautz, G. Vinti, Generalized sampling approximation for multivariate discontinuous signals and applications to image processing. Cham: Birkh\"auser/Springer. Applied and Numerical Harmonic Analysis, 87--114 (2014).


\bibitem{BeSh} C. Bennett, R. Sharpley, Interpolation of Operators,  Academic Press, New York, 1988.

\bibitem{B88} P. G. Binev, Error estimate for box spline interpolation.Constructive theory of functions (Varna, 1987), 50--55.
Publishing House of the Bulgarian Academy of Sciences, Sofia, 1988.


\bibitem{BXZ92} P. B. Borwein,   T. F. Xie,  S. P. Zhou,
On approximation by trigonometric Lagrange interpolating polynomials II,
Bull. Austral. Math. Soc. \textbf{45} (1992), no.~2, 215--221.

%

\bibitem{CZ99} C. Chui, L. Zhong, Polynomial interpolation and Marcinkiewicz-Zygmund inequalities on the unit circle,
J. Math. Anal. Appl. \textbf{233} (1999), no.~1, 387--405.

\bibitem{C23} D. Costarelli, Approximation error for neural network operators by an averaged modulus of smoothness, J. Approx. Theory, 294 (2023), 105944.

\bibitem{DL} {R. A. DeVore, G. G. Lorentz},  Constructive Approximation, Springer-Verlag, New York, 1993.


\bibitem{I86} K. G. Ivanov, On the behaviour of two moduli of functions. II, Serdica \textbf{12} (1986), 196--203.


\bibitem{Ha15} K. Hamm, Approximation rates for interpolation of Sobolev functions via Gaussians and allied functions, J. Approx. Theory, \textbf{189} (2015), 101--122.


\bibitem{HL18} K. Hamm, J. Ledford, On the structure and interpolation properties of quasi shift-invariant spaces, J. Funct. Anal. \textbf{274} (2018), no.~7, 1959--1992.

\bibitem{H89} V. H. Hristov, Best onesided approximation and mean approximation by interpolation polynomials
of periodic functions, Math. Balkanica, New Series \textbf{3}: Fasc. 3-4 (1989), 418--429.

\bibitem{HI90} V. H. Hristov, K. G. Ivanov, Realization of $K$-functionals on subsets and constrained approximation,
Math. Balkanica (New Series) \textbf{4}  (1990), no.~3,  236--257.


\bibitem{HL} Y. Hu, Y. Liu, On equivalence of moduli of smoothness of polynomials in $L_p, 0 < p \le \infty$, J.
Approx. Theory \textbf{136} (2005), no. 2, 182--197.



\bibitem{JM91} R. Q. Jia, C. A. Micchelli, Using the refinement equation for the construction of pre-wavelets II: power of two, In 'Curve and Surface' (P. J. Laurent, A. Le Mehaute and L. L. Schumaker eds.), Academic Press, New York 1991, pp. 209--246.


\bibitem{JS77} H. Johnen, K. Scherer, On the equivalence of the $K$-functional and moduli of continuity and some applications,
in: Constructive Theory of Functions of Several Variables (Proc. Conf. Math. Res. Inst., Oberwolfach 1976),
in: Lecture Notes in Math., vol. 571, Springer-Verlag, Berlin–Heidelberg, 1977, pp. 119--140.

\bibitem{Ka64} M. I. Kadec, The exact value of the Paley-Wiener constant, Dokl. Akad. Nauk SSSR \textbf{155} (1964) 1243--1254.


\bibitem{K12} Yu. S. Kolomoitsev, Approximation properties of generalized Bochner-Riesz means in the Hardy spaces $H_p,\ 0<p\leq1$. (Russian) Mat. Sb. \textbf{203} (2012), no. 8, 79--96; translation in Sb. Math. \textbf{203} (2012), no.~7-8, 1151--1168.


\bibitem{K17} Yu. Kolomoitsev, On moduli of smoothness and averaged differences of fractional order, Fract. Calc. Appl. Anal. \textbf{20} (2017), no.~4, 988--1009.


\bibitem{KL23}    Yu. Kolomoitsev, T. Lomako, Sharp $L_p$-error estimates for sampling operators,
J. Approx. Theory \textbf{294} (2023), 105941.


\bibitem{KP21}  Yu. Kolomoitsev, J. Prestin, Approximation properties of periodic multivariate quasi-interpolation operators, J. Approx. Theory \textbf{270} (2021), 105631.


\bibitem{KS20}  Yu. Kolomoitsev, M. Skopina,
Approximation by sampling-type operators in $L_p$-spaces, Math. Methods Appl. Sci. \textbf{43} (2020), no.~16, 9358--9374.


\bibitem{KS21}  Yu. Kolomoitsev, M. Skopina, Approximation by multivariate quasi-projection operators and Fourier multipliers, Appl. Math. Comput. \textbf{400} (2021), 125955.

\bibitem{KS21a}  Yu. Kolomoitsev, M. Skopina, Quasi-projection operators in weighted $L_p$ spaces,
Appl. Comput. Harmon. Anal. \textbf{52} (2021), 165--197.


\bibitem{KT20} Yu. S. Kolomoitsev, S. Yu. Tikhonov, Smoothness of functions versus smoothness of approximation processes, Bull. Math. Sci. \textbf{10} (2020), no.~3, 2030002.

\bibitem{KT20_2} Yu. Kolomoitsev, S. Tikhonov,  Properties of moduli of smoothness in $L_p(\R^d)$,
J. Approx. Theory \textbf{257} (2020), 105423.

\bibitem{KT21} Yu. Kolomoitsev, S. Tikhonov,  Hardy-Littlewood and Ulyanov inequalities, Mem. Amer. Math. Soc. \textbf{271}  (2021), no.~1325.




%


\bibitem{nikol-book} S. M. Nikol'skii, Approximation of functions of several variables and embedding
theorems, Springer, New York, 1975.









\bibitem{R94} R. Rathore, The problem of A. F. Timan on the precise order of decrease of the best approximations, J.~Approx. Theory \textbf{77} (1994), no.~2, 153--166.


\bibitem{RS10} K. Runovski,  H.-J. Schmeisser,
On families of linear polynomial operators generated by Riesz kernels,
Eurasian Math. J. \textbf{1} (4) (2010), 124--139.


\bibitem{SP} B. Sendov, V. A. Popov, The Averaged Moduli of Smoothness. Chichester: John Wiley \& Sons Ltd., 1988.

\bibitem{ST86} H.-J. Schmeisser, H. Triebel, Topics in Fourier Analysis and Function Spaces. New York: John Wiley \& Sons, 1987.


\bibitem{SS00} {H.-J. Schmeisser, W. Sickel}, Sampling theory and function spaces. In: Applied Mathematics Reviews, Vol. 1, 205--284, World Scientific, 2000.


\bibitem{Si92}
				{W. Sickel}, Characterization of Besov-Triebel-Lizorkin spaces via approximation by Whittaker's cardinal series and a related unconditional Schauder bases, Const. Approx. \textbf{8} (1992), 257--274.

\bibitem{Sk1}
         {M. Skopina},
        Band-limited scaling and wavelet expansions,
        {Appl. Comput. Harmon. Anal.} {\bf 36} (2014), 143--157.


\bibitem{timan} A. F. Timan, Theory of Approximation of Functions of a Real Variable, Pergamon Press, Oxford, London, New York, Paris, 1963.






\bibitem{TB}  R. M.~Trigub, E. S.~Belinsky, Fourier Analysis
and Approximation of Functions.  Kluwer-Springer (2004).




\bibitem{WS03} J. Wang, S. Zhou, Some converse results on one-sided approximation: justifications,
Anal. Theory Appl. \textbf{19} (2003), no.~3, 280--288.


\bibitem{UZ24} A. Ulanovskii, I. Zlotnikov,
Sampling  in quasi shift-invariant spaces and  Gabor frames generated by ratios of exponential polynomials, Math. Ann. \textbf{391} (2025), 3429--3456.



}

\end{thebibliography}
\end{document}